\documentclass[12pt]{amsart}

\usepackage{enumerate}
\usepackage[foot]{amsaddr}
\usepackage[english]{babel}
\usepackage{changepage}

\allowdisplaybreaks

\usepackage[
pdftex,hyperfootnotes]{hyperref}
\hypersetup{
	colorlinks=true,
	linkcolor=NavyBlue, 
	urlcolor=RoyalPurple,
	citecolor=OliveGreen,
	pdftitle={Bidiagonal factorization of Hessenberg or Banded matrices},
	bookmarks=true,
}
\usepackage[usenames,dvipsnames,svgnames,table,x11names]{xcolor}
\usepackage{pgfplots}
\usepackage{tikz}
\usepackage{tikz-3dplot}
\usetikzlibrary{automata,quotes, chains,matrix,calc,shadows,shapes.callouts,shapes.geometric,shapes.misc,positioning,patterns,decorations.shapes,
	decorations.pathmorphing,decorations.markings,decorations.fractals,decorations.pathreplacing,shadings,fadings,arrows.meta,bending}

\usepackage{nicematrix,drawmatrix}
\usepackage[utf8]{inputenc}

\usepackage{comment}

\usepackage[textwidth=17cm,textheight=23cm,
height=
24cm,
width=18cm]{geometry}

\usepackage{amssymb,latexsym,amsmath,amsthm,bm}
\usepackage{mathrsfs}
\usepackage{mathtools,mathdots}

\usepackage{tcolorbox}

\usepackage[table]{xcolor}

\usepackage{Baskervaldx}
\usepackage[]{newtxmath}

\usepackage{tensor}
\usepackage{bigints}

\theoremstyle{plain}

\newtheorem{teo}{Theorem}[section]
\newtheorem{coro}[teo]{Corollary}
\newtheorem{lemma}[teo]{Lemma}
\newtheorem{pro}[teo]{Proposition}
\newtheorem{defi}[teo]{Definition}
\newtheorem{rem}[teo]{Remark}

\renewcommand{\d}{\operatorname{d}}

\newcommand{\B}{\mathbb{B}}

\newcommand{\N}{\mathbb{N}}
\newcommand{\R}{\mathbb{R}}

\newcommand{\sz}[1]{\left| \vec{#1} \right|}

\allowdisplaybreaks[1]

\makeatletter
\DeclareRobustCommand{\gaussk}{\DOTSB\gaussk@\slimits@}
\newcommand{\gaussk@}{\mathop{\vphantom{\sum}\mathpalette\bigcal@{K}}}

\newcommand{\bigcal@}[2]{%
	\vcenter{\m@th
		\sbox\z@{$#1\sum$}%
		\dimen@=\dimexpr\ht\z@+\dp\z@
		\hbox{\resizebox{!}{0.8\dimen@}{$\mathcal{K}$}}%
	}%
}
\newcommand{\cfracplus}{\mathbin{\cfracplus@}}
\newcommand{\cfracplus@}{%
	\sbox\z@{$\dfrac{1}{1}$}%
	\sbox\tw@{$+$}%
	\raisebox{\dimexpr\dp\tw@-\dp\z@\relax}{$+$}%
}
\newcommand{\cfracdots}{\mathord{\cfracdots@}}
\newcommand{\cfracdots@}{%
	\sbox\z@{$\dfrac{1}{1}$}%
	\sbox\tw@{$+$}%
	\raisebox{\dimexpr\dp\tw@-\dp\z@\relax}{$\cdots$}%
}
\makeatother

\makeatletter
\newcommand*{\relrelbarsep}{.386ex}
\newcommand*{\relrelbar}{%
	\mathrel{%
		\mathpalette\@relrelbar\relrelbarsep
	}%
}
\newcommand*{\@relrelbar}[2]{%
	\raise#2\hbox to 0pt{$\m@th#1\relbar$\hss}%
	\lower#2\hbox{$\m@th#1\relbar$}%
}
\providecommand*{\rightrightarrowsfill@}{%
	\arrowfill@\relrelbar\relrelbar\rightrightarrows
}
\providecommand*{\leftleftarrowsfill@}{%
	\arrowfill@\leftleftarrows\relrelbar\relrelbar
}
\providecommand*{\xrightrightarrows}[2][]{%
	\ext@arrow 0359\rightrightarrowsfill@{#1}{#2}%
}
\providecommand*{\xleftleftarrows}[2][]{%
	\ext@arrow 3095\leftleftarrowsfill@{#1}{#2}%
}
\makeatother

\newcommand*\pFqskip{8mu}
\catcode`,\active
\newcommand*\pFq{\begingroup
	\catcode`\,\active
	\def ,{\mskip\pFqskip\relax}%
	\dopFq
}
\catcode`\,12
\def\dopFq#1#2#3#4#5{%
	 \tensor*[_{#1}]{F}{_{#2}} 
	 \left[\genfrac..{0pt}{}{#3}{#4};#5\right]%
	\endgroup
}

\usepackage{appendix}

\usepackage{xcolor} 

\usepackage[normalem]{ulem}
\usepackage{soul} 

\usepackage{verbatim}

\tikzstyle{block} = [draw, rectangle, 
minimum height=3em, minimum width=2em]

\begin{document}
	\title[Bidiagonal Factorization of Banded Matrices]
	{Bidiagonal Factorization of Banded Recursion Matrices for Mixed-Type Multiple Orthogonal POlynomials}
	
	\author[A Branquinho]{Amílcar Branquinho\(^{1}\)}
	\address{\(^1\)CMUC, Departamento de Matemática,
		Universidade de Coimbra, 3001-454 Coimbra, Portugal}
	\email{\(^1\)ajplb@mat.uc.pt}
	

	\author[A Foulquié]{Ana Foulquié-Moreno\(^{3}\)}
	\address{\(^3\)CIDMA, Departamento de Matemática, Universidade de Aveiro, 3810-193 Aveiro, Portugal}
	\email{\(^3\)foulquie@ua.pt}

	\author[M Mañas]{Manuel Mañas\(^{4}\)}
	\address{\(^4\)Departamento de Física Teórica, Universidad Complutense de Madrid, Plaza Ciencias 1, 28040-Madrid, Spain 
	}
	\email{\(^4\)manuel.manas@ucm.es}
	
	\keywords{Multiple orthogonal polynomials of mixed type,Gauss--Borel factorization, bidiagonal factorization of banded recursion matrices, Christoffel transformations, Christoffel formulas, multiple Hahn polyomials}
	
	\subjclass{}
	\maketitle
\begin{abstract}
Given a banded matrix $\mathscr{T}_N$ with $p$ subdiagonals and $q$
superdiagonals arising from the Gauss--Borel factorization
$\mathscr{M}_N = \mathscr{L}_N^{-1}\mathscr{U}_N^{-1}$ of a moment
matrix, this paper constructs explicitly its bidiagonal factorization
\[
\mathscr{T}_N = L_1 \cdots L_p\, U_q \cdots U_1.
\]
Bidiagonal factorizations of this type are central to the study of
oscillatory banded matrices and to the spectral Favard theorem for
multiple orthogonal polynomials \cite{aim}.

The factorization is obtained via Christoffel transformations of the
moment matrix. Provided that the perturbed moment matrices
$\mathscr{M}_{N,(b,0)}$ and $\mathscr{M}_{N,(0,a)}$ admit a
Gauss--Borel factorization, each bidiagonal factor is a quotient of
the corresponding Gauss--Borel factors:
\[
U_b = \mathscr{U}_{N,(b,0)}^{-1}\mathscr{U}_{N,(b-1,0)},
\qquad
L_a = \mathscr{L}_{N,(0,a-1)}\mathscr{L}_{N,(0,a)}^{-1}.
\]
Explicit Christoffel-type formulas for the entries of the bidiagonal
factors are then derived in terms of certain tau-determinants evaluated at
the origin:
\[
U_{b,n} = -\frac{\tau^B_{b-1,n}\,\tau^B_{b,n+1}}
{\tau^B_{b-1,n+1}\,\tau^B_{b,n}},
\qquad
L_{a,n+1} = -\frac{\tau^A_{a-1,n+2}\,\tau^A_{a,n}}
{\tau^A_{a-1,n+1}\,\tau^A_{a,n+1}}.
\]
As an illustration, the theory is applied to the recurrence matrices
of multiple Hahn orthogonal polynomials. For two weights the
tetradiagonal case is handled via contiguous hypergeometric relations
\cite{LAA,JMAA}; for three weights, i.e. the pentadiagonal case, the direct hypergeometric
representations of \cite{BDFMW} are required. In both cases fully
explicit bidiagonal factorizations are obtained.
\end{abstract}

\section{Introduction}

The bidiagonal factorization of banded matrices plays a central role
in two related but distinct directions. The nonneg\-ative bidiagonal
factorization is a fundamental tool in the structural theory of total
positivity of matrices, providing an explicit and computable
description of totally nonneg\-ative banded matrices in terms of
elementary factors \cite{fallat-johnson,pinkus}. The positive
bidiagonal factorization, in turn, leads to a spectral Favard theorem
for banded matrices in terms of mixed-type multiple orthogonal
polynomials \cite{aim}, and has been applied to construct explicit
stochastic factorizations of finite-time Markov chains
\cite{finite-Markov,Markov JP}. Constructing bidiagonal
factorizations explicitly is therefore of direct relevance both for
the theory of totally positive matrices and for the spectral analysis
of banded operators.

The setting of this paper is the following. A rectangular $q\times p$
matrix of measures $\mathrm{d}\mu$ determines a moment matrix
$\mathscr{M}_N$ whose Gauss--Borel factorization
\[
\mathscr{M}_N = \mathscr{L}_N^{-1}\mathscr{U}_N^{-1}
\]
produces two dual families of mixed-type multiple orthogonal
polynomials $B^{[N]}$ and $A^{[N]}$, together with a banded
recurrence matrix $\mathscr{T}_N$ with $p$ subdiagonals and $q$
superdiagonals. This framework was introduced in \cite{cum1,cum2} and
further developed in \cite{Manuel-Miguel,Manuel-Miguel_2}.

The first main result (Theorem~\ref{th:Bidiagonal_Factorization})
gives the bidiagonal factorization $\mathscr{T}_N = L_1\cdots
L_p\,U_q\cdots U_1$, under the condition that the Christoffel
perturbations $\mathscr{M}_{N,(b,0)}$ and $\mathscr{M}_{N,(0,a)}$
of the moment matrix admit a Gauss--Borel factorization. The
bidiagonal factors are then quotients of the corresponding
Gauss--Borel factors:
\[
U_b = \mathscr{U}_{N,(b,0)}^{-1}\mathscr{U}_{N,(b-1,0)},
\qquad
L_a = \mathscr{L}_{N,(0,a-1)}\mathscr{L}_{N,(0,a)}^{-1}.
\]
This establishes a precise correspondence: the bidiagonal
factorization of the recurrence matrix reflects exactly the
Gauss--Borel structure of the Christoffel-transformed moment
matrices.

The second main result (Theorem~\ref{th:bidiagonal_Christoffel})
makes this fully explicit. The entries of the bidiagonal factors are
expressed as cross-ratios of tau-determinants:
\begin{equation}\label{eq:intro-formulas}
	U_{b,n} = -\frac{\tau^B_{b-1,n}\,\tau^B_{b,n+1}}
	{\tau^B_{b-1,n+1}\,\tau^B_{b,n}},
	\qquad
	L_{a,n+1} = -\frac{\tau^A_{a-1,n+2}\,\tau^A_{a,n}}
	{\tau^A_{a-1,n+1}\,\tau^A_{a,n+1}},
\end{equation}
where $\tau^B_{b,n}$ and $\tau^A_{a,n}$ are $b\times b$ and $a\times
a$ determinants of values of the mixed-type polynomials at $x=0$.
The cross-ratio structure in \eqref{eq:intro-formulas} is the matrix
counterpart of the classical Christoffel formula for scalar
orthogonal polynomials \cite{christoffel}, and the condition for the
factorization to exist --- the nonvanishing of the tau-determinants
--- coincides with the condition for the transformed orthogonality
to be well-defined \cite{Manuel-Miguel}.

Section~6 subjects these formulas to a substantial computational
test on multiple Hahn orthogonal polynomials. For two weights the
recurrence matrix is tetradiagonal, and the tau-determinants can be
evaluated using contiguous hypergeometric relations from
\cite{LAA,JMAA}; the bidiagonal factorization of tetradiagonal
matrices in this context was studied in \cite{darboux}. For three
weights this approach is no longer viable; the direct hypergeometric
representations of \cite{BDFMW} are required, and a more involved
computation delivers the explicit factorization. The hypergeometric
functions appearing throughout follow the standard notation of
\cite{slater}, and analogous representations for other classical
families are available in \cite{PAMS,{JP_friends}}.

The paper is organized as follows. Section~2 introduces moment
matrices, their Gauss--Borel factorization, and the mixed-type
multiple orthogonal polynomials. Section~3 defines the banded
recurrence matrices. Section~4 constructs the bidiagonal
factorization via Christoffel transformations. Section~5 derives the
explicit Christoffel formulas for the bidiagonal factors. Section~6
applies the theory to multiple Hahn polynomials with two and three
weights.
	\subsection{Moment matrices}
	We will study orthogonality with respect a rectangular matrix of measures, namely
\begin{defi}[The matrix of measures]
		Given two natural  numbers $p,q\in\N$, we consider a  rectangular $q\times p $ matrix of measures
	\[
	\mu=\begin{bNiceArray}{ccc}[first-row,last-col]
		\Hbrace{3}{p}\\
		\mu_{1,1}&\Cdots&\mu_{1,p}&\Vbrace{6}{q}\\
		\Vdots& &\Vdots\\\\\\\\
			\mu_{q,1}&\Cdots&\mu_{q,p}
	\end{bNiceArray}\hspace*{20pt}.
	\]
\end{defi}
First, we need to introduce moment matrices of finite size for this matrix of measures and characterize its symmetries.
We collect all the monomials in the following matrices 
\begin{defi}
	We consider the following semi-infinite matrices on monomials
	\[
X_{[r]}=\begin{bNiceArray}{c}[first-row]
	\Hbrace{1}{r}\\
	I_r\\
	xI_r\\
	\Vdots[shorten-end=2.5pt]\\\\
\end{bNiceArray}\in \R^{\infty\times r}[x].
\]
as well as its $N$ truncations, $N\in\N$, $X^{[N]}_{[r]}$. 
\end{defi}

 For $r\in\N$ we consider the Euclidean division 
 $N=N_rr+s_r$, with $s_r\in\{0,1,\dots, r-1\}$,  $N_r=\left\lfloor\frac{N}{r}\right\rfloor$ and
\[
I_{s,r}\coloneq\begin{bNiceArray}{ccccccc}[last-col,first-row,last-row]
	\Hbrace{7}{r}\\
	1&0&\Cdots&&&&0&\Vbrace{4}{s}\\
	0&1&0&\Cdots&&&0\\
	\Vdots &\Ddots&\Ddots&\Ddots&&&\Vdots\\
	0&\Cdots&0& 1&0&\Cdots&0\\
	\Hbrace{4}{s}&\Hbrace{3}{r-s}
\end{bNiceArray}\hspace*{15pt}.
\]
\begin{pro}
	The truncated matrix of monomials  are given by
	\[
	X^{[N]}_{[r]}=\begin{bNiceArray}{c}[first-row,last-col]
		\Hbrace{1}{r}\\
		I_r&\Vbrace{4}{N_r r}\\
		xI_r\\
		\Vdots\\
		x^{N_r-1} I_r\\
	x^{N_r}I_{s_r,r}	 &\Vbrace{1}{s_r}
	\end{bNiceArray}\hspace*{20pt}\in \R^{N\times r}[x].
	\]
\end{pro}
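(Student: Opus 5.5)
This is essentially a bookkeeping statement: the truncation $X^{[N]}_{[r]}$ means keeping the first $N$ rows of the semi-infinite matrix $X_{[r]}$. The proof amounts to identifying which rows survive. I will index the rows of $X_{[r]}$ by $k\in\{0,1,2,\dots\}$. The matrix $X_{[r]}$ consists of the blocks $x^jI_r$, $j=0,1,\dots$, stacked vertically. So row $k$ lies in block $j$ and is the $i$-th row of that block, where $k=jr+i$ with $0\le i\le r-1$. That row equals $x^j e_{i}^{\top}$, with $e_i$ the $(i+1)$-th canonical vector of $\R^r$. This is just the Euclidean division of $k$ by $r$.

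Next I restrict to $0\le k\le N-1$ and write $N=N_rr+s_r$. The rows with $k<N_rr$ fill exactly the complete blocks $j=0,\dots,N_r-1$. These give the stacked identities $I_r, xI_r,\dots,x^{N_r-1}I_r$, which account for $N_rr$ rows.

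The remaining rows are those with $N_rr\le k\le N-1$. These are $s_r$ rows, all in block $j=N_r$, namely $x^{N_r}e_i^{\top}$ for $i=0,\dots,s_r-1$. Stacked together they are the $s_r\times r$ matrix whose first $s_r$ columns form $I_{s_r}$ and whose last $r-s_r$ columns vanish, multiplied by $x^{N_r}$. By definition this is $x^{N_r}I_{s_r,r}$.

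Concatenating the two groups gives the displayed block form, and the row count $N_rr+s_r=N$ confirms the size $N\times r$. Two degenerate cases need a remark. When $s_r=0$ the last block is empty, since $I_{0,r}$ has no rows. When $N<r$ we have $N_r=0$, so only the partial block $I_{s_r,r}=I_{N,r}$ appears. There is no real obstacle. The only care needed is keeping the convention that $I_{s,r}$ denotes the first $s$ rows of $I_r$ consistent with how the degenerate cases are read.
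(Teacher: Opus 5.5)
Your proof is correct. You split the first $N$ rows of $X_{[r]}$ via the Euclidean division $N=N_rr+s_r$ into the $N_r$ complete blocks $x^jI_r$ and the first $s_r$ rows of $x^{N_r}I_r$, which by the paper's definition is exactly $x^{N_r}I_{s_r,r}$; this is the bookkeeping the paper leaves implicit, since it states the proposition without proof as immediate from the definition of truncation, and your treatment of the degenerate cases $s_r=0$ and $N<r$ is a harmless extra.
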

	These matrices  of monomials have important properties with respect the following matrices:
\begin{defi}[Shift matrices]
		The  shift matrix is
	\[
	\Lambda=\begin{bNiceArray}{ccccc}
		0&1&0&\Cdots[shorten-end=-5pt]&\phantom{h}\\
		0&0&1&\Ddots&\\
		\Vdots[shorten-end=-10pt]&\Ddots[shorten-end=-25pt]&\Ddots[shorten-end=-20pt]&\Ddots[shorten-end=-0pt]&\\
		\phantom{h}&\phantom{h}&\phantom{h}&\phantom{h}
		\end{bNiceArray}, 
	\]
	in terms of which we have the corresponding block shift matrix
		\[
	\Lambda_{[r]}=\begin{bNiceArray}{ccccc}
		0_{r}&I_{r}&0_r&\Cdots[shorten-end=-5pt]&\phantom{h}\\
		0_r&0_r&I_r&\Ddots&\\
		\Vdots[shorten-end=-10pt]&\Ddots[shorten-end=-15pt]&\Ddots[shorten-end=-15pt]&\Ddots[shorten-end=-0pt]&\\
		\phantom{h}&\phantom{h}&\phantom{h}&\phantom{h}
	\end{bNiceArray}=\Lambda^r.
	\]
\end{defi}
Our focus will be in truncated versions of these matrices as follows:

\begin{pro}[Truncated block shift matrices]
		Truncations $\Lambda^{[N]}_{[r]}\in\R^{N\times N}$   of this shift matrix are 	
	\begin{align*}
		\Lambda^{[N]}_{[r]}&=\begin{bNiceArray}{ccccc|c}[first-row, last-col,margin,last-row]
			\Hbrace{6}{N}\\
			0_r & I_r&0_r&\Cdots &0_r&0_{r\times s_r}&\Vbrace{6}{N}\\
			\Vdots &\Ddots&\Ddots&\Ddots &\Vdots&\Vdots\\
			&&& &0_r&0_{r\times s_r}\\
			&&&&I_r&0_{r\times s_r}\\
					&&&&0_r&I_{s_r,r}^\top\\
			0_{s_r\times r}&\Cdots &&&0_{s_r\times r}&0_{s_r}\\
			\Hbrace{5}{N_r r}&\Hbrace{1}{s_r}
		\end{bNiceArray}
		\end{align*}
\end{pro}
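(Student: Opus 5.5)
The approach is to compute the truncation entrywise. By definition $\Lambda^{[N]}_{[r]}$ is the leading $N\times N$ principal submatrix of $\Lambda_{[r]}=\Lambda^r$. Since $(\Lambda)_{i,j}=\delta_{j,i+1}$, we have $(\Lambda^r)_{i,j}=\delta_{j,i+r}$ for indices $i,j\in\{0,1,\dots\}$. Hence
\[
\bigl(\Lambda^{[N]}_{[r]}\bigr)_{i,j}=\delta_{j,i+r},\qquad 0\le i,j\le N-1 .
\]
So the only task is to read off where these ones land once we use the Euclidean division $N=N_r r+s_r$ to partition rows and columns. The partition has $N_r$ blocks of size $r$ followed by one block of size $s_r$. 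This is the same partition used for $X^{[N]}_{[r]}$ in the previous proposition.

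For rows with $0\le i\le N-r-1$, the unique one sits in column $i+r\le N-1$. For rows with $i\ge N-r$, the target column $i+r$ exceeds $N-1$, so the row is zero. I will then translate this into blocks.

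\begin{itemize}
\item For row blocks $k=0,\dots,N_r-2$ (rows $kr,\dots,kr+r-1$), every row lies below $N-r$. The ones therefore form an identity $I_r$ in column block $k+1$. This gives the superdiagonal chain of $I_r$'s, ending with $I_r$ in the last full column block, as in the displayed matrix.
\item For the last full row block $k=N_r-1$ (rows $(N_r-1)r,\dots,N_rr-1$), row $(N_r-1)r+t$ has its one in column $N_rr+t$. This column exists only when $t<s_r$. Restricted to the last $s_r$ columns, the block is therefore the $r\times s_r$ matrix with entries $\delta_{t,u}$, which is $I_{s_r,r}^\top$. It also has zeros in all previous column blocks.
\item The final $s_r$ rows ($i\ge N_rr\ge N-r$) are zero, giving the bottom row $0_{s_r\times r},\dots,0_{s_r}$.
\end{itemize}

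The main point needing care is the bookkeeping of the last full block row. One must check that the cutoff $i\le N-r-1$ matches $t<s_r$ exactly, which it does since $N-r-1-(N_r-1)r=s_r-1$. One must also handle the degenerate cases. When $s_r=0$, the last column block is empty and $I_{0,r}^\top$ disappears, consistent with the formula. When $N<r$, so that $N_r=0$, the matrix is identically zero. Everything else is immediate from $(\Lambda^r)_{i,j}=\delta_{j,i+r}$.
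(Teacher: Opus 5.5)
Your proposal is correct. The paper states this proposition without proof, treating it as an immediate reading of the leading $N\times N$ block of $\Lambda^r$, and your entrywise computation from $(\Lambda^r)_{i,j}=\delta_{j,i+r}$ carries out exactly that reading. That includes the placement of $I_{s_r,r}^\top$ in the last full block row and the degenerate cases $s_r=0$ and $N<r$.
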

We will also need to border these truncated shift matrices
\begin{defi}[Bordered shift matrices]
	Bordered truncated matrices  $\Lambda^{[N,N+r]}_{[r]}\in\R^{N\times (N+r)}$ are obtained by augmenting with $r$ columns as follows
	\begin{align*}
	\Lambda^{[N,N+r]}_{[r]}&=\left[\begin{NiceArray}{ccccc|cc}
		0_r & I_r&0_r&\Cdots &0_r&0_{r\times s_r}&\Block[borders={left}]{4-1}{0_{(N-r)\times r}}\\
		\Vdots &\Ddots&\Ddots&\Ddots &\Vdots&\Vdots\\
		&&& &0_r&0_{r\times s_r}&\\
		&&&&I_r&0_{r\times s_r}&\\
		&&&&0_r&I_{s_r,r}^\top&\Block[borders={top,left}]{2-1}{I_r}\\ 
		0_{s_r\times r}&\Cdots &&&0_{s_r\times r}&0_{s_r}&
	\end{NiceArray}\right].
\end{align*}
We have the key spectral type relations
\begin{equation}\label{eq:spectralX}
			\Lambda^{[N,N+r]}_{[r]}	X^{[N+r]}_{[r]}(x)=x X^{[N]}_{[r]}(x).
\end{equation}
\end{defi}
We first give an slightly different view of these bordered truncated shift  matrices
\begin{pro}
		The bordered truncated shift matrices  $\Lambda^{[N,N+r]}_{[r]}\in\R^{N\times (N+r)}$ have the following expression
	\begin{align*}
		\Lambda^{[N,N+r]}_{[r]}&=\left[\begin{NiceArray}{ccccccw{c}{3cm}w{c}{3cm}}
			0_r & I_r&0_r&\Cdots &0_r&\Block[borders={left}]{4-2}<\Large>{0_{(N-r-s_r)\times (r+s_r)}}&\\
			\Vdots &\Ddots&\Ddots&\Ddots &\Vdots&&\\
			&&& &0_r&&\\
			&&&&I_r&&\\
			&&&&0_r&\Block[borders={top,left}]{2-2}<\large>{I_{r+s_r}}&\\ 
			0_{s_r\times r}&\Cdots &&&0_{s_r\times r}&&
		\end{NiceArray}\right].
	\end{align*}
\end{pro}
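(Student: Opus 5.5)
The plan is to compare the two block descriptions entry by entry: the defining one in the Definition of bordered shift matrices, and the new one in the statement. Both are $N\times(N+r)$ matrices. It is cleanest to regard $\Lambda^{[N,N+r]}_{[r]}$ as the $N\times(N+r)$ submatrix of the semi-infinite $\Lambda^r$ made of its first $N$ rows and first $N+r$ columns. Its entries are then simply $(\Lambda^{[N,N+r]}_{[r]})_{i,j}=\delta_{j,i+r}$ for $1\le i\le N$ and $1\le j\le N+r$. First I will check that the defining block matrix has exactly this entry description. Then I will do the same for the matrix in the statement, which gives the equality.

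For the defining form, the columns split into the first $N_r r$ block columns, then $s_r$ columns, then $r$ bordering columns. The rows split into $N_r r$ rows followed by $s_r$ rows.

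1. The upper-left part is the truncation $\Lambda^{[N]}_{[r]}$. In its first $N_r r$ columns it gives the ones at positions $(i,i+r)$ with $i+r\le N_r r$.
2. The block $I_{s_r,r}^\top$ sits in rows $(N_r-1)r+1,\dots,N_r r$ and columns $N_r r+1,\dots,N$. It contributes the ones $(i,i+r)$ for $i=(N_r-1)r+1,\dots,(N_r-1)r+s_r$, which are exactly the pairs with $i+r\in\{N_r r+1,\dots,N\}$.
3. The bordering block $I_r$ occupies the last $r$ rows, namely rows $N-r+1,\dots,N$, and the last $r$ columns, namely $N+1,\dots,N+r$. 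This gives the ones $(i,i+r)$ with $N<i+r\le N+r$.
4. All other blocks are zero.

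Together these give precisely $\delta_{j,i+r}$, which also re-derives \eqref{eq:spectralX}.

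For the expression in the statement, split the columns into the first $N-r-s_r$ columns and the last $r+s_r$ columns. Split the rows into the first $N-r-s_r$ rows and the last $r+s_r$ rows.

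1. The left block column is the shift pattern $0_r,I_r,\dots$ ending with $I_r$ in row block $N-r-s_r-r+1,\dots,N-r-s_r$. It places ones at $(i,i+r)$ for $i+r\le N-r-s_r$, that is, for $i\le N-2r-s_r$. Below that $I_r$ come $0_r$ and $0_{s_r\times r}$, which together fill the remaining $r+s_r$ rows.
2. The lower-right $I_{r+s_r}$ lives in rows $N-r-s_r+1,\dots,N$ and columns $N+1-s_r,\dots,N+r$. Its diagonal is $(i,i+r)$, so it covers $N-r-s_r<i\le N$.
3. The upper-right zero block covers rows up to $N-r-s_r$ in the last $r+s_r$ columns.

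So the expression in the statement also gives $\delta_{j,i+r}$, except that I still have to account for rows $i$ with $N-2r-s_r<i\le N-r-s_r$. Their ones sit at columns $i+r\in(N-r-s_r,N]$, which lie in the left block column. These must come from the displayed final $I_r$ together with the block-pattern's last superdiagonal. The main obstacle is exactly this bookkeeping: the dimensions must be consistent, with the row blocks summing to $N$ and the column blocks summing to $N+r$. I would carefully recount the block sizes, $N_r r-r$ shifted rows plus $r+s_r$, against the left width $N-s_r$. I would allow the figure's labels to be read with the left block column of width $N-s_r$, so that the lower-right identity sits in rows $N-r-s_r+1,\dots,N$ and columns $N-s_r+1,\dots,N+r$. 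With that reading, all positions match $\delta_{j,i+r}$, and the two forms coincide.
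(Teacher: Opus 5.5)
Your strategy of matching both block forms with the entry pattern $(\Lambda^{[N,N+r]}_{[r]})_{i,j}=\delta_{j,i+r}$ is the right one. The paper gives no separate argument and treats the proposition as a rereading of the Definition. Your check of the defining form is correct.

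The check of the new form, however, starts from a wrong column split. Removing the last $r+s_r$ columns leaves $N+r-(r+s_r)=N-s_r=N_r r$ columns, not $N-r-s_r$; your two pieces add up to $N$ columns instead of $N+r$. This miscount produces the incorrect claim in your step~1 that the left block column yields ones only for $i\le N-2r-s_r$. It also produces the spurious rows $N-2r-s_r<i\le N-r-s_r$ that you then say remain to be accounted for. For those rows, incidentally, $i+r$ ranges over $(N-r-s_r,N-s_r]$, not $(N-r-s_r,N]$. Your last paragraph does not actually carry out the recount, and it presents the repair as a reinterpretation of the figure. But the labels $0_{(N-r-s_r)\times(r+s_r)}$ and $I_{r+s_r}$ already fix the width of the right block column at $r+s_r$. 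Nothing in the statement needs to be reread; the inconsistency was in your split.

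With the correct split the verification closes immediately:
\begin{itemize}
\item The rows divide as $(N_r-1)r+(r+s_r)$, and the left columns form exactly $N_r$ blocks of size $r$.
\item The upper-left part is therefore the array of $N_r-1$ block rows and $N_r$ block columns with $I_r$ in block positions $(k,k+1)$, $k=1,\dots,N_r-1$.
\item The displayed final $I_r$ (row block $N_r-1$, column block $N_r$) is precisely what gives the ones $(i,i+r)$ for $N-2r-s_r<i\le N-r-s_r$. So the left part produces $\delta_{j,i+r}$ for every $i\le N-r-s_r$.
\item For $i>N-r-s_r$ one has $i+r>N-s_r$, consistent with the zero blocks $0_r$ and $0_{s_r\times r}$.
\item Your step~2, which already uses the correct columns $N-s_r+1,\dots,N+r$, then covers the remaining rows.
\end{itemize}
Alternatively, at the block level, the new form comes from the defining one by merging the $s_r$ columns holding $I_{s_r,r}^\top$ with the $r$ border columns. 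Since $N-r=(N_r-1)r+s_r$, the ones of $I_{s_r,r}^\top$ and of the border $I_r$ lie on consecutive diagonal positions of the last $r+s_r$ rows. Together they form $I_{r+s_r}$, with zeros above.
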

Then, we are ready to state the simple but key spectral property of these shift matrices with respect the matrix of monomials:
\begin{pro}[Monomial spectrality]
	We have the  spectral type relations:
	\begin{equation}\label{eq:spectralX}
		\Lambda^{[N,N+r]}_{[r]}	X^{[N+r]}_{[r]}(x)=x X^{[N]}_{[r]}(x).
	\end{equation}
\end{pro}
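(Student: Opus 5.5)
The plan is to compare both sides of \eqref{eq:spectralX} block row by block row, using the block description of $\Lambda^{[N,N+r]}_{[r]}$ from the preceding proposation. Write $X^{[N+r]}_{[r]}$ in block form. Since $N+r=(N_r+1)r+s_r$, its rows are the $r\times r$ blocks $x^kI_r$ for $k=0,\dots,N_r$, followed by the last $s_r$ rows $x^{N_r+1}I_{s_r,r}$. Its first $N_r r$ rows are the blocks $x^kI_r$ for $k=0,\dots,N_r-1$. The remaining $r+s_r$ rows form the stacked matrix
\[
\begin{bmatrix} x^{N_r} I_r\\ x^{N_r+1} I_{s_r,r}\end{bmatrix}.
\]

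Next I split the product according to the row blocks of $\Lambda^{[N,N+r]}_{[r]}$ given in the last proposition. The first $N-r-s_r=(N_r-1)r$ rows form the block superdiagonal shift acting on the first $N_r r$ rows of $X^{[N+r]}_{[r]}$. Block row $k$ selects the block $x^{k+1}I_r$, for $k=0,\dots,N_r-2$. This gives $x\cdot x^kI_r$, which matches the corresponding block rows of $xX^{[N]}_{[r]}$.

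The last $r+s_r$ rows of $\Lambda^{[N,N+r]}_{[r]}$ form the identity $I_{r+s_r}$, placed in the last $r+s_r$ columns. They therefore pick out exactly the bottom $r+s_r$ rows of $X^{[N+r]}_{[r]}$, namely $x^{N_r}I_r$ stacked over $x^{N_r+1}I_{s_r,r}$. On the other side, the last $r+s_r$ rows of $xX^{[N]}_{[r]}$ are $x\cdot x^{N_r-1}I_r=x^{N_r}I_r$ stacked over $x\cdot x^{N_r}I_{s_r,r}$, which agree.

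The only point needing care is the index bookkeeping: checking that the zero blocks in the first $(N_r-1)r$ rows kill the last $r+s_r$ entries, and that the column count $N+r$ splits as $N_r r+(r+s_r)$. I would also treat the edge case $N_r=0$ (that is, $N<r$) separately. There the shift part is empty, and the bordered matrix reduces to $[\,0_{N\times r}\;\; I_{N}\,]$, read through the block description. The identity then just says that rows $r+1,\dots,r+N$ of $X^{[N+r]}_{[r]}$ equal $x$ times the first $N$ rows of $I_r$. This holds because those rows are $x\,I_{s_r,r}$ with $s_r=N$. I expect this degenerate case to be the main obstacle, since the pictorial block notation obscures it.
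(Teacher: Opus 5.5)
Your proposal is correct and matches the paper's approach. The paper gives no separate proof: it states the identity right after the block form of $\Lambda^{[N,N+r]}_{[r]}$, with the trailing $I_{r+s_r}$, and your block-row comparison against that form is the verification it leaves implicit. The case $N_r=0$ needs no special treatment, since for every $N$ one has $\Lambda^{[N,N+r]}_{[r]}=[\,0_{N\times r}\;\; I_N\,]$ (its only nonzero entries are the $1$'s in positions $(i,i+r)$), and row $i+r$ of $X_{[r]}$ is $x$ times row $i$, which gives the identity in one line without any case split.
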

	Now, we are prepared to consider matrices of moments of the matrix of measures $\d\mu$:
\begin{defi}[Moment matrices]
		For $N,M\in\N$, we have the following matrices of moments of $\mu$:
	\[
\begin{aligned}
		\mathscr M^{[N,M]}&\coloneq\int X^{[N]}_{[q]}(x)\d\mu(x) \left(X^{[M]}_{[p]}(x)\right)^\top\\&=
		\bigints \begin{bNiceMatrix}
			I_q\\
			xI_q\\
			\Vdots\\
			x^{N_q-1} I_q\\
			x^{N_q}I_{s_q,q}
		\end{bNiceMatrix}\d\mu(x)\begin{bNiceMatrix}
		I_p&	xI_p&	\Cdots &		x^{M_p-1} I_p & x^{M_p}I_{s_p,p}^\top
		\end{bNiceMatrix}\\&=
		\begin{bNiceArray}{ccccc}[first-row,last-col,small]
			\Hbrace{5}{M}\\
			\int \d\mu(x) &\int x \d\mu(x) &\Cdots & \int x^{M_p-1}\d\mu(x)
			& \int x^{M_p}\d\mu(x)I_{s_p,q}^\top&\Vbrace{7}{N}\\
				\int x\d\mu(x) &\int x^2 \d\mu (x)&\Cdots & \int x^{M_p}\d\mu(x)	& \int x^{M_p+1}\d\mu(x)I_{s_p,p}^\top\\
			\Vdots & \Vdots&&\Vdots&\Vdots\\\\\\
				 \int x^{N_q-1}\d\mu(x)& \int x^{N_q}\d\mu(x)&\Cdots &  \int x^{N_q+M_p-2}\d\mu(x)&  \int x^{N_q+M_p-1} \d\mu(x)I_{s_p,p}^\top\\
				 \int x^{N_q}I_{s_q,q}\d\mu(x)& \int x^{N_q+1}I_{s_q,q}\d\mu(x)&\Cdots &  \int I_{s_q,q}x^{N_q+M_p-1}\d\mu(x)&  \int x^{N_q+M_p} I_{s_q,q}\d\mu(x)I_{s_p,p}^\top
		\end{bNiceArray}\hspace*{20pt}.
\end{aligned}
	\]
\end{defi}
Using the spectral properties of the matrix of monomials we  find the following block Hankel  type symmetries of these matrices of moments:
\begin{pro}[Hankel type symmetries]
	We have the following symmetries for the  moment matrices
	\[
	\Lambda^{[N,N+q]}_{[q]}\mathscr M^{[N+q,M]}=\mathscr M^{[N,M+p]} 	\left(\Lambda^{[M,M+p]}_{[p]}\right)^\top.
	\]
\end{pro}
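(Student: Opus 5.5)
The plan is to push each shift matrix inside the moment integral and then apply the monomial spectrality relation \eqref{eq:spectralX} on both sides. By definition, $\mathscr M^{[N+q,M]}=\int X^{[N+q]}_{[q]}(x)\,\d\mu(x)\,\bigl(X^{[M]}_{[p]}(x)\bigr)^\top$. The matrix $\Lambda^{[N,N+q]}_{[q]}$ is a constant $N\times(N+q)$ matrix, so it commutes with integration. Multiplying on the left therefore gives
\[
\Lambda^{[N,N+q]}_{[q]}\mathscr M^{[N+q,M]}=\int \Lambda^{[N,N+q]}_{[q]}X^{[N+q]}_{[q]}(x)\,\d\mu(x)\,\bigl(X^{[M]}_{[p]}(x)\bigr)^\top .
\]
Applying \eqref{eq:spectralX} with $r=q$ replaces $\Lambda^{[N,N+q]}_{[q]}X^{[N+q]}_{[q]}(x)$ by $xX^{[N]}_{[q]}(x)$. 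The left-hand side thus equals $\int x\,X^{[N]}_{[q]}(x)\,\d\mu(x)\,\bigl(X^{[M]}_{[p]}(x)\bigr)^\top$.

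The right-hand side is handled symmetrically. We write
\[
\mathscr M^{[N,M+p]}\bigl(\Lambda^{[M,M+p]}_{[p]}\bigr)^\top=\int X^{[N]}_{[q]}(x)\,\d\mu(x)\,\bigl(\Lambda^{[M,M+p]}_{[p]}X^{[M+p]}_{[p]}(x)\bigr)^\top ,
\]
using $(AB)^\top=B^\top A^\top$ and again that a constant matrix commutes with the integral. Then \eqref{eq:spectralX} with $r=p$ turns the transposed factor into $x\bigl(X^{[M]}_{[p]}(x)\bigr)^\top$. Since $x$ is a scalar, it can be moved next to the measure, and both sides become the same integral $\int x\,X^{[N]}_{[q]}\,\d\mu\,(X^{[M]}_{[p]})^\top$. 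This identifies them.

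The one step that needs care is \eqref{eq:spectralX} itself, in particular at the truncation boundary when $s_r\neq 0$. Since it is stated earlier, I will invoke it; if I wanted to check it, I would use the second block description of $\Lambda^{[N,N+r]}_{[r]}$.

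\begin{itemize}
\item Its first $N-r-s_r$ rows shift the block rows $x^kI_r$ to $x^{k+1}I_r$.
\item Its bottom-right identity $I_{r+s_r}$ picks out the rows $N+1,\dots,N+r+s_r$ of $X^{[N+r]}_{[r]}$. Because $N+r=(N_r+1)r+s_r$, these rows are exactly $x$ times the last $r+s_r$ rows of $X^{[N]}_{[r]}$.
\end{itemize}

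It is also worth checking the sizes: $N\times(N+q)$ times $(N+q)\times M$ on the left, and $N\times(M+p)$ times $(M+p)\times M$ on the right, so both products are $N\times M$. With that confirmed, the proof is a direct computation with no real obstacle.
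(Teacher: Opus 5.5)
Your proof is correct and is exactly the paper's argument: you move the constant shift matrices inside the moment integral, apply \eqref{eq:spectralX} with $r=q$ on the left and $r=p$ on the right, and pass the scalar $x$ across $\d\mu$. There is one small slip in your optional check of \eqref{eq:spectralX}, which the proof does not rely on: the block $I_{r+s_r}$ occupies the last $r+s_r$ columns, so it selects rows $N-s_r+1,\dots,N+r$ of $X^{[N+r]}_{[r]}$, not rows $N+1,\dots,N+r+s_r$ (those rows do not exist when $s_r\neq 0$); these are indeed $x$ times the last $r+s_r$ rows of $X^{[N]}_{[r]}$.
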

\begin{proof}
	Is a direct consequence of Equation \eqref{eq:spectralX}. Indeed,
	\[
	\begin{aligned}
	\Lambda^{[N,N+q]}_{[q]}\mathscr M^{[N+q,M]}&=\Lambda^{[N,N+q]}_{[q]}\int X^{[N+q]}_{[q]}(x)\d\mu(x) \left(X^{[M]}_{[p]}(x)\right)^\top\\&=\int xX^{[N]}_{[q]}(x)\d\mu(x) \left(X^{[M]}_{[p]}(x)\right)^\top=\int X^{[N]}_{[q]}(x)\d\mu(x) \left(xX^{[M]}_{[p]}(x)\right)^\top\\
	&=\int X^{[N]}_{[q]}(x)\d\mu(x) \left(\Lambda^{[M,M+p]}_{[p]}X^{[M+p]}_{[p]}(x)\right)^\top=\mathscr M^{[N,M+p]} 	\left(\Lambda^{[M,M+p]}_{[p]}\right)^\top.
	\end{aligned}
	\]
\end{proof}
In particular, 

\begin{coro}
	We have  the following Hankel symmetry type relations
\begin{subequations}\label{eq:factorization}
	\begin{align}
			\label{eq:factorization.0}
		\Lambda^{[N,N+q]}_{[q]}\mathscr M^{[N+q,N]}&=\mathscr M^{[N,N+p]}	\left(\Lambda^{[N,N+p]}_{[p]}\right)^\top,\\
	\Lambda^{[N-q,N]}_{[q]}\mathscr M^{[N,N]}&=\mathscr M^{[N-q,N+p]}	\left(\Lambda^{[N,N+p]}_{[p]}\right)^\top,\\\label{eq:factorization.-+}
		\Lambda^{[N,N+q]}_{[q]}\mathscr M^{[N+q,N-p]}&=\mathscr M^{[N,N]}	\left(\Lambda^{[N-p,N]}_{[p]}\right)^\top.
\end{align}
\end{subequations}
\end{coro}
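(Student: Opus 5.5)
The corollary follows by specializing the indices $(N,M)$ in the Hankel type symmetry proposition
\[
\Lambda^{[N,N+q]}_{[q]}\mathscr M^{[N+q,M]}=\mathscr M^{[N,M+p]}\left(\Lambda^{[M,M+p]}_{[p]}\right)^\top ,
\]
which holds for all $N,M\in\N$. Its proof only uses the monomial spectrality \eqref{eq:spectralX} for arbitrary truncation sizes, so it can be reused freely after relabelling. Each of the three identities should then be a single substitution.

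For \eqref{eq:factorization.0}, set $M=N$ in the proposition. This gives $\Lambda^{[N,N+q]}_{[q]}\mathscr M^{[N+q,N]}=\mathscr M^{[N,N+p]}\left(\Lambda^{[N,N+p]}_{[p]}\right)^\top$ directly.

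For the second identity, replace $N$ by $N-q$ and set $M=N$. The left factor becomes $\Lambda^{[N-q,N]}_{[q]}$ acting on $\mathscr M^{[N,N]}$. The right-hand side becomes $\mathscr M^{[N-q,N+p]}\left(\Lambda^{[N,N+p]}_{[p]}\right)^\top$, as stated.

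For \eqref{eq:factorization.-+}, keep $N$ and set $M=N-p$. The left-hand side becomes $\Lambda^{[N,N+q]}_{[q]}\mathscr M^{[N+q,N-p]}$ and the right-hand side becomes $\mathscr M^{[N,N]}\left(\Lambda^{[N-p,N]}_{[p]}\right)^\top$.

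The only point needing care is that the shifted indices be admissible. The second identity requires $N\geq q$ and the third requires $N\geq p$, so that the bordered shift matrices $\Lambda^{[N-q,N]}_{[q]}$ and $\Lambda^{[N-p,N]}_{[p]}$ are defined. In these cases the Euclidean divisions $N-q=N_q'q+s_q'$ and $N-p=N_p'p+s_p'$ have the same remainders $s_q,s_p$ as $N$, with quotient decreased by one. Hence the block description of the bordered shift matrices and the spectral relation $\Lambda^{[K,K+r]}_{[r]}X^{[K+r]}_{[r]}=xX^{[K]}_{[r]}$ apply verbatim with $K=N-q$ or $K=N-p$. I will state this restriction on $N$ explicitly and otherwise regard the corollary as immediate.
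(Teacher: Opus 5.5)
Your proposal is correct and follows the paper's own route: the paper introduces the corollary with ``In particular'' right after the Hankel type symmetry proposition, so it is obtained by the specializations $M=N$; then $N\to N-q$ with $M=N$; then $M=N-p$, exactly as you do. Your remark that one needs $N\geq q$ and $N\geq p$ is a harmless extra precaution, and the remainder bookkeeping is not needed, since the proposition and \eqref{eq:spectralX} already hold for arbitrary $N,M\in\N$.
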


Note that for $N\neq M$ the matrix $\mathscr M^{[N,M]}$ is a rectangular matrix, and it happens to be a square matrix if and only if $N=M$. In this case we will use the notation \[\mathscr M^{[N,N]}\eqcolon\mathscr M_N.\]

\section{The Gauss-Borel factorization of the moment matrices}

We assume that the matrix of moments $\mathscr M_N$ admits a Gauss--Borel factorization
\[
\mathscr M_N=\mathscr L_N^{-1}\mathscr U_N^{-1}
\] 
in terms of nonsingular lower and upper triangular matrices  \(\mathscr L_{N}\) and \(\mathscr U_{N}\), respectively. This is equivalent to all  leading principal submatrices $\mathscr M_n$, $n\in\{1,\dots, N\}$, of $\mathscr M_{N}$ to be non singular. This is equivalent that for all these matrices submatrices there exists a Gauss--Borel factorization  
 \[
 \mathscr M_n=\mathscr L_n^{-1} \mathscr U_n^{-1},
 \]
 where $\mathscr L_n$ and $\mathscr U_n$, can be taken, for example, as the $n$-th leading principal submatrices of  $\mathscr L_N$ and $\mathscr U_N$, respectively.
 
\begin{defi}[Polynomial matrices]
	 We define the following  polynomials vectors 
 \[
 \begin{aligned}
 	B^{[N]}&\coloneq\mathscr L_{N} X_{[q]}^{[N]}, &
 	A^{[N]}&\coloneq\left( X_{[p]}^{[N]}\right)^\top\mathscr U_{N}.
 \end{aligned}
 \]
 We will use the notation
 \[
 \begin{aligned}
 	B^{[N]}&=\begin{bNiceArray}{ccc}[margin,cell-space-limits=1pt]
 		B^{(1)}_0&\Cdots &B^{(q)}_0\\
 		B^{(1)}_1&\Cdots &B^{(q)}_1\\
 		\Vdots& &\Vdots\\\\
 		B^{(1)}_{N-1}&\Cdots &B^{(q)}_{N-1}\\
 	\end{bNiceArray}, &
 	 	A^{[N]}&=\begin{bNiceArray}{ccccc}[margin,cell-space-limits=2pt]
 		A^{(1)}_0&	A^{(1)}_1&\Cdots &&A^{(1)}_{N-1}\\
 \Vdots	&\Vdots & &&\Vdots\\
 		A^{(p)}_0&	A^{(p)}_1&\Cdots &&A^{(p)}_{N-1}\\
 	\end{bNiceArray}, 
 \end{aligned}
 \]
\end{defi}
\begin{pro}
Whenever $\mathscr{M}_N$ admits a Gauss--Borel factorization the following relations hold:
 \NiceMatrixOptions{cell-space-top-limit=2pt, cell-space-bottom-limit=2pt}
 \begin{align*}
 	 \mathscr M^{[N, N+p]}&=\mathscr{L}_{N}^{-1}\begin{bNiceArray}{c|c}
 	\mathscr{U}_{N}^{-1} &U^{[N,N+p]}
 	\end{bNiceArray},\\
 	\mathscr M^{[N+q,N]}&=\left[\begin{NiceArray}{c}[margin,cell-space-limits=2pt]
 		\mathscr{L}_{N}^{-1} \\\hline L^{[N+q,N]}
 	\end{NiceArray}\right]\mathscr{U}_{N}^{-1},\\
 \mathscr M^{[N-q,N+p]}&=\mathscr{L}_{N-q}^{-1}\begin{bNiceArray}{c|c}
 \mathscr{U}_{N-q}^{-1} &U^{[N-q,N+p]}
 \end{bNiceArray},\\
 \mathscr M^{[N+q,N-p]}&=\left[\begin{NiceArray}{c}[margin,cell-space-limits=2pt]
\mathscr{L}_{N-p}^{-1} \\\hline L^{[N+q,N-p]}
 \end{NiceArray}\right]\mathscr{U}_{N-p}^{-1}
 \end{align*}
 where $U^{[N,N+p]}\in\R^{N\times p}$, $L^{[N+q,N]}\in\R^{ q\times N}$, 
 $U^{[N-q,N+p]}\in\R^{(N-q)\times (q+p)}$ and $L^{[N+q,N-p]}\in\R^{ (q+p)\times(N-p)}$.
\end{pro}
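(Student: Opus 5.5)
The plan is a block decomposition argument. The key observation is that the square moment matrix $\mathscr M_N$ is the leading $N\times N$ block of each of the rectangular moment matrices in the statement, and that leading principal submatrices of $\mathscr M_N$ are again moment matrices $\mathscr M_n$.

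First, note the consistency of truncations. The truncation $X^{[N]}_{[r]}$ is precisely the first $N$ rows of $X^{[N']}_{[r]}$ for every $N'\geq N$. This is clear from the explicit expression in the proposition on truncated monomial matrices, since the rows are ordered as $x^k e_j$. Hence $\mathscr M^{[N,M]}$ is the upper-left $N\times M$ submatrix of $\mathscr M^{[N',M']}$ whenever $N\le N'$ and $M\le M'$.

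For the first identity, I would write
\[
\mathscr M^{[N,N+p]}=\begin{bNiceArray}{c|c}\mathscr M_N & C\end{bNiceArray},\qquad C\in\R^{N\times p}.
\]
Multiplying on the left by $\mathscr L_N$ and using $\mathscr L_N\mathscr M_N=\mathscr U_N^{-1}$ gives $\mathscr L_N\mathscr M^{[N,N+p]}=\begin{bNiceArray}{c|c}\mathscr U_N^{-1}&\mathscr L_NC\end{bNiceArray}$. It then suffices to set $U^{[N,N+p]}\coloneq\mathscr L_N C$.

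Symmetrically, for the second identity I would write $\mathscr M^{[N+q,N]}$ as $\mathscr M_N$ stacked over a block $R\in\R^{q\times N}$. Multiplying on the right by $\mathscr U_N$ and using $\mathscr M_N\mathscr U_N=\mathscr L_N^{-1}$, I set $L^{[N+q,N]}\coloneq R\,\mathscr U_N$.

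The remaining two identities follow the same pattern, with $\mathscr M_{N-q}$ (respectively $\mathscr M_{N-p}$) as the leading square block. Consider $\mathscr M^{[N-q,N+p]}$. Its leading $(N-q)\times(N-q)$ block is $\mathscr M_{N-q}$, and the remaining $(N-q)\times(q+p)$ columns form the block $C'$. Here I use the fact recalled just before the definition of the polynomial matrices: $\mathscr M_{N-q}$ admits the Gauss--Borel factorization $\mathscr L_{N-q}^{-1}\mathscr U_{N-q}^{-1}$ with factors equal to the leading principal submatrices of $\mathscr L_N$ and $\mathscr U_N$. This holds because leading principal submatrices of a product of a lower and an upper triangular matrix factor as the product of the corresponding leading blocks. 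Then $U^{[N-q,N+p]}\coloneq\mathscr L_{N-q}C'$. Likewise $L^{[N+q,N-p]}\coloneq R'\,\mathscr U_{N-p}$, where $R'$ is the bottom $(q+p)\times(N-p)$ block of $\mathscr M^{[N+q,N-p]}$.

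The only real point to check carefully, and the step I would expect to be the main obstacle, is the nesting claim for the truncated monomial matrices when $N$ is not a multiple of $r$. Here the last row block is $x^{N_r}I_{s_r,r}$, and one must verify that this is exactly the first $s_r$ rows of $x^{N_r}I_r$. The definition of $I_{s,r}$ gives this at once.

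Once nesting is established, the block sizes stated in the proposition follow by counting rows and columns. If one wishes to avoid the index-$(N-q)$ reasoning, one can instead deduce the last two identities from the first two applied with $N$ replaced by $N-q$ (using $N-q+q+p=N+p$ columns) and by $N-p$, respectively.
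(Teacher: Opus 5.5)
Your block-decomposition argument is correct and is essentially the paper's own. The paper carries it out in the proof of the proposition that immediately follows, giving the explicit blocks: it writes $X^{[N+q]}_{[q]}$ as $X^{[N]}_{[q]}$ stacked over $x^{N_q}\mathfrak X_{[q,s_q]}(x)$, and similarly for $X^{[N+p]}_{[p]}$, so that $\mathscr M_N$ is the leading block. It then factors out $\mathscr L_N^{-1}$ and $\mathscr U_N^{-1}$, and your $\mathscr L_N C$ and $R\,\mathscr U_N$ are exactly its $\int B^{[N]}(x)\d\mu(x)\,x^{N_p}\bigl(\mathfrak X_{[p,s_p]}(x)\bigr)^\top$ and $\int x^{N_q}\mathfrak X_{[q,s_q]}(x)\d\mu(x)A^{[N]}(x)$.

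The only slip is in your closing alternative. The first two identities with $N$ replaced by $N-q$ or $N-p$ only give $\mathscr M^{[N-q,N-q+p]}$ and $\mathscr M^{[N-p+q,N-p]}$. To reach $\mathscr M^{[N-q,N+p]}$ and $\mathscr M^{[N+q,N-p]}$ you need the same argument with $q+p$ appended columns or rows, which is what your direct treatment already does.
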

 We now introduce an important polynomial matrix:
 \begin{defi}
For $s\in\{0,1,\dots,r\}$, let us consider
 	\[
 	\mathfrak{X}_{[r,s]}(x)\coloneq\begin{bNiceArray}{w{c}{.2cm}w{c}{.5cm}w{c}{.2cm}|w{c}{.2cm}w{c}{.2cm}w{c}{.5cm}w{c}{.5cm}}[margin,cell-space-limits=2pt]
 		\Block[borders={bottom}]{6-3}<\large>{0_{ (r-s)\times s}}&&&
 		\Block[borders={bottom}]{6-4}<\large>{I_{(r-s)\times(r-s)}}&&&\\
 		\\
 		\\
 		\\\\\\
 		\Block{4-3}<\large>{x I_{s}}&&&
 		\Block{4-4}<\large>{0_{ s\times(r-s)}}&&&
 		\\
 		\\\\\\
 	\end{bNiceArray}.
 	\]
 \end{defi}
 
 This matrix enjoys some important properties:
 \begin{pro}We have
\begin{enumerate}
	\item 
 \[
 \mathfrak X_{[r,r]}=x I_r.\]
\item 
 \[
 \mathfrak{X}_{[r,s]}=\mathfrak{X}_{[r,1]}^s,
 \]
\item 
 \[
 \mathfrak{X}_{[r,s]}\mathfrak{X}_{[r,s']}=\mathfrak{X}_{[r,s']}\mathfrak{X}_{[r,s]}=\mathfrak{X}_{[r,s+s']}.
 \]
\end{enumerate}
 \end{pro}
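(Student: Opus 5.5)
Write $\mathfrak X_{[r,s]}$ explicitly as a sum of matrix units. Let $e_1,\dots,e_r$ denote the standard basis of $\R^r$ and $E_{ij}=e_ie_j^\top$. Reading off the block structure in the definition, the first $r-s$ rows carry the identity in columns $s+1,\dots,r$, and the last $s$ rows carry $xI_s$ in columns $1,\dots,s$. Hence
\[
\mathfrak X_{[r,s]}=\sum_{i=1}^{r-s}E_{i,i+s}+x\sum_{i=1}^{s}E_{r-s+i,\,i}.
\]
Equivalently, $\mathfrak X_{[r,s]}e_j=e_{j-s}$ for $j>s$, and $\mathfrak X_{[r,s]}e_j=x\,e_{j-s+r}$ for $j\le s$. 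This is the action of the $s$-th power of a cyclic shift on indices mod $r$, with a factor $x$ picked up each time the index wraps around. Compactly, identify $e_j$ with $j\in\mathbb Z/r$ and write $\mathfrak X_{[r,s]}e_j=x^{\epsilon_s(j)}e_{j-s}$, where $\epsilon_s(j)=1$ exactly when $j-s\le 0$ in the range $1,\dots,r$.

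Item (1) is immediate: for $s=r$ the first block is empty and $\mathfrak X_{[r,r]}=xI_r$.

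For item (3), I compute the action on basis vectors. Take $s+s'\le r$, which is the range where $\mathfrak X_{[r,s+s']}$ is defined. Then $\mathfrak X_{[r,s]}\mathfrak X_{[r,s']}e_j$ shifts the index down by $s'$ and then by $s$, for a total shift of $s+s'$ mod $r$. The power of $x$ collected equals the number of wraps past $1$. Since $s+s'\le r$, this number is $1$ if $j\le s+s'$ and $0$ otherwise. I would verify this by the case split $j\le s'$, $s'<j\le s+s'$, $j>s+s'$; the only point needing care is that at most one wrap occurs in total. The total exponent therefore matches $\epsilon_{s+s'}(j)$, which gives $\mathfrak X_{[r,s]}\mathfrak X_{[r,s']}=\mathfrak X_{[r,s+s']}$. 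The expression is symmetric in $s,s'$, so the two products commute.

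Item (2) follows from (3) by induction on $s$. The case $\mathfrak X_{[r,0]}=I_r$ is trivial, and $\mathfrak X_{[r,s]}=\mathfrak X_{[r,1]}\mathfrak X_{[r,s-1]}$ for $1\le s\le r$. As a consistency check, this gives $\mathfrak X_{[r,1]}^r=xI_r$, in agreement with (1).

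The main obstacle is the bookkeeping of wraparound in (3). It is cleanest to handle it by viewing $\mathfrak X_{[r,1]}$ as the companion-type matrix with $\mathfrak X_{[r,1]}^r=xI_r$, and tracking indices modulo $r$.
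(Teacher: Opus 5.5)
Your proof is correct. The paper gives no proof of this proposition and treats it as an immediate check from the block form of $\mathfrak X_{[r,s]}$ (zero and $I_{r-s}$ in the top $r-s$ rows, $xI_s$ and zero in the bottom $s$ rows); your basis-vector computation writes that check out in full, including the point that item (3) only makes sense for $s+s'\le r$, which is exactly what limits the composite to a single wraparound.
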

 This matrix is useful to represent the matrix of monomials
\begin{pro}
It holds that
 \[
 X_{[r]}^{[N+r]}=\begin{bNiceMatrix}
 	 	X_{[r]}^{[N]}\\
 	 x^{N_r}	\mathfrak{X}_{[r,s_r]}
 \end{bNiceMatrix}
 \]
\end{pro}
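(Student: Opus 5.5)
The identity is a direct comparison of block rows, so the plan is to write both sides explicitly using the description of the truncation $X^{[N]}_{[r]}$ given earlier. Write $N=N_r r+s_r$. Then $N+r=(N_r+1)r+s_r$, so the Euclidean data of $N+r$ are $(N+r)_r=N_r+1$ and $s_r$ unchanged. Hence
\[
X^{[N+r]}_{[r]}=\begin{bNiceMatrix} I_r\\ xI_r\\ \Vdots\\ x^{N_r}I_r\\ x^{N_r+1}I_{s_r,r}\end{bNiceMatrix},
\qquad
X^{[N]}_{[r]}=\begin{bNiceMatrix} I_r\\ xI_r\\ \Vdots\\ x^{N_r-1}I_r\\ x^{N_r}I_{s_r,r}\end{bNiceMatrix}.
\]
The first $N=N_r r+s_r$ rows of $X^{[N+r]}_{[r]}$ are the blocks $I_r,\dots,x^{N_r-1}I_r$ (that is, $N_r r$ rows) followed by the first $s_r$ rows of $x^{N_r}I_r$, namely $x^{N_r}I_{s_r,r}$. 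So the top $N$ rows are exactly $X^{[N]}_{[r]}$.

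It remains to identify the last $r$ rows of $X^{[N+r]}_{[r]}$. These are the last $r-s_r$ rows of $x^{N_r}I_r$ followed by $x^{N_r+1}I_{s_r,r}$. The rows $s_r+1,\dots,r$ of $I_r$ are the $(r-s_r)\times r$ matrix $\begin{bNiceMatrix}0_{(r-s_r)\times s_r} & I_{r-s_r}\end{bNiceMatrix}$, and $I_{s_r,r}=\begin{bNiceMatrix}I_{s_r} & 0_{s_r\times(r-s_r)}\end{bNiceMatrix}$. Factoring out $x^{N_r}$, the bottom block becomes
\[
x^{N_r}\begin{bNiceMatrix}0_{(r-s_r)\times s_r} & I_{r-s_r}\\ xI_{s_r} & 0_{s_r\times(r-s_r)}\end{bNiceMatrix}=x^{N_r}\mathfrak X_{[r,s_r]},
\]
which is precisely the definition of $\mathfrak X_{[r,s]}$ with $s=s_r$.

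The only point needing care is the edge case $s_r=0$. There $I_{0,r}$ is empty, the bottom block is the full $x^{N_r}I_r$, and $\mathfrak X_{[r,0]}=I_r$, so the formula still holds. The proof is otherwise routine bookkeeping. As a consistency check, the result agrees with the spectral relation \eqref{eq:spectralX}: multiplying by $\Lambda^{[N,N+r]}_{[r]}$ shifts rows by $r$, which is compatible with the block structure just obtained.
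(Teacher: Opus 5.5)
Your proof is correct: recomputing the Euclidean data of $N+r$, identifying the top $N$ rows with $X^{[N]}_{[r]}$, and identifying the bottom $r$ rows with $x^{N_r}\mathfrak X_{[r,s_r]}$ (including the case $s_r=0$, where $\mathfrak X_{[r,0]}=I_r$) is exactly the direct check from the definitions. The paper states this proposition without proof, treating it as the same immediate consequence of the definitions of $X^{[N]}_{[r]}$, $I_{s,r}$ and $\mathfrak X_{[r,s]}$ that you spelled out.
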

It also allows for explicit expressions for block related to the truncations of the inverse of the triangular factors of the Gauss--Borel factorization

\begin{pro}
The following relations are fulfilled
	\begin{align}\label{eq:L}
		L^{[N+q,N]}&=\int x^{N_q} \mathfrak{X}_{[q,s_q]}(x)  \d\mu(x) A^{[N]}(x),\\	\label{eq:U}
		U^{[N,N+p]}&=\int B^{[N]}(x)\d\mu(x) \left(\mathfrak{X}_{[p,s_p]}(x)\right)^\top x^{N_p}.
	\end{align}
\end{pro}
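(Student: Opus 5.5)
Both identities come from the block factorizations in the preceding proposition, combined with the splitting $X^{[N+r]}_{[r]}=\begin{bmatrix} X^{[N]}_{[r]}\\ x^{N_r}\mathfrak X_{[r,s_r]}\end{bmatrix}$. Take \eqref{eq:L} first. We have $\mathscr M^{[N+q,N]}=\int X^{[N+q]}_{[q]}\d\mu\,(X^{[N]}_{[p]})^\top$. Applying the splitting with $r=q$ shows that the last $q$ rows of $\mathscr M^{[N+q,N]}$ equal
\[
\int x^{N_q}\mathfrak X_{[q,s_q]}(x)\,\d\mu(x)\,\bigl(X^{[N]}_{[p]}(x)\bigr)^\top .
\]
The first $N$ rows form $\mathscr M_N$. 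On the other hand, the preceding proposition gives $\mathscr M^{[N+q,N]}=\begin{bmatrix}\mathscr L_N^{-1}\\ L^{[N+q,N]}\end{bmatrix}\mathscr U_N^{-1}$. Comparing the last $q$ rows yields
\[
L^{[N+q,N]}\,\mathscr U_N^{-1}=\int x^{N_q}\mathfrak X_{[q,s_q]}\,\d\mu\,\bigl(X^{[N]}_{[p]}\bigr)^\top .
\]
Multiplying on the right by $\mathscr U_N$ and pulling it inside the integral turns $(X^{[N]}_{[p]})^\top\mathscr U_N$ into $A^{[N]}$, by the definition of $A^{[N]}$. This is exactly \eqref{eq:L}.

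The argument for \eqref{eq:U} is dual. Splitting $X^{[N+p]}_{[p]}$ shows that the last $p$ columns of $\mathscr M^{[N,N+p]}$ are
\[
\int X^{[N]}_{[q]}\,\d\mu\,\bigl(\mathfrak X_{[p,s_p]}\bigr)^\top x^{N_p}.
\]
The factorization $\mathscr M^{[N,N+p]}=\mathscr L_N^{-1}\begin{bmatrix}\mathscr U_N^{-1} & U^{[N,N+p]}\end{bmatrix}$ identifies those columns with $\mathscr L_N^{-1}U^{[N,N+p]}$. Multiplying on the left by $\mathscr L_N$ and using $B^{[N]}=\mathscr L_N X^{[N]}_{[q]}$ gives \eqref{eq:U}.

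The only point needing care is the identification of the bordered blocks. We must confirm that the extra $q$ rows (respectively $p$ columns) of the rectangular moment matrices really are $x^{N_r}\mathfrak X_{[r,s_r]}$ applied to the measure. For this we use the block form of $X^{[N+r]}_{[r]}$ stated just before, and check that the entries $x^{N_r}I_{s_r,r}$ and $x^{N_r+1}$ align with the blocks $0_{(r-s)\times s}$, $I_{r-s}$, $xI_s$ of $\mathfrak X_{[r,s_r]}$. This alignment is given by the earlier proposition, so the proof is otherwise a direct block comparison. It uses the invertibility of $\mathscr L_N$ and $\mathscr U_N$, which is guaranteed by the assumed Gauss--Borel factorization.
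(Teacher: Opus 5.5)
Your proof is correct and is essentially the paper's own argument. You split $X^{[N+r]}_{[r]}$ to identify the bordered block of $\mathscr M^{[N+q,N]}$ (resp.\ $\mathscr M^{[N,N+p]}$) as $\int x^{N_q}\mathfrak X_{[q,s_q]}\,\d\mu\,(X^{[N]}_{[p]})^\top$ (resp.\ its dual), match it against the block Gauss--Borel form of the preceding proposition, and absorb $\mathscr U_N$ (resp.\ $\mathscr L_N$) into the integral to produce $A^{[N]}$ (resp.\ $B^{[N]}$); the paper writes this as one chain of block equalities rather than an explicit row/column comparison.
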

\begin{proof}
First, we compute that
\[
\begin{aligned}
	\mathscr M^{[N+q,N]}&=\bigintss \begin{bNiceArray}{w{c}{2.5cm}}[margin,cell-space-limits=2pt]
\Block{3-1}{ X^{[N]}_{[q]}(x)}\\ \\\\\hline x^{N_q} \mathfrak{X}_{[q,s_q]}(x) 
\end{bNiceArray}\d\mu(x) \left(X^{[N]}_{[p]}(x)\right)^\top\\&=\begin{bNiceArray}{c}[margin,cell-space-limits=2pt]
\Block{3-1}{\mathscr M_N }\\ \\\\\hline x^{N_q}\left(\mathfrak{X}_{[q,s_q]}(x)\right)^\top \d\mu(x)\left(X^{[N]}_{[p]}(x)\right)^\top
\end{bNiceArray}\\&=
\begin{bNiceArray}{w{c}{5.5cm}}[margin,cell-space-limits=2pt]
\Block{3-1}{\mathscr L_N^{-1}} \\\\\\\hline\bigintssss x^{N_q} \mathfrak{X}_{[q,s_q]}(x)  \d\mu(x) A^{[N]}(x)
\end{bNiceArray}\mathscr U_N^{-1},\\
	\mathscr M^{[N,N+p]}&=\bigintssss X^{[N]}_{[q]}(x)\d\mu(x)\begin{bNiceArray}{w{c}{2.5cm}|c}
	\left(X^{[N]}(x)\right)^\top& x^{N_p} \left(\mathfrak{X}_{[p,s_p]}(x)\right)^\top
\end{bNiceArray}\\&=\begin{bNiceArray}{w{c}{2cm}|c}
	\mathscr M_N & \bigintssss X^{[N]}_{[q]}(x)\d\mu(x) x^{N_p}\left(\mathfrak{X}_{[p,s_p]}(x)\right)^\top
\end{bNiceArray}\\&=\mathscr L_N^{-1}
\begin{bNiceArray}{w{c}{2cm}|c}
	\mathscr U_N^{-1} & \bigintssss B^{[N]}(x)\d\mu(x) x^{N_p}\left(\mathfrak{X}_{[p,s_p]}(x)\right)^\top
\end{bNiceArray}.
\end{aligned}
\]

\end{proof}

 Being $\mathscr L_N$ and $\mathscr U_N$ lower and upper triangular non singular matrices, respectively, they have the block structure
 \[
 \begin{aligned}
 	\mathscr L_N&=
 	\begin{bNiceArray}{c|c}[margin,cell-space-limits=2pt]
 		\mathscr L_{N-q} & 0_{(N-q)\times q}\\\hline
 		\mathscr l^{[N,N-q]}&\mathscr L_{[N-q,N]}
 	\end{bNiceArray}, &
 	 	\mathscr U_N&=
	\begin{bNiceArray}{c|c}[margin,cell-space-limits=2pt]
 		\mathscr U_{N-p} & \mathscr u^{[N-p,N]} \\\hline
 	0_{p\times (N-p)}	&\mathscr U_{[N-p,N]}
 	\end{bNiceArray}, 
 	 \end{aligned}
 \]
 where $\mathscr L_{[N-q,N]}\in\R^{q\times q}$ and $\mathscr U_{[N-p,N]}\in\R^{p\times p}$ are lower and upper triangular non singular matrices, respectively. Hence, their inverses have the form
\begin{equation}\label{eq:inverses_direct}
	 \begin{aligned}
 	\mathscr L_N^{-1}&=
  	\begin{bNiceArray}{c|c}[margin,cell-space-limits=2pt]
 		\mathscr L_{N-q}^{-1}& 0_{(N-q)\times q}\\\hline
 	\tilde{\mathscr l}^{[N,N-q]}&\mathscr L_{[N-q,N]}^{-1}
 	\end{bNiceArray}, &
 	\mathscr U_N^{-1}&=
 	\begin{bNiceArray}{c|c}[margin,cell-space-limits=2pt]
 		\mathscr U_{N-p}^{-1} & \tilde{\mathscr u}^{[N-p,N]} \\\hline
 		0_{p\times (N-p)}	&\mathscr U_{[N-p,N]}^{-1}
 	\end{bNiceArray}.
 \end{aligned}
\end{equation}
For a better block representation of these inverses 
we write  \[\begin{aligned}
	A^{[N]}&=\begin{bNiceArray}{c|c}[]
 	A^{[N-q]} & A^{[N,q]}
 \end{bNiceArray}, &B^{[N]}&=\begin{bNiceMatrix}[margin,cell-space-limits=2pt]
 	B^{[N-p]} \\ \hline B^{[N,p]}
 \end{bNiceMatrix}, &A^{[N,q]},B^{[N,p]}&\in\R^{p\times q}[x],
\end{aligned}\]
 with
 \[
\begin{aligned}
	B^{[N,p]}&=\begin{bNiceArray}{ccc}[margin,cell-space-limits=1pt]
			B^{(1)}_{N-p}&\Cdots &B^{(q)}_{N-p}\\
		B^{(1)}_{N-p+1}&\Cdots &B^{(q)}_{N-p+1}\\
		\Vdots& &\Vdots\\\\
		B^{(1)}_{N-1}&\Cdots &B^{(q)}_{N-1}\\
	\end{bNiceArray}, &
	A^{[N,q]}&=\begin{bNiceArray}{ccccc}[margin,cell-space-limits=2pt]
	A^{(1)}_{N-q}&	A^{(1)}_{N-q+1}&\Cdots &&A^{(1)}_{N-1}\\
		\Vdots	&\Vdots & &&\Vdots\\
	A^{(p)}_{N-q}&	A^{(p)}_{N-q+1}&\Cdots &&A^{(p)}_{N-1}\\
	\end{bNiceArray}.
\end{aligned}
\]
 \begin{pro}[Block structure of triangular inverses]
The inverse matrices of the triangular factors of the Gauss--Borel factorization have the following block structure:
 \[
 \begin{aligned}
 	\mathscr L_{N}^{-1}&=\begin{bNiceArray}{w{c}{1.5cm}|w{c}{5.7cm}}[margin,cell-space-limits=2pt]
 		\mathscr L_{N-q} ^{-1}& 0_{(N-q)\times q}\\\hline
 		L^{[N,N-q]} & \bigintsss x^{N_q-1} \mathfrak{X}_{[q,s_q]}(x) \d\mu(x)A^{[N,q]}(x)
 	\end{bNiceArray}
 	,\\
 	\mathscr U_N ^{-1}&=\begin{bNiceArray}{w{c}{1.5cm}|w{c}{6.5cm}}[margin,cell-space-limits=2pt]
 	\mathscr U_{N-p} ^{-1}& U^{[N-p,N]} \\\hline
 	0_{p\times(N-q)}& \bigintsss B^{[N,p]}(x)\d\mu(x)x^{N_p-1} \left(\mathfrak{X}_{[p,s_p]}(x) \right)^\top
 		\end{bNiceArray}
 \end{aligned}.\]
 \end{pro}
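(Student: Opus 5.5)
The plan is to apply the proposition on $\mathscr M^{[N+q,N]}$ and $\mathscr M^{[N,N+p]}$ (equations \eqref{eq:L} and \eqref{eq:U}) with $N$ replaced by $N-q$ and $N-p$. The key observation is that the bottom $q$ rows of $\mathscr L_N^{-1}$ have already been computed there. Writing $\mathscr M^{[N,N]}$ with $N=(N-q)+q$, that proposition with $N-q$ in place of $N$ gives
\[
\mathscr M^{[N,N-q]}=\begin{bmatrix}\mathscr L_{N-q}^{-1}\\ L^{[N,N-q]}\end{bmatrix}\mathscr U_{N-q}^{-1}.
\]
Here $L^{[N,N-q]}=\int (N-q)$-th block row of the monomials $\times\,\d\mu\, A^{[N-q]}$. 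This identifies the lower left block of $\mathscr L_N^{-1}$, because $\mathscr M^{[N,N-q]}$ is the first $N-q$ columns of $\mathscr M_N=\mathscr L_N^{-1}\mathscr U_N^{-1}$. Since $\mathscr U_N^{-1}$ is upper triangular with leading block $\mathscr U_{N-q}^{-1}$, those columns equal $\mathscr L_N^{-1}$ restricted to its first $N-q$ columns, times $\mathscr U_{N-q}^{-1}$. Comparing with the block structure \eqref{eq:inverses_direct} then gives $\tilde{\mathscr l}^{[N,N-q]}=L^{[N,N-q]}$. The same argument on rows, using $\mathscr M^{[N-p,N]}$, gives $\tilde{\mathscr u}^{[N-p,N]}=U^{[N-p,N]}$.

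Next comes the diagonal corner block $\mathscr L_{[N-q,N]}^{-1}$. From $\mathscr M_N=\mathscr L_N^{-1}\mathscr U_N^{-1}$ we get $\mathscr L_N^{-1}=\mathscr M_N\mathscr U_N=\int X^{[N]}_{[q]}\d\mu\, A^{[N]}$. Its last $q$ rows are $\int x^{N_q-1}(\cdot)\,\d\mu\, A^{[N]}$, where the bottom $q$ entries of $X^{[N]}_{[q]}$ have the form $x^{N_q-1}\mathfrak X_{[q,s_q]}(x)$. To see this, write $N=(N-q)+q$ with $(N-q)_q=N_q-1$ and apply the proposition $X^{[N'+q]}_{[q]}=\begin{bmatrix}X^{[N']}_{[q]}\\ x^{N'_q}\mathfrak X_{[q,s_q]}\end{bmatrix}$ with $N'=N-q$, whose remainder is also $s_q$. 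Restricting to the last $q$ columns of $A^{[N]}$, which is $A^{[N,q]}$, gives the stated formula for the corner block. Dually, $\mathscr U_N^{-1}=\mathscr L_N\mathscr M_N=\int B^{[N]}\d\mu\,(X^{[N]}_{[p]})^\top$. Its last $p$ rows use $B^{[N,p]}$, and its last $p$ columns are $x^{N_p-1}(\mathfrak X_{[p,s_p]})^\top$, which gives the lower right block.

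Finally, zero blocks and the triangular structure follow directly from \eqref{eq:inverses_direct}. The only delicate point is the index bookkeeping: one has to check that the last $q$ rows of $X^{[N]}_{[q]}$ are indeed $x^{N_q-1}\mathfrak X_{[q,s_q]}$, including the case $s_q=0$, where $\mathfrak X_{[q,0]}=I_q$ and the exponent $N_q-1$ is correct. I expect this bookkeeping to be the main (mild) obstacle, together with matching the stated dimension $0_{p\times(N-q)}$, which should read $0_{p\times(N-p)}$.
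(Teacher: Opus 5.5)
Your proposal is correct and follows essentially the paper's route: the paper writes $\mathscr L_N^{-1}=\mathscr M_N\mathscr U_N=\int X^{[N]}_{[q]}(x)\d\mu(x)A^{[N]}(x)$ and $\mathscr U_N^{-1}=\mathscr L_N\mathscr M_N=\int B^{[N]}(x)\d\mu(x)\bigl(X^{[N]}_{[p]}(x)\bigr)^\top$, splits off the last $q$ (resp.\ $p$) monomial rows as $x^{N_q-1}\mathfrak X_{[q,s_q]}(x)$ (resp.\ $x^{N_p-1}(\mathfrak X_{[p,s_p]}(x))^\top$), and reads off the blocks, and your identification of the off-diagonal blocks through the rectangular factorizations of $\mathscr M^{[N,N-q]}$ and $\mathscr M^{[N-p,N]}$ is just an equivalent way of invoking \eqref{eq:L} and \eqref{eq:U} with $N$ shifted down. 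You are also right that $0_{p\times(N-q)}$ should read $0_{p\times(N-p)}$.
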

\begin{proof}
	It follows from
	 \[
	\begin{aligned}
		\mathscr L_{N}^{-1}&=\mathscr M_N \mathscr U_N =\int X^{[N]}(x)\d\mu(x)\left(X^{[N]}(x)\right)^\top\mathscr U_N =
		\bigintss \begin{bNiceArray}{w{c}{3cm}}[margin,cell-space-limits=2pt]
			\Block{3-1}{ X^{[N-q]}_{[q]}(x)}\\ \\\\\hline x^{N_q-1} \mathfrak{X}_{[q,s_q]}(x) 
		\end{bNiceArray}\d\mu(x)A^{[N]}(x)\\&=\begin{bNiceArray}{w{c}{1.5cm}|w{c}{5.5cm}}[margin,cell-space-limits=2pt]
			\mathscr L_{N-q} ^{-1}& 0_{(N-q)\times q}\\\hline
			L^{[N,N-q]} & \bigintssss x^{N_q-1} \mathfrak{X}_{[q,s_q]}(x) \d\mu(x)A^{[N,q]}(x)
		\end{bNiceArray}
		,\\
		\mathscr U_N ^{-1}&=\mathscr L_N\mathscr M_N=\mathscr L_N \int X^{[N]}(x)\d\mu(x)\left(X^{[N]}(x)\right)^\top\\&=
		\bigintssss B^{[N]}(x)\d\mu(x)\begin{bNiceArray}{c|c}[margin,cell-space-limits=2pt]
			\left(X^{[N-p]}_{[p]}(x)\right)^\top&x^{N_p-1} \left(\mathfrak{X}_{[p,s_p]}(x) \right)^\top
		\end{bNiceArray}\\&=\begin{bNiceArray}{w{c}{1.5cm}|w{c}{6.5cm}}[margin,cell-space-limits=2pt]
			\mathscr U_{N-p} ^{-1}& U^{[N-p,N]} \\\hline
			0_{p\times(N-q)}& \bigintssss B^{[N,p]}(x)\d\mu(x)x^{N_p-1} \left(\mathfrak{X}_{[p,s_p]}(x) \right)^\top
		\end{bNiceArray}
	\end{aligned}.\]
\end{proof}

\section{The banded recursion matrices}
The aim of this section is to find matrices that models the recursion relations that the polynomial matrices $A^{[N]}$ and $B^{[N]}$ fulfill. We will do it at the end of the section. 
	For that aim we need some preparation. 
	
	In this paper we assume that $\mathscr M_N$ has a Gauss-Borel factorization, but $\det  \mathscr M_{N+1}=0$. Hence, $\mathscr M_{N+1}$ do not admit a Gauss-Borel factorization, and we only have a finite number of polynomials collected in $A^{[N]}$ and $B^{[N]}$ which satisfy multiple orthogonal relations of mixed type in the step-line. This is the reason the treatment we have given to the recursion relations working only with truncations for which the orthogonal polynomials do exist.
	
	First note that
\begin{pro}
	The following relations hold
	\begin{subequations}\label{eq:shift_banda}
		\begin{align}\label{eq:shift_banda_0}
			\mathscr{L}_{N}	\Lambda^{[N,N+q]}_{[q]}\left[\begin{NiceArray}{c}[margin,cell-space-limits=2pt]
				\mathscr{L}_{N}^{-1} \\\hline L^{[N+q,N]}
			\end{NiceArray}\right]&=\begin{bNiceArray}{c|c}
				\mathscr{U}_{N}^{-1} &U^{[N,N+p]}
			\end{bNiceArray}\left(\Lambda^{[N,N+p]}_{[p]}\right)^\top 	\mathscr U_{N},\\
			\label{eq:shift_banda_1}
			\mathscr{L}_{N-q}	\Lambda^{[N-q,N]}_{[q]}\mathscr L_{N}^{-1}&=\begin{bNiceArray}{c|c}
				\mathscr{U}_{N-q}^{-1} &U^{[N-q,N+p]}
			\end{bNiceArray}\left(\Lambda^{[N,N+p]}_{[p]}\right)^\top 	\mathscr U_{N},\\\label{eq:shift_banda_2}
			\mathscr L_{N}	\Lambda^{[N,N+q]}_{[q]}\left[\begin{NiceArray}{c}[margin,cell-space-limits=2pt]
				\mathscr{L}_{N-p}^{-1} \\\hline L^{[N+q,N-p]}
			\end{NiceArray}\right]&=
			\mathscr U_{N}^{-1}	\left(\Lambda^{[N-p,N]}_{[p]}\right)^\top \mathscr{U}_{N-p}.
		\end{align}
	\end{subequations}
\end{pro}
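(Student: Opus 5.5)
Each of the three identities follows by taking the corresponding Hankel-type symmetry in \eqref{eq:factorization} and substituting the block Gauss--Borel expressions from the preceding proposition. One then multiplies on the left by the appropriate lower triangular factor and on the right by the appropriate upper triangular factor.

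For \eqref{eq:shift_banda_0} start from \eqref{eq:factorization.0}, namely
\[
\Lambda^{[N,N+q]}_{[q]}\mathscr M^{[N+q,N]}=\mathscr M^{[N,N+p]}\left(\Lambda^{[N,N+p]}_{[p]}\right)^\top .
\]
On the left, replace $\mathscr M^{[N+q,N]}$ by $\left[\begin{smallmatrix}\mathscr L_N^{-1}\\ L^{[N+q,N]}\end{smallmatrix}\right]\mathscr U_N^{-1}$. On the right, replace $\mathscr M^{[N,N+p]}$ by $\mathscr L_N^{-1}\left[\begin{smallmatrix}\mathscr U_N^{-1} & U^{[N,N+p]}\end{smallmatrix}\right]$. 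Both substitutions are exactly the block factorizations stated in the proposition following the definition of $A^{[N]},B^{[N]}$. Multiplying the resulting identity on the left by $\mathscr L_N$ and on the right by $\mathscr U_N$ cancels the outer factors and gives \eqref{eq:shift_banda_0} verbatim.

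The other two identities are handled the same way. For \eqref{eq:shift_banda_1} use the second relation in \eqref{eq:factorization}, $\Lambda^{[N-q,N]}_{[q]}\mathscr M_N=\mathscr M^{[N-q,N+p]}(\Lambda^{[N,N+p]}_{[p]})^\top$. Write $\mathscr M_N=\mathscr L_N^{-1}\mathscr U_N^{-1}$ and $\mathscr M^{[N-q,N+p]}=\mathscr L_{N-q}^{-1}\left[\begin{smallmatrix}\mathscr U_{N-q}^{-1} & U^{[N-q,N+p]}\end{smallmatrix}\right]$. Then multiply on the left by $\mathscr L_{N-q}$ and on the right by $\mathscr U_N$; the factor $\mathscr U_N^{-1}\mathscr U_N$ cancels on the left-hand side. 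For \eqref{eq:shift_banda_2} use \eqref{eq:factorization.-+}. Write $\mathscr M^{[N+q,N-p]}=\left[\begin{smallmatrix}\mathscr L_{N-p}^{-1}\\ L^{[N+q,N-p]}\end{smallmatrix}\right]\mathscr U_{N-p}^{-1}$ and $\mathscr M_N=\mathscr L_N^{-1}\mathscr U_N^{-1}$. Then multiply on the left by $\mathscr L_N$ and on the right by $\mathscr U_{N-p}$.

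No step is a real obstacle; the proof is pure bookkeeping. The one point needing care is that the block factorizations of the rectangular moment matrices are used with consistent sizes, for instance that the leading $(N-q)\times(N-q)$ factors are the leading submatrices $\mathscr L_{N-q}$ and $\mathscr U_{N-q}$. This holds by the uniqueness and nestedness of the Gauss--Borel factorization recalled at the start of Section~2. A second check is that every matrix product is dimensionally compatible. For example, in \eqref{eq:shift_banda_2} the product $(\Lambda^{[N-p,N]}_{[p]})^\top\mathscr U_{N-p}$ is $N\times(N-p)$, matching the size of the left-hand side.
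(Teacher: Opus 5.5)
Your proposal is correct and is exactly the argument the paper has in mind: the paper's proof only says the three relations are an immediate consequence of the Hankel symmetries \eqref{eq:factorization} and the Gauss--Borel factorization, and you have written out, for each case, the substitution of the block factorizations and the multiplications on the left and right by the triangular factors. One small remark: you do not need uniqueness to know that the leading blocks are $\mathscr L_{N-q},\mathscr U_{N-q}$ (resp.\ $\mathscr L_{N-p},\mathscr U_{N-p}$), since triangularity alone gives that the leading $n\times n$ block of $\mathscr L_N^{-1}\mathscr U_N^{-1}$ is $\mathscr L_n^{-1}\mathscr U_n^{-1}$.
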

\begin{proof}
	An immediate consequence of 	the Hankel symmetry given in  Equations \eqref{eq:factorization} and the Gauss--Borel factorization.
\end{proof}
We are now ready to introduce three matrices that will represent the mentioned recursion relations:

\begin{defi}[Recursion matrices]
	We define the square  matrix $\mathscr T_N$ and the rectangular matrices $\mathscr T^{[N-q,N]}$ and $\mathscr T^{[N,N-p]}$ as follows:
\begin{subequations}\label{eq:Ts}
	\begin{align}\label{eq:T}
		\mathscr T_{N}&\coloneq		\mathscr{L}_{N}	\Lambda^{[N,N+q]}_{[q]}\left[\begin{NiceArray}{c}[margin,cell-space-limits=2pt]
			\mathscr{L}_{N}^{-1} \\\hline L^{[N+q,N]}
		\end{NiceArray}\right]=\begin{bNiceArray}{c|c}
			\mathscr{U}_{N}^{-1} &U^{[N,N+p]}
		\end{bNiceArray}\left(\Lambda^{[N,N+p]}_{[p]}\right)^\top 	\mathscr U_{N}\in\R^{N\times N},\\
		\label{eq:T1}
		\mathscr T^{[N-q,N]}&\coloneq\mathscr{L}_{N-q}	\Lambda^{[N-q,N]}_{[q]}\mathscr L_{N}^{-1}=\begin{bNiceArray}{c|c}
			\mathscr{U}_{N-q}^{-1} &U^{[N-q,N+p]}
		\end{bNiceArray}\left(\Lambda^{[N,N+p]}_{[p]}\right)^\top 	\mathscr U_{N}\in\R^{(N-q)\times N},\\\label{eq:T2}
		\mathscr T^{[N,N-p]}&\coloneq
		\mathscr U_{N}^{-1}	\left(\Lambda^{[N-p,N]}_{[p]}\right)^\top \mathscr{U}_{N-p} =\mathscr L_{N}	\Lambda^{[N,N+q]}_{[q]}\left[\begin{NiceArray}{c}[margin,cell-space-limits=2pt]
			\mathscr{L}_{N-p}^{-1} \\\hline L^{[N+q,N-p]}
		\end{NiceArray}\right]\in\R^{N\times (N-p)},
	\end{align}
\end{subequations}
\end{defi}
\begin{pro}[Banded structure of the recursion matrices]
Recursion matrices $\mathscr T_N$, $\mathscr T^{[N-q,N]}$ and $\mathscr T^{[N,N-p]}$ are banded matrices with $q$ superdiagonals and $p$ subdiagonals.
\end{pro}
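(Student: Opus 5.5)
Each recursion matrix is given by two expressions in \eqref{eq:Ts}, one built from the lower triangular factors and one from the upper triangular factors, and these are equal by \eqref{eq:shift_banda}. The plan is to read the upper bandwidth off the first expression and the lower bandwidth off the second. Throughout we use that $\Lambda^{[N,N+q]}_{[q]}$ has nonzero entries only in positions $(i,j)$ with $j=i+q$. Similarly, $\big(\Lambda^{[N,N+p]}_{[p]}\big)^\top$ is supported on $i=j+p$. Both facts are seen from the second block description of the bordered shift matrices, whose lower-right block $I_{r+s_r}$ continues the same diagonal.

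\textbf{Upper bandwidth for $\mathscr T_N$.} In $\mathscr T_N=\mathscr L_N\Lambda^{[N,N+q]}_{[q]}\left[\begin{smallmatrix}\mathscr L_N^{-1}\\ L^{[N+q,N]}\end{smallmatrix}\right]$, the matrix $\mathscr L_N$ is lower triangular, so $(\mathscr L_N)_{ik}=0$ for $k>i$. The shift sends row index $k$ to column index $k+q$. The stacked matrix $\left[\begin{smallmatrix}\mathscr L_N^{-1}\\ L^{[N+q,N]}\end{smallmatrix}\right]\in\R^{(N+q)\times N}$ is lower triangular in the rectangular sense: its top block $\mathscr L_N^{-1}$ is lower triangular, and the bottom rows have indices $\geq N$, greater than every column index. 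Hence its $(l,j)$ entry vanishes for $j>l$. A nonzero term in $(\mathscr T_N)_{ij}$ then forces $j\le l=k+q\le i+q$, so $\mathscr T_N$ has at most $q$ superdiagonals.

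\textbf{Lower bandwidth for $\mathscr T_N$.} In the second expression $\left[\begin{smallmatrix}\mathscr U_N^{-1} & U^{[N,N+p]}\end{smallmatrix}\right]\big(\Lambda^{[N,N+p]}_{[p]}\big)^\top\mathscr U_N$, the first factor is upper triangular in the rectangular sense: its $(i,l)$ entry vanishes for $l<i$, since the extra columns have index $\geq N>i$. The transposed shift sends $l=k+p$ to $k$. Finally, $(\mathscr U_N)_{kj}=0$ unless $k\le j$. A nonzero term therefore forces $i\le l=k+p\le j+p$, i.e. at most $p$ subdiagonals.

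\textbf{The rectangular matrices.} The same two-sided argument applies to $\mathscr T^{[N-q,N]}$ and $\mathscr T^{[N,N-p]}$. For $\mathscr T^{[N-q,N]}$, $\mathscr L_{N-q}$ and $\mathscr L_N^{-1}$ are lower triangular, giving $j\le i+q$. The block $\left[\begin{smallmatrix}\mathscr U_{N-q}^{-1}&U^{[N-q,N+p]}\end{smallmatrix}\right]$ vanishes below the main diagonal, since the columns of $U^{[N-q,N+p]}$ have indices $\ge N-q$. This gives $i\le j+p$. The matrix $\mathscr T^{[N,N-p]}$ is handled symmetrically, using $\mathscr U_N^{-1}\big(\Lambda^{[N-p,N]}_{[p]}\big)^\top\mathscr U_{N-p}$ for the lower bound and $\mathscr L_N\Lambda^{[N,N+q]}_{[q]}\left[\begin{smallmatrix}\mathscr L_{N-p}^{-1}\\ L^{[N+q,N-p]}\end{smallmatrix}\right]$ for the upper bound.

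\textbf{Main obstacle.} The only delicate step is checking the support of the bordered shift near the bottom-right corner when $s_r\neq 0$: one must confirm that the $I_{s_r,r}^\top$ block together with the bordering $I_r$ lies exactly on the diagonal $j=i+r$. This is exactly what the alternative description with $I_{r+s_r}$ in the corner shows. The second point to verify is that the appended blocks $L^{[\cdot]}$ and $U^{[\cdot]}$ sit entirely on the triangular side. This follows just from their index ranges, so no orthogonality relations are needed.
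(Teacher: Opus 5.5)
Your proof is correct and follows the paper's own argument, which just says the band structure follows directly from the two equal expressions in \eqref{eq:Ts} (equivalently \eqref{eq:shift_banda}). You make explicit the index bookkeeping behind it: the lower-triangular form bounds the superdiagonals by $q$, the upper-triangular form bounds the subdiagonals by $p$, and this uses that the bordered shift is supported exactly on $j=i+r$. If you also want the bandwidths to be attained rather than merely bounded, note that each entry on an outermost diagonal reduces to a single product of diagonal entries of the nonsingular triangular factors, so it is nonzero.
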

\begin{proof}
It directly follows from	Equations \eqref{eq:shift_banda}  or \eqref{eq:Ts}.
\end{proof}

We now discuss the relation between these three matrices. First, we need to introduce the following:
\begin{defi}
Let us consider the following lower and upper triangular matrices, respectively:
	\begin{align*}
		T^{[N-q,N]}&\coloneq\mathscr L_{[N-2q,N-q]}\mathscr L_{[N-q,N]}^{-1}=\int  B^{[N-q,q]}(x)\d\mu(x)A^{[N,q]}(x)\in\R^{q\times q},\\ 
		T^{[N,N-p]}&\coloneq\mathscr U_{[N,N-p]}^{-1}\mathscr U_{[N-p,N-2p]}=\int  B^{[N,p]}(x)\d\mu(x)A^{[N-p,p]}(x)\in\R^{p\times p},
	\end{align*}
\end{defi}

	\begin{pro}[Relation between recursion matrices]\label{pro:structure_T}
	The following block structure of the recursion matrices holds:
	\begin{align*}
		\mathscr	 T^{[N-q,N]}&=	\begin{bNiceArray}{cw{c}{3cm}cw{c}{1.8cm}c}[margin,cell-space-bottom-limit=10pt]
			\Block[borders=right]{5-3}<\huge>{\mathscr T_{N-q}}&&&\Block[]{3-2}<\Large>{0_{(N-2q)\times q}}&
			\\
			\\\\
			&&	&\Block[borders=top]{2-2}<\large>{T^{[N-q,N]}}&\\
			\\	\end{bNiceArray}, \\
		\mathscr	 T^{[N,N-p]}&=	\begin{bNiceArray}{cw{c}{2cm}w{c}{.75cm}w{c}{.5cm}c}[margin,cell-space-bottom-limit=10pt]
			\Block[borders=bottom]{5-5}<\huge>{\mathscr T_{N-p}}&&&&
			\\
			\\
			\\\\\\
			\Block[]{2-3}<\Large>{\hspace*{-1cm}0_{p\times (N-2p)}} &&\Block[c,borders=left]{2-2}<\large>{\hspace*{.7cm}T^{[N,N-p]}}&	\\
			\\	\end{bNiceArray}, 
	\end{align*}
\end{pro}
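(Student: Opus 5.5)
We prove the block structure of $\mathscr T^{[N-q,N]}$ by comparing its definition \eqref{eq:T1} with the definition \eqref{eq:T} of $\mathscr T_{N-q}$, using the lower block triangular form of $\mathscr L_N^{-1}$ in \eqref{eq:inverses_direct}. The argument for $\mathscr T^{[N,N-p]}$ is the mirror image, using $\mathscr U_N^{-1}$ and \eqref{eq:T2}.

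\emph{Left block.} From \eqref{eq:T1}, $\mathscr T^{[N-q,N]}=\mathscr L_{N-q}\Lambda^{[N-q,N]}_{[q]}\mathscr L_N^{-1}$. Split the columns of $\mathscr L_N^{-1}$ into the first $N-q$ and the last $q$, as in \eqref{eq:inverses_direct} and the Proposition on the block structure of triangular inverses. The first $N-q$ columns form
\[
\begin{bmatrix}\mathscr L_{N-q}^{-1}\\ L^{[N,N-q]}\end{bmatrix}.
\]
We identify the lower block $L^{[N,N-q]}$ with the matrix $L^{[(N-q)+q,N-q]}$ appearing in \eqref{eq:T} with $N$ replaced by $N-q$. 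By \eqref{eq:L}, both equal $\int x^{(N-q)_q}\mathfrak X_{[q,s_q]}\,\d\mu\,A^{[N-q]}$. This uses $(N-q)_q=N_q-1$ and the identity $x^{N_q-1}\mathfrak X_{[q,s_q]}$, i.e.\ the last $q$ rows of $X^{[N]}_{[q]}$, together with the fact that $A^{[N-q]}$ is the leading part of $A^{[N]}$. With this identification, the first $N-q$ columns of $\mathscr T^{[N-q,N]}$ are exactly $\mathscr T_{N-q}$, since $\Lambda^{[N-q,N]}_{[q]}$ is the bordered shift $\Lambda^{[N-q,(N-q)+q]}_{[q]}$.

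\emph{Last $q$ columns.} These columns equal $\mathscr L_{N-q}\Lambda^{[N-q,N]}_{[q]}\begin{bmatrix}0\\ \mathscr L_{[N-q,N]}^{-1}\end{bmatrix}$. By the second expression of the Proposition on bordered shift matrices, $\Lambda^{[N-q,N]}_{[q]}$ acts on the last $q$ coordinates only through its bottom-right identity block, and that block lies in the last $q$ rows. So the product before multiplying by $\mathscr L_{N-q}$ is $\begin{bmatrix}0\\ \mathscr L_{[N-q,N]}^{-1}\end{bmatrix}$, whose nonzero part sits in rows $N-2q+1,\dots,N-q$. Since $\mathscr L_{N-q}$ is lower triangular, multiplying by it gives zeros in the first $N-2q$ rows. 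The last $q$ rows equal $\mathscr L_{[N-2q,N-q]}\mathscr L_{[N-q,N]}^{-1}=T^{[N-q,N]}$.

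\emph{Integral form and the $\mathscr U$ case.} The integral expression for $T^{[N-q,N]}$ follows from biorthogonality, $\int B^{[N]}\d\mu A^{[N]}=\mathscr L_N\mathscr M_N\mathscr U_N=I$, by writing $T^{[N-q,N]}$ as the corresponding block of $\mathscr L_{N-q}\Lambda\mathscr M_N\mathscr U_N$ and using \eqref{eq:spectralX}. For $\mathscr T^{[N,N-p]}$ we argue symmetrically with the upper triangular $\mathscr U_N^{-1}$ and the transposed shift, and identify the top $N-p$ rows with $\mathscr T_{N-p}$ through \eqref{eq:U}.

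The main obstacle is the bookkeeping with the Euclidean divisions. We must check that the last $q$ rows of $X^{[N]}_{[q]}$ are exactly $x^{N_q-1}\mathfrak X_{[q,s_q]}$, including the case $s_q=0$, and that the remainder terms $L^{[N,N-q]}$ coincide with $L^{[N,N-q]}$ computed from the smaller truncation. The first thing to try is to verify this directly from the Proposition $X^{[N+r]}_{[r]}=\begin{bmatrix}X^{[N]}_{[r]}\\ x^{N_r}\mathfrak X_{[r,s_r]}\end{bmatrix}$ applied with $N-q$ in place of $N$.
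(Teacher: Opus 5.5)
Your proof is correct and follows the paper's argument. You split $\mathscr L_N^{-1}$ (resp.\ $\mathscr U_N^{-1}$) into block columns (resp.\ rows) and recognize the leading block as $\mathscr T_{N-q}$ (resp.\ $\mathscr T_{N-p}$) via \eqref{eq:T} at the smaller size. You then get the remaining block from the identity action of the shift on the last $q$ coordinates and the lower triangularity of $\mathscr L_{N-q}$. The one difference is that you land directly on $\mathscr L_{[N-2q,N-q]}\mathscr L_{[N-q,N]}^{-1}$, while the paper passes through the integral representation of $\mathscr L_{[N-q,N]}^{-1}$.

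A caution on your last paragraph: the computation you describe actually yields $T^{[N-q,N]}=\int x\,B^{[N-q,q]}(x)\,\d\mu(x)\,A^{[N,q]}(x)$, and likewise an extra factor $x$ for $T^{[N,N-p]}$. This is the correct integral form. The integral without $x$ written in the paper's definition is an off-diagonal block of $\int B^{[N]}\d\mu A^{[N]}=I_N$, so it vanishes, and you should not try to recover it literally.
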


\begin{proof}
	From Equation \eqref{eq:T} we get
\[
\begin{aligned}
	\mathscr T_{N-q}&=		\mathscr{L}_{N-q}	\Lambda^{[N-q,N]}_{[q]}\left[\begin{NiceArray}{c}[margin,cell-space-limits=2pt]
		\mathscr{L}_{N-q}^{-1} \\\hline L^{[N,N-q]}
	\end{NiceArray}\right],&
	\mathscr T_{N-p}&=		\begin{bNiceArray}{c|c}
		\mathscr{U}_{N-p}^{-1} &U^{[N-p,N]}
	\end{bNiceArray}\left(\Lambda^{[N-p,N]}_{[p]}\right)^\top 	\mathscr U_{N-p}.
\end{aligned}
\]
Using \eqref{eq:inverses_direct}, \eqref{eq:T1} and \eqref{eq:T2} we obtain
	\begin{align}
	\mathscr T^{[N-q,N]}&\coloneq\mathscr{L}_{N-q}	\Lambda^{[N-q,N]}_{[q]} \begin{bNiceArray}{c|c}[margin,cell-space-limits=2pt]
		\mathscr L_{N-q} ^{-1}& 0_{(N-q)\times q}\\\hline
		L^{[N,N-q]} &\mathscr L_{[N,N-q]}^{-1}
	\end{bNiceArray}=\begin{bNiceArray}{c|c}
	\mathscr T_{N-q} & 	\mathscr T^{[N-q,N,q]}
	\end{bNiceArray},\\
	\mathscr T^{[N,N-p]}&\coloneq
\begin{bNiceArray}{c|c}[margin,cell-space-limits=2pt]
\mathscr U_{N-p} ^{-1}& U^{[N-p,N]} \\\hline
0_{p\times(N-q)}& \mathscr U_{[N-p,N]}^{-1}
\end{bNiceArray}\left(\Lambda^{[N-p,N]}_{[p]}\right)^\top \mathscr{U}_{N-p} =\begin{bNiceArray}{c}[margin,cell-space-limits=2pt]
\mathscr T_{N-q}
\\\hline
\mathscr T^{[N,N-p,p]}
\end{bNiceArray},
\end{align}
with $\mathscr T^{[N-q,N,q]}\in\R^{(N-q)\times q}, \mathscr T^{[N,N-p,p]}\in\R^{p\times (N-p)}$ given by
\begin{align*}
	\mathscr T^{[N-q,N,q]}&= \mathscr{L}_{N-q}	\Lambda^{[N-q,N]}_{[q]}\begin{bNiceArray}{c}[margin,cell-space-limits=2pt]
		 0_{(N-q)\times q}\\\hline
	\bigintssss x^{N_q-1} \mathfrak{X}_{[q,s_q]}(x) \d\mu(x)A^{[N,q]}(x)
		\end{bNiceArray}\\&=\begin{bNiceArray}{c|c}[margin,cell-space-limits=2pt]
		\mathscr{L}_{N-2q} &0_{(N-2q)\times q}\\\hline
		\mathscr l^{[N-q,N-2q]} & \mathscr L_{[N-2q,N-q]}
		\end{bNiceArray}	\begin{bNiceArray}{c}[margin,cell-space-limits=2pt]
		0_{(N-2q)\times q}\\\hline
		\bigintssss x^{N_q-1} \mathfrak{X}_{[q,s_q]}(x) \d\mu(x)A^{[N,q]}(x)
		\end{bNiceArray}\\&=\begin{bNiceArray}{c}[margin,cell-space-limits=2pt]
		0_{(N-2q)\times q}\\\hline
		\bigintssss B^{[N-q,q]}(x)\d\mu(x)A^{[N,q]}(x)
		\end{bNiceArray},
		\\
			\mathscr T^{[N,N-p,p]}&=
			\begin{bNiceArray}{c|c}[margin,cell-space-limits=2pt]
				0_{p\times(N-p)}&\bigintsss  B^{[N,p]}(x)\d\mu(x)x^{N_p-1}\left(\mathfrak{X}_{[p,s_p]}(x)\right)^\top
			\end{bNiceArray} \left(\Lambda^{[N-p,N]}_{[p]}\right)^\top \mathscr{U}_{N-p}\\&=
			\begin{bNiceArray}{c|c}[margin,cell-space-limits=2pt]
				0_{p\times(N-2p)}&\bigintsss B^{[N,p]}(x)\d\mu(x) x^{N_p-1}\left(\mathfrak{X}_{[p,s_p]}(x)\right)^\top
			\end{bNiceArray} 	\begin{bNiceArray}{c|c}[margin,cell-space-limits=2pt]
			\mathscr U_{N-2p} & \mathscr u^{[N-2p,N-p]} \\\hline
			0_{p\times (N-2p)}	&\mathscr U_{[N-2p,N-p]}
			\end{bNiceArray}\\&=
				\begin{bNiceArray}{c|c}[margin,cell-space-limits=2pt]
				0_{p\times(N-2p)}&\bigintssss  B^{[N,p]}(x)\d\mu(x)A^{[N-p,p]}(x)
			\end{bNiceArray} .
\end{align*}
\end{proof}

\begin{pro}[Recursion relations]
	The rectangular banded matrices $ \mathscr T^{[N-q,N]}$ and $  \mathscr T^{[N,N-p]}$ model recursion relations for the  $B$ and $A$ polynomials, respectively, 
\[ \begin{aligned}
 \mathscr T^{[N-q,N]}		B^{[N]}(x)&= x	B^{[N-q]}(x),&
A^{[N]}(x)  \mathscr T^{[N,N-p]}	&= x	A^{[N-p]}(x).
 \end{aligned}\]
\end{pro}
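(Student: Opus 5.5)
The argument uses the first expressions in \eqref{eq:T1} and \eqref{eq:T2}, those written through a single triangular factor, together with the monomial spectrality \eqref{eq:spectralX}. The mixed expressions involving $U^{[N-q,N+p]}$ or $L^{[N+q,N-p]}$ are not needed here; they only serve to show the banded shape.

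\emph{The $B$ polynomials.} By \eqref{eq:T1} we have $\mathscr T^{[N-q,N]}=\mathscr L_{N-q}\Lambda^{[N-q,N]}_{[q]}\mathscr L_N^{-1}$, and by definition $B^{[N]}=\mathscr L_N X^{[N]}_{[q]}$. Hence
\[
\mathscr T^{[N-q,N]}B^{[N]}(x)=\mathscr L_{N-q}\Lambda^{[N-q,N]}_{[q]}X^{[N]}_{[q]}(x).
\]
Now apply \eqref{eq:spectralX} with $N$ replaced by $N-q$ and $r=q$. This gives $\Lambda^{[N-q,N]}_{[q]}X^{[N]}_{[q]}(x)=xX^{[N-q]}_{[q]}(x)$, so the right-hand side equals $x\mathscr L_{N-q}X^{[N-q]}_{[q]}(x)=xB^{[N-q]}(x)$.

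One point needs care: $\mathscr L_{N-q}$ must be the leading principal submatrix of $\mathscr L_N$, so that $B^{[N-q]}$ consists of the first $N-q$ rows of $B^{[N]}$. This is the choice fixed in Section~2, and the block form \eqref{eq:inverses_direct} is consistent with it.

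\emph{The $A$ polynomials.} This case is the transposed version. By \eqref{eq:T2} we have $\mathscr T^{[N,N-p]}=\mathscr U_N^{-1}\big(\Lambda^{[N-p,N]}_{[p]}\big)^\top\mathscr U_{N-p}$, and by definition $A^{[N]}=\big(X^{[N]}_{[p]}\big)^\top\mathscr U_N$. Therefore
\[
A^{[N]}(x)\mathscr T^{[N,N-p]}=\Big(\Lambda^{[N-p,N]}_{[p]}X^{[N]}_{[p]}(x)\Big)^\top\mathscr U_{N-p}=x\big(X^{[N-p]}_{[p]}(x)\big)^\top\mathscr U_{N-p}=xA^{[N-p]}(x),
\]
where the middle equality is \eqref{eq:spectralX} with $r=p$ and $N$ replaced by $N-p$. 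Here too $\mathscr U_{N-p}$ must be the leading principal block of $\mathscr U_N$, which is again the convention fixed earlier.

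The only real obstacle is bookkeeping. The indices in \eqref{eq:spectralX} must be shifted correctly, and the truncation sizes must match: $\Lambda^{[N-q,N]}_{[q]}$ is $(N-q)\times N$ and $X^{[N]}_{[q]}$ is $N\times q$. For $N$ not a multiple of $q$, the last partial block $x^{N_q}I_{s_q,q}$ is handled by the bordered form of $\Lambda^{[N-q,N]}_{[q]}$, and that form is exactly what \eqref{eq:spectralX} already encodes. So no further case analysis is needed.
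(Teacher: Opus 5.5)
Your proof is correct and is the direct verification the paper intends. The paper states the proposition without a written proof, as an immediate consequence of the defining expressions in \eqref{eq:T1} and \eqref{eq:T2} combined with the monomial spectrality \eqref{eq:spectralX}. Your caveat about leading principal submatrices is only needed to read $B^{[N-q]}$ and $A^{[N-p]}$ as truncations of $B^{[N]}$ and $A^{[N]}$; the identities themselves hold for any normalization, because the same $\mathscr L_{N-q}$ and $\mathscr U_{N-p}$ enter both the recursion matrices and the polynomials by definition.
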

 The square banded matrix $\mathscr T_N$ needs a correction to describe such recursion relations. For that aim we require of:
\begin{defi}
	Let us consider
	   $A^{[N]}=\begin{bNiceArray}{c|c}[small]
 	A^{[N-p]} & A^{[N,p]}
 \end{bNiceArray}$ and $B^{[N]}=\begin{bNiceMatrix}[small,cell-space-limits=1pt]
 	B^{[N-q]} \\ \hline B^{[N,q]}
 \end{bNiceMatrix}$, $A^{[N,p]}\in\R^{p\times p}[x],B^{[N,p]}\in\R^{q\times q}[x]$, with
  \[
 \begin{aligned}
 	B^{[N,q]}&=\begin{bNiceArray}{ccc}[margin,cell-space-limits=1pt]
 		B^{(1)}_{N-q}&\Cdots &B^{(q)}_{N-q}\\
 		B^{(1)}_{N-q+1}&\Cdots &B^{(q)}_{N-q+1}\\
 		\Vdots& &\Vdots\\\\
 		B^{(1)}_{N-1}&\Cdots &B^{(q)}_{N-1}\\
 	\end{bNiceArray}, &
 	A^{[N,p]}&=\begin{bNiceArray}{ccccc}[margin,cell-space-limits=2pt]
 		A^{(1)}_{N-p}&	A^{(1)}_{N-p+1}&\Cdots &&A^{(1)}_{N-1}\\
 		\Vdots	&\Vdots & &&\Vdots\\
 		A^{(p)}_{N-p}&	A^{(p)}_{N-p+1}&\Cdots &&A^{(p)}_{N-1}\\
 	\end{bNiceArray}.
 \end{aligned}
 \]
\end{defi}
 
\begin{coro}[Recursion relations]The recursion relations can be alternatively written as 
	 \[
 \begin{aligned}
\mathscr T_{N-q}B^{[N-q]}(x)+\begin{bNiceArray}{c}[margin,cell-space-limits=2pt]
	0_{(N-q)\times q }\\
	\hline
	T^{[N-q,N]}B^{[N,q]}(x)
\end{bNiceArray}&=xB^{[N-q]}(x), \\
A^{[N-p]}(x)\mathscr T_{N-p}+\begin{bNiceArray}{c|c}  0_{(N-p)\times p} &
	A^{[N,p]}(x)T^{[N,N-p]}
\end{bNiceArray}&=xA^{[N-p]}(x). 
 \end{aligned}
 \]
\end{coro}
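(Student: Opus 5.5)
The corollary follows by substituting the block decompositions of Proposition~\ref{pro:structure_T} into the recursion relations of the preceding proposition,
\[
\mathscr T^{[N-q,N]}B^{[N]}(x)=xB^{[N-q]}(x),\qquad A^{[N]}(x)\mathscr T^{[N,N-p]}=xA^{[N-p]}(x),
\]
and multiplying blockwise. No new analytic input is needed; the work is in aligning block sizes correctly.

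\textbf{First identity.} Proposition~\ref{pro:structure_T} gives the column splitting
\[
\mathscr T^{[N-q,N]}=\begin{bNiceArray}{c|c}\mathscr T_{N-q} & \mathscr T^{[N-q,N,q]}\end{bNiceArray},
\qquad
\mathscr T^{[N-q,N,q]}=\begin{bNiceArray}{c}0_{(N-2q)\times q}\\ \hline T^{[N-q,N]}\end{bNiceArray},
\]
where the last block comes from its proof. The column splitting of $\mathscr T^{[N-q,N]}$ into $N-q$ and $q$ columns matches the row splitting of $B^{[N]}$ into $B^{[N-q]}$ (the first $N-q$ rows) and $B^{[N,q]}$ (the last $q$ rows). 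Block multiplication then gives
\[
\mathscr T_{N-q}B^{[N-q]}(x)+\mathscr T^{[N-q,N,q]}B^{[N,q]}(x)=xB^{[N-q]}(x).
\]
The second term is $\begin{bNiceArray}{c}0\\ \hline T^{[N-q,N]}B^{[N,q]}(x)\end{bNiceArray}$, which is the first claimed identity.

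\textbf{Second identity.} This is the transposed analogue. Write $\mathscr T^{[N,N-p]}$ as $\mathscr T_{N-p}$ stacked above the $p\times(N-p)$ block $\begin{bNiceArray}{c|c}0_{p\times(N-2p)} & T^{[N,N-p]}\end{bNiceArray}$. Split $A^{[N]}=\begin{bNiceArray}{c|c}A^{[N-p]} & A^{[N,p]}\end{bNiceArray}$ compatibly with this row splitting. Multiplying gives
\[
A^{[N-p]}(x)\mathscr T_{N-p}+A^{[N,p]}(x)\begin{bNiceArray}{c|c}0 & T^{[N,N-p]}\end{bNiceArray}=xA^{[N-p]}(x),
\]
and the second term is $\begin{bNiceArray}{c|c}0_{(N-p)\times p}&A^{[N,p]}(x)T^{[N,N-p]}\end{bNiceArray}$, as claimed.

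\textbf{Main obstacle: block bookkeeping.} Proposition~\ref{pro:structure_T} as printed writes $\mathscr T_{N-q}$ where $\mathscr T_{N-p}$ is meant, and the paper's notation $A^{[N,q]},B^{[N,p]}$ versus $A^{[N,p]},B^{[N,q]}$ is overloaded. I would verify three points.
\begin{itemize}
\item In the first identity, the matrix multiplying $T^{[N-q,N]}$ must be the $q$-row block of $B$, namely $B^{[N,q]}$ as in the last definition.
\item The integral $\int B^{[N-q,q]}\,\d\mu\, A^{[N,q]}$ appearing in the proof of Proposition~\ref{pro:structure_T} must coincide with the lower-triangular $T^{[N-q,N]}$ from the definition. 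This follows from $\mathscr L_{N-q}\Lambda^{[N-q,N]}_{[q]}$ acting on the block $\int x^{N_q-1}\mathfrak X_{[q,s_q]}\,\d\mu\,A^{[N,q]}$ as computed there.
\item Symmetrically, in the second identity the $p$ columns of $A$ multiplying $T^{[N,N-p]}$ must be $A^{[N,p]}$.
\end{itemize}
Once these size checks are settled, the corollary is immediate.
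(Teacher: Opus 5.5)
Your proposal is correct and follows the argument the paper intends (the paper gives no separate proof): substitute the block decompositions of Proposition~\ref{pro:structure_T} into $\mathscr T^{[N-q,N]}B^{[N]}=xB^{[N-q]}$ and $A^{[N]}\mathscr T^{[N,N-p]}=xA^{[N-p]}$, then multiply blockwise. Two bookkeeping fixes are needed: the block you identify with $T^{[N-q,N]}$ is $\mathscr L_{[N-2q,N-q]}\mathscr L_{[N-q,N]}^{-1}=\int xB^{[N-q,q]}\d\mu\,A^{[N,q]}$ (the printed integral lacks the factor $x$ and actually vanishes, since $\int B^{[N]}\d\mu\,A^{[N]}=I_N$), and the zero blocks in the two identities should be $0_{(N-2q)\times q}$ and $0_{p\times(N-2p)}$, as your own block multiplication shows, rather than the printed $0_{(N-q)\times q}$ and $0_{(N-p)\times p}$.
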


 \section{Christoffel transformations and bidiagonal factorizations}
 
 In this section we discuss bidiagonal factorizations of the recursion matrices $\mathscr T_N$. 
\begin{defi}[Bidiagonal factorization]
	 Given bidiagonal matrices of the form
 \[
 \begin{aligned}
 		U_b&=\begin{bNiceArray}{w{c}{20pt}w{c}{20pt}cccw{c}{30pt}}[cell-space-limits=0pt,margin]
 			U_{b,0} & 1&0&\Cdots&& 0\\
 			0&U_{b,1}  & 1&0&& \\
 			\Vdots&\Ddots &\Ddots[shorten-end=-5pt]&\Ddots&\Ddots&\Vdots\\[0pt]&&&&&0\\
 			&&&&&1\\
 			0&\Cdots&&&0&U_{b,N-1} 
 		\end{bNiceArray}, & b&\in\{1,\dots,q\},\\
 		L_a&=\begin{bNiceArray}{cccccc}[,margin]
 			1&0&\Cdots&&&0\\[-5pt]
 			L_{a,1} & 1&0&&& \Vdots\\
 			0&L_{a,2}& 1&0&& \\
 			\Vdots&\Ddots &\Ddots[shorten-end=-15pt]&\Ddots&\Ddots&\\
 			&&&&&0\\
 			0&\Cdots&&0&L_{a,N-1}&1
 		\end{bNiceArray}, & a&\in\{1,\dots,p\}.
 	\end{aligned}
 	\]
 we say that a monic banded matrix $\mathscr T_N$, with $q$ superdiagonal and $p$ subdiagonals and with the highest superdiagonal normalized with $1$'s,  admits a bidiagonal factorization if it can written as 
  \[
 \mathscr T_N=L_1\cdots L_pU_q\cdots U_1.
 \]
\end{defi}
 In this section we discuss this possibility and find conditions for the existence of  such factorization, and also expressions for its factors. We will do it considering simple Christoffel transformations  of the matrix of measures $\d\mu$.
 
 For a more clear presentation, within this section we  consider that   $\mathscr L_N$ is lower unitriangular. 
 To start with, we need truncations of the basic shift matrix $\Lambda$:
\begin{defi}[Truncated basic shift matrices]
	For $i\in\{0,1,\dots,r-1\}$, let us define
 \[
 \Lambda^{[N+i,N+i+1]}\coloneq \begin{bNiceArray}{ccccc|c}[first-row, last-col,margin]
 	\Hbrace{6}{N+i+1}\\
 	0& 1&0&\Cdots &0&0&\Vbrace{5}{N+i}\\
 	\Vdots &\Ddots&\Ddots&\Ddots &\Vdots&\Vdots\\
 	&&& &0&0\\
 	&&&&1&0\\
 	0&\Cdots&&&0&1\\
 \end{bNiceArray}\hspace*{.7cm}.
 \]
\end{defi}
Similar to the block case we have a spectral property on monomials:
\begin{pro}
	 	The action of these truncated basic shift matrices on the monomial vectors is
 	\[
 	\Lambda^{[N+r-1,N+r]}X_{[q]}^{[N+r]}=X_{[r]}^{[N+r-1]}\mathfrak X_{[r,1]}(x).
 	\]
\end{pro}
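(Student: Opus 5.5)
The plan is a direct row-by-row comparison of the two sides. (I read the left-hand factor as $X_{[r]}^{[N+r]}$, since both the shift size and $\mathfrak X_{[r,1]}$ refer to $r$.) First I identify what the truncated basic shift matrix does. Put $M\coloneq N+r-1$. Then $\Lambda^{[M,M+1]}\in\R^{M\times(M+1)}$ has ones exactly at the positions $(j,j+1)$, for $j\in\{0,\dots,M-1\}$ indexing from $0$, and zeros elsewhere. Multiplying any $(M+1)$-row matrix on the left by $\Lambda^{[M,M+1]}$ therefore deletes its first row and keeps rows $1,\dots,M$. So the claim is equivalent to this statement: row $j$ of the right-hand side equals row $j+1$ of $X_{[r]}^{[N+r]}$, for every $j\in\{0,\dots,N+r-2\}$.

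Next I write the rows of the monomial matrix explicitly. Let $e_0,\dots,e_{r-1}$ be the canonical basis of $\R^r$. From the definition of $X_{[r]}$ and its truncations, including the partial last block $x^{N_r}I_{s_r,r}$, row $k$ of $X^{[K]}_{[r]}$ is $x^{\lfloor k/r\rfloor}e_{k \bmod r}^\top$ for every $k<K$. The point is that the truncation only cuts off rows and does not alter them. Hence row $j+1$ of the left-hand side is $x^{\lfloor (j+1)/r\rfloor}e^\top_{(j+1)\bmod r}$. Row $j$ of the right-hand side is $x^{\lfloor j/r\rfloor}e^\top_{j\bmod r}\,\mathfrak X_{[r,1]}(x)$.

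Then I compute the action of $\mathfrak X_{[r,1]}$ on row vectors from its definition with $s=1$. Its top $r-1$ rows are $[\,0_{(r-1)\times1}\mid I_{r-1}\,]$ and its last row is $[\,x\ 0\cdots0\,]$. This gives
\[
e_i^\top\mathfrak X_{[r,1]}=e_{i+1}^\top \quad (0\le i\le r-2),\qquad e_{r-1}^\top\mathfrak X_{[r,1]}=x\,e_0^\top .
\]
I finish by splitting into two cases on $i=j\bmod r$.
\begin{itemize}
\item If $i<r-1$, then $\lfloor (j+1)/r\rfloor=\lfloor j/r\rfloor$ and $(j+1)\bmod r=i+1$, so both rows equal $x^{\lfloor j/r\rfloor}e_{i+1}^\top$.
\item If $i=r-1$, then $j+1$ is a multiple of $r$, so $\lfloor (j+1)/r\rfloor=\lfloor j/r\rfloor+1$ and $(j+1)\bmod r=0$; both rows equal $x^{\lfloor j/r\rfloor+1}e_0^\top$.
\end{itemize}

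There is no real obstacle here. The only step that needs care is the bookkeeping for the incomplete final block of size $s_r$. I handle it by using the uniform row formula $x^{\lfloor k/r\rfloor}e^\top_{k\bmod r}$, which is valid for every row index that is present, rather than the block description.
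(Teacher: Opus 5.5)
Your proof is correct. Reading the left factor as $X_{[r]}^{[N+r]}$ is forced by the dimensions, and it is how the identity is used (with $r=q$) in the proof of Theorem~\ref{th:Bidiagonal_Factorization}; the row formula $x^{\lfloor k/r\rfloor}e_{k\bmod r}^\top$ together with $e_i^\top\mathfrak X_{[r,1]}=e_{i+1}^\top$ and $e_{r-1}^\top\mathfrak X_{[r,1]}=x\,e_0^\top$ then settles it. The paper states this proposition without proof, as an immediate verification, and your row-by-row check is exactly that verification made explicit.
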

 
 Importantly, we have that:
\begin{pro}\label{pro:TNr}
	  We can factor out the matrix $\Lambda^{[N,N+r]}_{[r]}$  as follows
 \[
 \Lambda^{[N,N+r]}_{[r]}=  \Lambda^{[N,N+1]}\Lambda^{[N+1,N+2]}\cdots \Lambda^{[N+r-1,N+r]}.
 \]
\end{pro}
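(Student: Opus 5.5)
The plan is to compare entries directly. Both sides are $N\times(N+r)$ matrices, and I expect each factor to be a pure shift, so the product of shifts should be the shift by $r$.

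\emph{Step 1: entries of the left-hand side.} I will read off the entries of $\Lambda^{[N,N+r]}_{[r]}$ from the second expression for the bordered truncated shift matrices given above.

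The upper part consists of the first $N-r-s_r$ rows, built from the blocks $0_r, I_r,0_r,\dots$. There, row $i$ (indices starting at $0$) has a single $1$, in column $i+r$.

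The lower-right block $I_{r+s_r}$ occupies rows $N-r-s_r,\dots,N-1$ and the last $r+s_r$ columns, namely columns $N-s_r,\dots,N+r-1$. Its $k$-th row sits in row $i=N-r-s_r+k$ and has its $1$ in column $N-s_r+k=i+r$.

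Hence in every row the only nonzero entry is a $1$ in column $i+r$:
\[
\big(\Lambda^{[N,N+r]}_{[r]}\big)_{i,j}=\delta_{j,i+r},\qquad 0\le i\le N-1,\ 0\le j\le N+r-1.
\]
This is the step needing most care, since the first block description (with $I_{s_r,r}^\top$) is awkward to read. I will rely on the second description and cross-check it against the spectral relation \eqref{eq:spectralX}: row $i$ must pick out the entry of $X^{[N+r]}_{[r]}$ that equals $x$ times the $i$-th entry of $X^{[N]}_{[r]}$, and that entry is exactly the one with index $i+r$.

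\emph{Step 2: entries of each factor.} The same reading of the definition of the truncated basic shift matrices gives, for the $(M\times(M+1))$ matrix with $M=N+i$,
\[
\big(\Lambda^{[M,M+1]}\big)_{i',j}=\delta_{j,i'+1}.
\]
This includes the last row, whose $1$ sits in the last column.

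\emph{Step 3: induction on $r$.} Take $r=1$ as the base case; it is Step 2 with $M=N$. For the inductive step, suppose
\[
\big(\Lambda^{[N,N+1]}\cdots\Lambda^{[N+r-2,N+r-1]}\big)_{i,j}=\delta_{j,i+r-1},
\]
a matrix of size $N\times(N+r-1)$. Multiplying on the right by $\Lambda^{[N+r-1,N+r]}$ gives
\[
\sum_{j=0}^{N+r-2}\delta_{j,i+r-1}\,\delta_{k,j+1}=\delta_{k,i+r}.
\]
The sum has no boundary loss because $i+r-1\le N+r-2$ for every $i\le N-1$. By Step 1, $\delta_{k,i+r}$ is exactly the entry of $\Lambda^{[N,N+r]}_{[r]}$, which closes the induction and proves the factorization.
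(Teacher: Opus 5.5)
Your proof is correct. The paper states this proposition without proof, treating it as immediate from the definitions, and your entrywise computation supplies exactly the check it leaves implicit: both sides have entries $\delta_{j,i+r}$, and the product of the basic shifts has no boundary loss since $i+r-1\le N+r-2$.

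The only delicate point is Step~1, which rests on decoding the block pictures of $\Lambda^{[N,N+r]}_{[r]}$. You can make it independent of them. The rows $x^k e_j^\top$ of $X^{[N+r]}_{[r]}(x)$ are linearly independent over $\mathbb{R}$, so the spectral relation $\Lambda^{[N,N+r]}_{[r]}X^{[N+r]}_{[r]}=xX^{[N]}_{[r]}$ determines the $N\times(N+r)$ matrix uniquely. With that observation, your cross-check becomes a proof of Step~1.

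The same uniqueness gives a route closer to the paper's set-up, where the statement comes right after the action of the basic shifts on monomials. For general size $M$ that action reads $\Lambda^{[M,M+1]}X^{[M+1]}_{[r]}=X^{[M]}_{[r]}\mathfrak X_{[r,1]}$. Applying the $r$ factors in turn yields $X^{[N]}_{[r]}\mathfrak X_{[r,1]}^r=X^{[N]}_{[r]}\mathfrak X_{[r,r]}=xX^{[N]}_{[r]}$, which is precisely the relation characterizing $\Lambda^{[N,N+r]}_{[r]}$.

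That version explains the factorization through the algebra of $\mathfrak X_{[r,1]}$. This mirrors how the factorization is used later, where each basic shift acting on a moment matrix produces one Christoffel step by $\mathfrak X_{[q,1]}$. Your version is more elementary and fully explicit.
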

 We now give a rewriting of the equation defining the recursion matrices:
\begin{pro}
	The recursion matrices $\mathscr T_N$ satisfy
 \begin{align}\label{eq:T_bis_1}
 		\mathscr T_{N}&=		\mathscr{L}_{N}	\Lambda^{[N,N+1]}\Lambda^{[N+1,N+2]}\cdots \Lambda^{[N+q-1,N+q]}\mathscr M^{[N+q,N]}\mathscr U_N\\\label{eq:T_bis_2}
 		&=
 		\mathscr L_N \mathscr M^{[N, N+p]}
\left( \Lambda^{[N+p-1,N+p]} \right)^\top\cdots\left(\Lambda^{[N+1,N+2]}\right)^\top	\left(\Lambda^{[N,N+1]}\right)^\top\mathscr U_{N}.
 \end{align}
\end{pro}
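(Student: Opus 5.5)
The plan is to start from the definition of $\mathscr T_N$ in Equation \eqref{eq:T}, in both of its forms, and replace the block shift matrices using Proposition~\ref{pro:TNr}.

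For \eqref{eq:T_bis_1}, take the first expression in \eqref{eq:T},
\[
\mathscr T_N=\mathscr L_N\Lambda^{[N,N+q]}_{[q]}\left[\begin{NiceArray}{c}\mathscr L_N^{-1}\\\hline L^{[N+q,N]}\end{NiceArray}\right].
\]
By the Gauss--Borel block relations of the proposition preceding Equation \eqref{eq:L}, we have
\[
\left[\begin{NiceArray}{c}\mathscr L_N^{-1}\\\hline L^{[N+q,N]}\end{NiceArray}\right]=\mathscr M^{[N+q,N]}\mathscr U_N .
\]
Substituting this gives $\mathscr T_N=\mathscr L_N\Lambda^{[N,N+q]}_{[q]}\mathscr M^{[N+q,N]}\mathscr U_N$. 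Applying Proposition~\ref{pro:TNr} with $r=q$ then factors $\Lambda^{[N,N+q]}_{[q]}$ into the ordered product $\Lambda^{[N,N+1]}\cdots\Lambda^{[N+q-1,N+q]}$, which is exactly \eqref{eq:T_bis_1}.

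For \eqref{eq:T_bis_2}, take the second expression in \eqref{eq:T}. The relation $\mathscr M^{[N,N+p]}=\mathscr L_N^{-1}\begin{bNiceArray}{c|c}\mathscr U_N^{-1}&U^{[N,N+p]}\end{bNiceArray}$ gives
\[
\mathscr T_N=\mathscr L_N\mathscr M^{[N,N+p]}\bigl(\Lambda^{[N,N+p]}_{[p]}\bigr)^\top\mathscr U_N .
\]
Apply Proposition~\ref{pro:TNr} with $r=p$ and transpose. Transposition reverses the order of the factors, so we get
\[
\bigl(\Lambda^{[N,N+p]}_{[p]}\bigr)^\top=\bigl(\Lambda^{[N+p-1,N+p]}\bigr)^\top\cdots\bigl(\Lambda^{[N,N+1]}\bigr)^\top,
\]
which yields \eqref{eq:T_bis_2}.

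The only substantive point is Proposition~\ref{pro:TNr}, which is assumed here. If one wants to double-check it, the factorization can be verified by applying both sides to $X^{[N+r]}_{[r]}$ and using the monomial spectrality \eqref{eq:spectralX}. Alternatively, one can compute the product of the truncated basic shifts directly. Each factor $\Lambda^{[N+i,N+i+1]}$ shifts indices by one, except in its last row, where the identity is kept. So $r$ successive factors shift by $r$ on the first $N-r-s_r$ rows and place an identity block $I_{r+s_r}$ in the bottom-right corner. This matches the second description of the bordered shift matrix given earlier.

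I expect the main obstacle to be purely bookkeeping: checking the sizes in this product (the factor $\Lambda^{[N+i,N+i+1]}$ is $(N+i)\times(N+i+1)$) and confirming that the bottom-right corner block comes out as $I_{r+s_r}$. Everything else is a direct substitution.
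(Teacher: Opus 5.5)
Your proposal is correct and matches the paper's own proof. The paper also rewrites both expressions in \eqref{eq:T}, via the Gauss--Borel relations, as $\mathscr L_N\Lambda^{[N,N+q]}_{[q]}\mathscr M^{[N+q,N]}\mathscr U_N$ and $\mathscr L_N\mathscr M^{[N,N+p]}\bigl(\Lambda^{[N,N+p]}_{[p]}\bigr)^\top\mathscr U_N$. It then factors the block shifts with Proposition~\ref{pro:TNr}, transposing (which reverses the order of the factors) for the second identity.
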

\begin{proof}
Rewrite \eqref{eq:T} as follows
\begin{equation}\label{eq:T_bis}
	\begin{aligned}
		\mathscr T_{N}&=		\mathscr{L}_{N}	\Lambda^{[N,N+q]}_{[q]}\mathscr M^{[N+q,N]}\mathscr U_N
		=
		\mathscr L_N \mathscr M^{[N, N+p]}
		\left(\Lambda^{[N,N+p]}_{[p]}\right)^\top 	\mathscr U_{N},
	\end{aligned}
\end{equation}
and Proposition \ref{pro:TNr}.
\end{proof}
 Now, we introduce some basic Christoffel type perturbations of the matrix of measures. Namely perturbations by left or right multiplication by the  first degree matrix polynomials $\mathfrak X_{[r,1]}$:
\begin{defi}[Christoffel perturbed matrix of measures]
	 Let us define
 \[
 \d\mu_{(n,m)}\coloneq\mathfrak X_{[q,1]}^n\d\mu \left(\mathfrak X_{[p,1]}^m\right)^\top,
 \]
 and the corresponding moment matrices
 \[
 \mathscr M^{[N,M]}_{(n,m)}=\int X^{[N]}(x)\d\mu_{(n,m)}(x)\left(X^{[M]}(x)\right)^\top.
 \]
\end{defi}

\begin{rem}
	Observe that 
\[
x\d\mu=\d\mu^{(p,0)}=\d\mu^{(0,q)},
\]
and the Christoffel transformation coincides.
\end{rem}

\begin{teo}\label{th:Bidiagonal_Factorization}
	The recursion matrix $\mathscr T_N$ admits a bidiagonal factorization 
	\[
	 \mathscr T_N=L_1\cdots L_pU_q\cdots U_1,
	\]
	if all the moment matrices $\mathscr M_N$, $\mathscr M_{N,(b,0)}$, $b\in\{1,\dots,q\}$, and $\mathscr M_{N,(0,a)}$, $a\in\{1,\dots,p\}$, admit a Gauss--Borel factorization. The bidiagonal factors are given by 
\begin{equation}\label{eq:UL}
	\begin{aligned}
		U_b&\coloneq  \mathscr U_{N,(b,0)}^{-1}\mathscr U_{N,(b-1,0)}=\begin{bNiceArray}{w{c}{20pt}w{c}{20pt}cccw{c}{30pt}}[cell-space-limits=0pt,margin]
			U_{b,0} & 1&0&\Cdots&& 0\\
			0&U_{b,1}  & 1&0&& \\
			\Vdots&\Ddots &\Ddots[shorten-end=-8pt]&\Ddots&\Ddots&\Vdots\\[0pt]&&&&&0\\
			&&&&&1\\
			0&\Cdots&&&0&U_{b,N-1} 
		\end{bNiceArray}, & b&\in\{1,\dots,q\},\\
		L_a&\coloneq  \mathscr L_{N,(0,a-1)}\mathscr L_{N,(0,a)}^{-1}
		=\begin{bNiceArray}{cccccc}[,margin]
			1&0&\Cdots&&&0\\[-5pt]
			L_{a,1} & 1&0&&& \Vdots\\
			0&L_{a,2}& 1&0&& \\
			\Vdots&\Ddots &\Ddots[shorten-end=-15pt]&\Ddots&\Ddots&\\
			&&&&&0\\
			0&\Cdots&&0&L_{a,N-1}&1
		\end{bNiceArray}, & a&\in\{1,\dots,p\}.
	\end{aligned}
\end{equation}
\end{teo}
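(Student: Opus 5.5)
The plan is to peel off the shifts in \eqref{eq:T_bis_1} one at a time. Each basic shift $\Lambda^{[N+b-1,N+b]}$ acting on the left of a moment matrix turns into a one-step Christoffel perturbation. We then insert the Gauss--Borel factors of the perturbed moment matrices, and read off the telescoping products of triangular quotients. The key identity comes from the monomial relation $\Lambda^{[N+r-1,N+r]}X^{[N+r]}_{[q]}=X^{[N+r-1]}_{[q]}\mathfrak X_{[q,1]}$ (with $r=q$, shifted truncations analogously). It gives
\[
\Lambda^{[N+b-1,N+b]}\mathscr M^{[N+b,N]}_{(q-b,0)}=\mathscr M^{[N+b-1,N]}_{(q-b+1,0)} ,
\]
and, symmetrically on the right,
\[
\mathscr M^{[N,N+a]}_{(0,p-a)}\bigl(\Lambda^{[N+a-1,N+a]}\bigr)^\top=\mathscr M^{[N,N+a-1]}_{(0,p-a+1)}.
\]
I would first establish these relations, possibly reindexing so that the perturbation degree $b$ matches the number of shifts already absorbed. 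Iterating them recovers $\Lambda^{[N,N+q]}_{[q]}\mathscr M^{[N+q,N]}=\mathscr M^{[N,N]}_{(q,0)}=\mathscr M_{N,(q,0)}$, consistent with $x\,\d\mu=\d\mu_{(q,0)}=\d\mu_{(0,p)}$.

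Next I would use the existence of Gauss--Borel factorizations $\mathscr M_{N,(b,0)}=\mathscr L_{N,(b,0)}^{-1}\mathscr U_{N,(b,0)}^{-1}$ and compare consecutive perturbations. The relation $\Lambda^{[N-1,N]}X^{[N]}=X^{[N-1]}\mathfrak X_{[q,1]}$, restricted to square truncations, says that $\mathscr M_{N,(b,0)}$ is obtained from $\mathscr M^{[N+1,N]}_{(b-1,0)}$ by deleting its first row. From this I would show that
\[
\mathscr L_{N,(b,0)}\,\Lambda^{[N,N+1]}\begin{bmatrix}\mathscr L_{N,(b-1,0)}^{-1}\\ \ast\end{bmatrix}
=\mathscr U_{N,(b,0)}^{-1}\mathscr U_{N,(b-1,0)} .
\]
Here the left-hand side is lower triangular plus one superdiagonal. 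The right-hand side is upper triangular. Hence both sides are upper bidiagonal, and the normalization ($\mathscr L$ unitriangular) forces the superdiagonal to be $1$'s. This proves that $U_b$ has the shape claimed in \eqref{eq:UL}. The same argument with the roles of $\mathscr L$ and $\mathscr U$ exchanged, applied to the right shifts of \eqref{eq:T_bis_2}, shows that $L_a=\mathscr L_{N,(0,a-1)}\mathscr L_{N,(0,a)}^{-1}$ is unit lower bidiagonal.

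Then I would assemble the factorization. Telescoping gives
\[
U_q\cdots U_1=\mathscr U_{N,(q,0)}^{-1}\mathscr U_N,\qquad
L_1\cdots L_p=\mathscr L_N\mathscr L_{N,(0,p)}^{-1} .
\]
Since $\mathscr M_{N,(q,0)}=\mathscr M_{N,(0,p)}$ is the moment matrix of $x\,\d\mu$, uniqueness of the Gauss--Borel factorization (with unitriangular $\mathscr L$) identifies $\mathscr L_{N,(0,p)}=\mathscr L_{N,(q,0)}$ and $\mathscr U_{N,(0,p)}=\mathscr U_{N,(q,0)}$. Therefore
\[
L_1\cdots L_pU_q\cdots U_1=\mathscr L_N\,\mathscr M_{N,(q,0)}\,\mathscr U_N=\mathscr L_N\Bigl(\textstyle\int X^{[N]}x\,\d\mu\,(X^{[N]})^\top\Bigr)\mathscr U_N .
\]
By \eqref{eq:T_bis} and the spectral relation \eqref{eq:spectralX}, this equals $\mathscr T_N$.

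The main obstacle is the second step, namely rigorously proving that each quotient $\mathscr U_{N,(b,0)}^{-1}\mathscr U_{N,(b-1,0)}$ is bidiagonal with unit superdiagonal. This is where the finite truncation bites: one needs $\mathscr M^{[N+1,N]}_{(b-1,0)}$, one row beyond what the factorization of $\mathscr M_{N,(b-1,0)}$ controls, exactly as with $L^{[N+q,N]}$ in the proof of \eqref{eq:L}. I would handle it by writing $\mathscr M^{[N+1,N]}_{(b-1,0)}$ as $\mathscr L_{N,(b-1,0)}^{-1}$ with one extra row appended, times $\mathscr U_{N,(b-1,0)}^{-1}$, and checking the lower-plus-one-superdiagonal versus upper-triangular squeeze entrywise.
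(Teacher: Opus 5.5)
Your proposal is correct and follows essentially the paper's route: each basic shift realizes a one-step Christoffel perturbation by $\mathfrak X_{[q,1]}$ (resp.\ $\mathfrak X_{[p,1]}^\top$), the bidiagonal shape of $U_b$ and $L_a$ comes from squeezing a Hessenberg matrix against a triangular one, and the factors telescope. The differences are only organizational. You use the one-row extension $\mathscr M^{[N+1,N]}_{(b-1,0)}$ at each level instead of the paper's tall matrices $\mathscr M^{[N+q-b+1,N]}_{(b-1,0)}$, and you make explicit the identification $\mathscr L_{N,(q,0)}=\mathscr L_{N,(0,p)}$, via $\d\mu_{(q,0)}=\d\mu_{(0,p)}=x\,\d\mu$ and uniqueness of the unitriangular Gauss--Borel factorization, which the paper's closing appeal to normalization leaves implicit.
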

 \begin{proof}
 	We start the proof by finding the bidiagonal factors $U_n$.
  Firstly,  we note that
 \[
 \Lambda^{[N+q-1,N+q]}\mathscr M^{[N+q,N]}=\int X^{[N+q-1]}(x)\mathfrak X_{[q]}(x)\d\mu(x)\left(X^{[N]}_{[p]}(x)\right)^\top=\mathscr M_{(1,0)}^{[N+q-1,N]}
 \]
 and 
 \begin{align}\label{eq:bidiagonal_1}
 	\Lambda^{[N+q-1,N+q]}\mathscr M^{[N+q,N]}\mathscr U_N=\Lambda^{[N+q-1,N+q]}\begin{bNiceArray}{c}[margin,cell-space-limits=2pt]
 	\mathscr L_N^{-1}\\\hline L^{[N+q,N]}
 \end{bNiceArray} =\begin{bNiceArray}{c}[margin,cell-space-limits=2pt]
 \mathscr L_{N,(1,0)}^{-1}\\\hline L^{[N+q-1,N]}_{(1,0)}
 \end{bNiceArray}\mathscr U_{N,(1,0)}^{-1}\mathscr U_N.
 \end{align}
 Now, the matrix $\Lambda^{[N+q-1,N+q]}\begin{bNiceArray}{c}[margin,small]
 	\mathscr L_N^{-1}\\\hline L^{[N+q,N]}
 \end{bNiceArray} $ is a upper Hessenberg matrix, that assuming that $\mathscr L_N$ is monic, is also monic. Therefore,  
 \[
 U_1\coloneq \mathscr U_{N,(1,0)}^{-1}\mathscr U_N
 \]
 is a monic upper bidiagonal matrix (with its first superdiagonal being monic).

 Now we consider
 \[
\begin{aligned}
	 \Lambda^{[N+q-2,N+q-1]} \Lambda^{[N+q-1,N+q]}\mathscr M^{[N+q,N]}\mathscr U_N&=  \Lambda^{[N+q-2,N+q-1]}\mathscr M_{(1,0)}^{[N+q-1,N]}\mathscr U_N\\&=
	 \Lambda^{[N+q-2,N+q-1]}\mathscr M_{(1,0)}^{[N+q-1,N]}\mathscr U_{N,(1,0)}U_1
\end{aligned}
 \]
 and using \eqref{eq:bidiagonal_1} for $\d\mu_{(1,0)}$ and $N+q\to N+q-1$ we find
  \begin{align}\label{eq:bidiagonal_2}
 	\Lambda^{[N+q-2,N+q-1]}\mathscr M^{[N+q-1,N]}_{(1,0)}\mathscr U_{N,(1,0)}=\Lambda^{[N+q-2,N+q-1]}\begin{bNiceArray}{c}[margin,cell-space-limits=2pt]
 		\mathscr L_{N,(1,0)}^{-1}\\\hline L^{[N+q-1,N]}_{(1,0)}
 	\end{bNiceArray} =\begin{bNiceArray}{c}[margin,cell-space-limits=2pt]
 		\mathscr L_{N,(2,0)}^{-1}\\\hline L^{[N+q-2,N]}_{(2,0)}
 	\end{bNiceArray}\mathscr U_{N,(2,0)}^{-1}\mathscr U_{N,(1,0)}.
 \end{align}
  Hence, 
   \[
 U_2\coloneq \mathscr U_{N,(2,0)}^{-1}\mathscr U_{N,(1,0)}
 \]
 is a monic upper bidiagonal matrix and
 \begin{align}\label{eq:U2}
 	\Lambda^{[N+q-2,N+q-1]} \Lambda^{[N+q-1,N+q]}\mathscr M^{[N+q,N]}\mathscr U_N&=  \begin{bNiceArray}{c}[margin,cell-space-limits=2pt]
 		\mathscr L_{N,(2,0)}^{-1}\\\hline L^{[N+q-2,N]}_{(2,0)}
 	\end{bNiceArray}U_2U_1.
 \end{align}

  Repeating this process $q$ times we find 
  \[
  \Lambda^{[N,N+1]}\Lambda^{[N+1,N+2]}\cdots \Lambda^{[N+q-1,N+q]}\mathscr M^{[N+q,N]}\mathscr U_N=\mathscr L_{N,(q,0)}^{-1} U_q\cdots U_1\]
   where 
  \[
  \begin{aligned}
  	U_b&\coloneq  \mathscr U_{N,(b,0)}^{-1}\mathscr U_{N,(b-1,0)}=\begin{bNiceArray}{w{c}{20pt}w{c}{20pt}cccw{c}{30pt}}[cell-space-limits=0pt,margin]
  	U_{b,0} & 1&0&\Cdots&& 0\\
  		0&U_{b,1}  & 1&0&& \\
  		\Vdots&\Ddots &\Ddots[shorten-end=-7pt]&\Ddots&\Ddots&\Vdots\\[0pt]&&&&&0\\
  			&&&&&1\\
  		0&\Cdots&&&0&U_{b,N-1} 
  	\end{bNiceArray}, & b&\in\{1,\dots,q\}.
  \end{aligned}
  \]
so that
 \[
 \mathscr{T}_N=\mathscr L_N\mathscr L_{N,(q,0)}^{-1} U_q\cdots U_1.
 \]

We now find the lower bidiagonal factors $L_m$.
 Following an similar procedure  as above we get
	\[ \mathscr M^{[N, N+p]}
 \left( \Lambda^{[N+p-1,N+p]} \right)^\top=\int X^{[N]}(x) \d\mu(x)\left(\mathfrak X_{[q,1]}(x)\right)^\top\left(X^{[N+p-1]}_{[p]}(x)\right)^\top=\mathscr M_{(0,1)}^{[N,N+p-1]} 
\]
 and also that
\begin{equation}\label{eq:ML1}
\begin{aligned}
	 \mathscr L_N \mathscr M^{[N, N+p]}
 \left( \Lambda^{[N+p-1,N+p]} \right)^\top&= \begin{bNiceArray}{c|c}[margin,cell-space-limits=2pt]
 	\mathscr U_{N}^{-1} &  U^{[N,N+p]}
 \end{bNiceArray} \left( \Lambda^{[N+p-1,N+p]} \right)^\top\\&= \mathscr L_N\mathscr L_{N,(0,1)}^{-1}\begin{bNiceArray}{c|c}[margin,cell-space-limits=2pt]
 \mathscr U_{N,(1,0)}^{-1} &  U^{[N,N+p-1]}_{(0,1)}
 \end{bNiceArray} .
\end{aligned}
\end{equation}
Thus,
 \[
 L_1\coloneq \mathscr L_N\mathscr L_{N,(0,1)}^{-1}
 \]
 is a monic lower bidiagonal matrix (with is main diagonal being monic). A iterative procedure leads to 
 \[
 \mathscr T_N= L_1\cdots L_p \mathscr  U_{N,(0,p)}^{-1}\mathscr U_N,
 \]
 with
  \[
\begin{aligned}
	 L_a&\coloneq  \mathscr L_{N,(0,a-1)}\mathscr L_{N,(0,a)}^{-1}
	 =\begin{bNiceArray}{cccccc}[,margin]
	 	1&0&\Cdots&&&0\\[-5pt]
	 	L_{a,1} & 1&0&&& \Vdots\\
	 	0&L_{a,2}& 1&0&& \\
	 	\Vdots&\Ddots &\Ddots[shorten-end=-15pt]&\Ddots&\Ddots&\\
	 	&&&&&0\\
	 	0&\Cdots&&0&L_{a,N-1}&1
	 \end{bNiceArray}, & a&\in\{1,\dots,p\}.
\end{aligned}
 \]
 Given the normalization of these bidiagonal matrices on its higher diagonal, we get that the bidiagonal factorization
 \[
 \mathscr T_N=L_1\cdots L_pU_q\cdots U_1
 \]
 is satisfied.
 \end{proof}
\begin{rem}
	 We observe that given the triangular form  these bidiagonal matrices for the $M$-th, $M\leq N$,  truncations we have
  \[
 \mathscr T_M=L_1^{[M]}\cdots L_p^{[M]}U_q^{[M]}\cdots U_1^{[M]},
 \]
 where $U_n^{[M]}, L_m^{[M]}$ are the  $M$-th truncations of the corresponding bidiagonal matrices. 
\end{rem}

\begin{rem}
	 Note that for $a\in\{1,\dots,p\}$ and $b\in\{1,\dots,q\}$ we have a the connection formulas
 \[
 \begin{aligned}
 	B^{[N]}_{(a-1,0)}&=L_a B^{[N]}_{(a,0)}, & 	A^{[N]}_{(0,b-1)}&= A^{[N]}_{(0,b)}U_b.
 \end{aligned}
 \]
\end{rem}
 
 \begin{teo}\label{teo:bidiagonal_moment}
 	The entries of the bidiagonal matrices can be expressed as follows:
\begin{equation}
	\label{eq:UL2}
\begin{aligned}
	L_{a,n}&=\left(\mathscr L_{N,(0,a-1)}\right)_{n,n-1}-\left(\mathscr L_{N,(0,a)}\right)_{n,n-1}, & a&\in\{1,\dots, p\}, & n&\in\{1,\dots,N-1\},\\
U_{b,n}&=\frac{\left(\mathscr U_{N,(b-1,0)}\right)_{n,n}}{\left(\mathscr U_{N,(b,0)}\right)_{n,n}}, & b&\in\{1,\dots, q\}, & n&\in\{0,1,\dots,N-1\}.
\end{aligned}
\end{equation}
Alternatively, we also find for $b\in\{1,\dots,q\}$ and $a\in\{1,\dots,p\}$ 
\begin{equation}
	\label{eq:UL3}
\begin{aligned}
			L_{a,n}&=\frac{\left(\mathscr U_{N,(0,a))}\right)_{n-1,n-1}}{\left(\mathscr U_{N,(0,a-1))}\right)_{n,n}}, &  n\in\{1,\dots,N-1\},\\
		U_{b,n}&=(1-\delta_{n,0})\left(\mathscr L_{N,(b,0)}\right)_{n,n-1}-\left(\mathscr L_{N,(b-1,0)}\right)_{n+1,n}, &  n\in\{0,1,\dots,N-1\}.
\end{aligned}
\end{equation}
 \end{teo}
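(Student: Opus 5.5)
The plan is to read all four formulas off the matrix identities already established in the proof of Theorem~\ref{th:Bidiagonal_Factorization}, comparing a single diagonal or subdiagonal entry on each side. Throughout, $\mathscr L_{N,(\cdot,\cdot)}$ is lower unitriangular and $\mathscr U_{N,(\cdot,\cdot)}$ is upper triangular, as assumed in this section.

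\emph{Formulas \eqref{eq:UL2}.} These follow from the definitions \eqref{eq:UL}, rewritten as
\[
L_a\,\mathscr L_{N,(0,a)}=\mathscr L_{N,(0,a-1)},\qquad \mathscr U_{N,(b,0)}\,U_b=\mathscr U_{N,(b-1,0)}.
\]
In the first identity, $L_a$ is unit lower bidiagonal and $\mathscr L_{N,(0,a)}$ is unitriangular. So the $(n,n-1)$ entry of the product is $L_{a,n}\cdot 1+1\cdot(\mathscr L_{N,(0,a)})_{n,n-1}$, and equating it with $(\mathscr L_{N,(0,a-1)})_{n,n-1}$ gives the first formula. In the second identity, the diagonal entry of the product of the upper triangular $\mathscr U_{N,(b,0)}$ and the upper bidiagonal $U_b$ is $(\mathscr U_{N,(b,0)})_{n,n}U_{b,n}$. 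The Gauss--Borel hypothesis makes these diagonal entries nonzero, so we may divide, which gives the second formula.

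\emph{Formulas \eqref{eq:UL3}.} Here I would use the intertwining relations from the proof instead of the definitions. For the $U$'s, take the $b$-th step, the analogue of \eqref{eq:bidiagonal_1} and \eqref{eq:bidiagonal_2}:
\[
\Lambda^{[N+q-b,N+q-b+1]}\begin{bNiceArray}{c}\mathscr L_{N,(b-1,0)}^{-1}\\\hline L^{[N+q-b+1,N]}_{(b-1,0)}\end{bNiceArray}=\begin{bNiceArray}{c}\mathscr L_{N,(b,0)}^{-1}\\\hline L^{[N+q-b,N]}_{(b,0)}\end{bNiceArray}U_b .
\]
Compare the $(n,n)$ entries of the top $N\times N$ blocks. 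On the left, the shift gives $(\mathscr L_{N,(b-1,0)}^{-1})_{n+1,n}=-(\mathscr L_{N,(b-1,0)})_{n+1,n}$, using that the first subdiagonal of the inverse of a unitriangular matrix is minus that of the matrix. On the right we get $U_{b,n}+(\mathscr L_{N,(b,0)}^{-1})_{n,n-1}=U_{b,n}-(\mathscr L_{N,(b,0)})_{n,n-1}$, where the second term is absent for $n=0$. Rearranging yields the stated formula for $U_{b,n}$, including the factor $(1-\delta_{n,0})$.

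For the $L$'s, the $a$-th step analogue of \eqref{eq:ML1} reads
\[
\begin{bNiceArray}{c|c}\mathscr U_{N,(0,a-1)}^{-1}&U^{[N,N+p-a+1]}_{(0,a-1)}\end{bNiceArray}\bigl(\Lambda^{[N+p-a,N+p-a+1]}\bigr)^\top=L_a\begin{bNiceArray}{c|c}\mathscr U_{N,(0,a)}^{-1}&U^{[N,N+p-a]}_{(0,a)}\end{bNiceArray}.
\]
Compare the $(n,n-1)$ entries. The left side gives $1/(\mathscr U_{N,(0,a-1)})_{n,n}$. The right side gives $L_{a,n}/(\mathscr U_{N,(0,a)})_{n-1,n-1}+(\mathscr U_{N,(0,a)}^{-1})_{n,n-1}$, and the last term vanishes by upper triangularity. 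Solving for $L_{a,n}$ gives the formula in \eqref{eq:UL3}.

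\emph{Main obstacle.} The delicate point is the boundary index $n=N-1$ in the $U$ formula. There the left-hand entry falls in the bordered row block $L^{[N+q-b+1,N]}_{(b-1,0)}$ rather than in $\mathscr L_{N,(b-1,0)}^{-1}$. I would handle this by reading $(\mathscr L_{N,(b-1,0)})_{N,N-1}$ as the corresponding entry of the extended factor. By \eqref{eq:L}, the first row of that bordered block is exactly the $N$-th row of the inverse of the $(N+1)$-truncated lower factor. This works because the construction in \eqref{eq:L} only requires orthogonality up to order $N$, even when $\mathscr M_{N+1}$ is singular. The same bookkeeping should be checked for the bordered column block in the $L$-identity, although there the entry used is always within the $N\times N$ part.
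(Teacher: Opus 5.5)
Your proposal is correct and is essentially the paper's proof. As in the paper, you read \eqref{eq:UL2} off the definitions \eqref{eq:UL}, and you obtain \eqref{eq:UL3} by comparing the $(n,n)$, respectively $(n,n-1)$, entries of the same two intertwining relations at the $b$-th and $a$-th Christoffel steps. Your extra care at $n=N-1$ fills in a point the paper leaves implicit: there $(\mathscr L_{N,(b-1,0)})_{N,N-1}$ must be read through the first row of the bordered block $L^{[N+q-b+1,N]}_{(b-1,0)}$, i.e.\ through the extended unitriangular factor, which exists because it only requires $\mathscr M_{N,(b-1,0)}$ to be nonsingular.
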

 
 \begin{proof}
 Equations \eqref{eq:UL2}	follows from Equations \eqref{eq:UL} for the bidiagonal matrices. Equations \eqref{eq:UL3} follow from
 	  \begin{align*}
 	\Lambda^{[N+q-b,N+q-b+1]}\begin{bNiceArray}{c}[margin,cell-space-limits=2pt]
 			\mathscr L_{N,(b-1,0)}^{-1}\\\hline L^{[N+q-b+1,N]}_{(b-1,0)}
 		\end{bNiceArray} &=\begin{bNiceArray}{c}[margin,cell-space-limits=2pt]
 			\mathscr L_{N,(b,0)}^{-1}\\\hline L^{[N+q-b,N]}_{(b,0)}
 		\end{bNiceArray}U_b,& b&\in\{1,\dots,q\},\\
 		 \begin{bNiceArray}{c|c}[margin,cell-space-limits=2pt]
 				\mathscr U_{N,(0,a-1)}^{-1} &  U^{[N,N+p-a+1]}
 			\end{bNiceArray} \left( \Lambda^{[N+p-a,N+p-a+1]} \right)^\top&=L_a\begin{bNiceArray}{c|c}[margin,cell-space-limits=2pt]
 				\mathscr U_{N,(a,0)}^{-1} &  U^{[N,N+p-a]}_{(0,a)}
 			\end{bNiceArray},& a&\in\{1,\dots,p\} .
 		\end{align*}
 \end{proof}
 \begin{coro}
 	In terms of the sub-leading coefficients $\beta^{(s+1)}_{n,(0,b)}$ of the polynomials $B^{(s+1)}_{n,(0,b)}(x)$  and the leading coefficients $\alpha^{(s+1)}_{n,(0,b)}$ of the polynomials $A^{(s+1)}_{n,(0,a)}(x)$, $b\in\{1,\dots, q\}$, $a\in\{1,\dots, p\}$, $n\in\{1,\dots,N-1\}$,
we have
\[
\begin{aligned}
	L_{a,n}&=\beta^{(s_{n}+1)}_{n+1,(0,a-1)}-\beta^{(s_n+1)}_{n,(0,a)}=\frac{\alpha^{(s_{n-1}+1)}_{n-1,(0,a)}}{\alpha^{(s_n+1)}_{n,(0,a-1)}}, \\
	U_{b,n}&=\frac{\alpha^{(s_n+1)}_{n,(b-1,0)}}{\alpha^{(s_n+1)}_{n,(b,0)}}=(1-\delta_{n,0})\beta^{(s_n+1)}_{n,(b,0)}-\beta^{(s_{n+1}+1)}_{n+1,(b-1,0)}.
\end{aligned}\] \end{coro}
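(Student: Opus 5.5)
This corollary is a translation of Theorem~\ref{teo:bidiagonal_moment} into the language of polynomial coefficients. The plan is to read off the entries of $\mathscr L_{N,(\cdot)}$ and $\mathscr U_{N,(\cdot)}$ that appear in \eqref{eq:UL2} and \eqref{eq:UL3} from the definitions $B^{[N]}=\mathscr L_N X^{[N]}_{[q]}$ and $A^{[N]}=\big(X^{[N]}_{[p]}\big)^\top\mathscr U_N$, applied to the perturbed measures $\d\mu_{(b,0)}$ and $\d\mu_{(0,a)}$. No new algebra is needed, only a careful identification of which polynomial and which monomial each matrix entry corresponds to.

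\textbf{Upper triangular factors.} Row $n$ of $X^{[N]}_{[p]}$ is $x^{\lfloor n/p\rfloor}e_{s+1}^\top$, where $s$ is the residue of $n$ modulo $p$ (the paper's $s_n$). Column $n$ of $A^{[N]}$ is therefore $\sum_{k\le n}(\mathscr U_N)_{k,n}\,x^{\lfloor k/p\rfloor}e_{s_k+1}$. Since $\mathscr U_N$ is upper triangular, the top-degree term in component $s_n+1$ is $(\mathscr U_N)_{n,n}x^{\lfloor n/p\rfloor}$. Hence $(\mathscr U_N)_{n,n}=\alpha^{(s_n+1)}_n$, the leading coefficient of $A^{(s_n+1)}_n$, and the same holds for every perturbed measure. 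Substituting into the quotient formulas for $U_{b,n}$ in \eqref{eq:UL2} and for $L_{a,n}$ in \eqref{eq:UL3} gives the two ratio expressions. For $L_{a,n}$ the numerator is evaluated at index $n-1$, so the residue $s_{n-1}$ appears there.

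\textbf{Lower triangular factors.} Since $\mathscr L_N$ is unitriangular, row $n$ of $B^{[N]}$ is $X_n+(\mathscr L_N)_{n,n-1}X_{n-1}+\cdots$. The component $s_{n-1}+1$ of $X_{n-1}$ is the monomial immediately below the leading one in the step-line ordering, so $(\mathscr L_N)_{n,n-1}$ is the subleading coefficient $\beta_n$ in that component. Substituting into the difference formula for $L_{a,n}$ in \eqref{eq:UL2} and for $U_{b,n}$ in \eqref{eq:UL3} gives the $\beta$ expressions. The factor $(1-\delta_{n,0})$ accounts for the missing entry $(\mathscr L)_{0,-1}$.

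\textbf{Main obstacle: index bookkeeping.} The hard part is matching the residue class and the shifts in $n$ exactly. The term $(\mathscr L)_{n,n-1}$ lives in component $s_{n-1}+1$, while $(\mathscr L)_{n+1,n}$ lives in component $s_n+1$, so one must check that the superscripts written in the statement agree with these; I would verify this by writing out the case $n=pk+s$. A second point is that the statement labels the perturbations in the $L$-formulas with $(0,\cdot)$ while using $B$-coefficients, whereas in the $U$-formulas it uses $(b,0)$ with $A$-coefficients. I would simply follow the labelling of Theorem~\ref{teo:bidiagonal_moment} entry by entry, so that each $(\mathscr L_{N,(\cdot)})_{n,n-1}$ and $(\mathscr U_{N,(\cdot)})_{n,n}$ is replaced by the coefficient of the polynomial for that same perturbation. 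If the superscripts in the statement then disagree with these residues, I would treat it as a notational convention to be reconciled, not as a gap in the argument.
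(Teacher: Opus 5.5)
\textbf{Gap in the $\beta$-expression for $L_{a,n}$.} Your overall strategy is the intended one. The paper gives no separate proof; the corollary is a reading of Theorem~\ref{teo:bidiagonal_moment}. Your identification $(\mathscr U_N)_{n,n}=\alpha^{(s_n+1)}_n$ (residue mod $p$) correctly gives both $\alpha$-quotients.

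The gap is your claim that substituting into \eqref{eq:UL2} ``gives the $\beta$ expressions.'' Both entries in \eqref{eq:UL2} are $(n,n-1)$ entries. With $\beta_n:=(\mathscr L)_{n,n-1}$ the substitution therefore yields
\[
L_{a,n}=\beta_{n,(0,a-1)}-\beta_{n,(0,a)}.
\]
The statement instead has $\beta_{n+1,(0,a-1)}$ in the first term, i.e.\ the $(n+1,n)$ entry of $\mathscr L_{N,(0,a-1)}$. That is a different matrix entry, not a relabelling of residues. Your safety clause only concerns superscripts, so it does not cover this. Under your own convention, the superscript $s_n+1$ of that term is even consistent with the entry $(n+1,n)$, so a superscript check would let it pass.

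The stated formula is in fact false. Take $p=q=1$ and compare coefficients of $x^n$ in
\[
xB_n=B_{n+1}+(\mathscr T_N)_{n,n}B_n+(\mathscr T_N)_{n,n-1}B_{n-1}.
\]
This gives $\beta_{n+1}-\beta_n=-(\mathscr T_N)_{n,n}$. Hence the stated expression equals $L_{1,n}-(\mathscr T_N)_{n,n}$. Equivalently, since $\d\mu_{(1,0)}=\d\mu_{(0,1)}=x\,\d\mu$ here, \eqref{eq:UL3} shows it equals $-U_{1,n}$, consistent with $(\mathscr T_N)_{n,n}=L_{1,n}+U_{1,n}$. It therefore differs from $L_{1,n}$ whenever $(\mathscr T_N)_{n,n}\neq0$. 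So your plan actually proves the corrected identity $L_{a,n}=\beta^{(s_n+1)}_{n,(0,a-1)}-\beta^{(s_n+1)}_{n,(0,a)}$, in the row-residue labelling used in the $U$-formula. You should state that the corollary is misprinted, rather than assert that the substitution reproduces it.

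\textbf{Smaller points.} For $q\ge2$, $(\mathscr L_N)_{n,n-1}$ is the coefficient of $x^{\lfloor (n-1)/q\rfloor}$ in $B^{(s_{n-1}+1)}_n$, with the residue taken mod $q$, not mod $p$. That is the \emph{leading} coefficient of that scalar entry, so your phrase ``subleading coefficient in that component'' is inaccurate. It is sub-leading only in the step-line ordering of the row $B_n$, and $\beta$ must be defined that way for the $U$-formula to hold. It cannot be the sub-leading coefficient of a single polynomial $B^{(s+1)}_n$, which would be $(\mathscr L_N)_{n,n-q}$. With this definition, indexing the superscripts by the row residue $s_n$ is the harmless relabelling you anticipated.

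Also, at $n=N-1$ the $U$-formula needs $(\mathscr L_{N,(b-1,0)})_{N,N-1}$, which is not an entry of the $N\times N$ factor. In the proof of \eqref{eq:UL3} it comes from the bordered block $L^{[N+q-b+1,N]}_{(b-1,0)}$, and no polynomial $B_{N,(b-1,0)}$ is available in general. That endpoint needs separate treatment, as in the paper's remark on $U_{1,N-1}$.
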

\begin{pro}
For $b\in\{1,\dots, q\}$, $a\in\{1,\dots, p\}$, the following identities between Christoffel transformed recurrence matrices hold:
\[	
\begin{aligned}
U_{b}\mathscr T^{[N,N-p]}_{(b-1,0)}
\left(U_b^{[N-p]}\right)^{-1}%
&=\mathscr T^{[N,N-p]}_{(b,0)},&
\left(L^{[N-q]}_{a}\right)^{-1}\mathscr T^{[N-q,N]}_{(0,a-1)}
L_a%
&=\mathscr T^{[N-q,N]}_{(0,a)}.
	\end{aligned}
	\]
\end{pro}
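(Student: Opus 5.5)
The identity follows by writing each side in terms of Gauss--Borel factors, via the defining expressions \eqref{eq:T2} and \eqref{eq:T1}, and then using the quotient formulas \eqref{eq:UL} for $U_b$ and $L_a$. Apply \eqref{eq:T2} to the perturbed measure $\d\mu_{(b-1,0)}$:
\[
\mathscr T^{[N,N-p]}_{(b-1,0)}=\mathscr U_{N,(b-1,0)}^{-1}\left(\Lambda^{[N-p,N]}_{[p]}\right)^\top\mathscr U_{N-p,(b-1,0)}.
\]
By Theorem~\ref{th:Bidiagonal_Factorization}, $U_b=\mathscr U_{N,(b,0)}^{-1}\mathscr U_{N,(b-1,0)}$. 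Left multiplication by $U_b$ therefore replaces $\mathscr U_{N,(b-1,0)}^{-1}$ by $\mathscr U_{N,(b,0)}^{-1}$.

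On the right we need the truncation $U_b^{[N-p]}$. Since $\mathscr U_{N,(b,0)}$ and $\mathscr U_{N,(b-1,0)}$ are upper triangular, and their leading principal submatrices are the Gauss--Borel factors of the truncated moment matrices, the $(N-p)$-th leading block of the product equals the product of the leading blocks. Hence
\[
U_b^{[N-p]}=\mathscr U_{N-p,(b,0)}^{-1}\mathscr U_{N-p,(b-1,0)},
\]
and
\[
\mathscr U_{N-p,(b-1,0)}\left(U_b^{[N-p]}\right)^{-1}=\mathscr U_{N-p,(b,0)}.
\]
Combining the two replacements gives
\[
U_b\mathscr T^{[N,N-p]}_{(b-1,0)}\left(U_b^{[N-p]}\right)^{-1}=\mathscr U_{N,(b,0)}^{-1}\left(\Lambda^{[N-p,N]}_{[p]}\right)^\top\mathscr U_{N-p,(b,0)},
\]
which is $\mathscr T^{[N,N-p]}_{(b,0)}$ by \eqref{eq:T2} for $\d\mu_{(b,0)}$.

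The second identity is the mirror argument using \eqref{eq:T1}:
\[
\mathscr T^{[N-q,N]}_{(0,a-1)}=\mathscr L_{N-q,(0,a-1)}\Lambda^{[N-q,N]}_{[q]}\mathscr L_{N,(0,a-1)}^{-1}.
\]
Right multiplication by $L_a=\mathscr L_{N,(0,a-1)}\mathscr L_{N,(0,a)}^{-1}$ replaces the last factor by $\mathscr L_{N,(0,a)}^{-1}$. Lower triangularity again gives $L_a^{[N-q]}=\mathscr L_{N-q,(0,a-1)}\mathscr L_{N-q,(0,a)}^{-1}$. Then
\[
\left(L_a^{[N-q]}\right)^{-1}\mathscr L_{N-q,(0,a-1)}=\mathscr L_{N-q,(0,a)},
\]
and the result is $\mathscr T^{[N-q,N]}_{(0,a)}$.

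The only delicate point is the truncation compatibility. We must check that the leading principal submatrices of $\mathscr U_{N,(b,0)}$ and $\mathscr L_{N,(0,a)}$ are exactly the factors entering \eqref{eq:T2} and \eqref{eq:T1} for the perturbed measures at size $N-p$ (respectively $N-q$). This needs the same normalization: $\mathscr L$ unitriangular, with the upper factor then determined. Under that normalization, uniqueness of the Gauss--Borel factorization of $\mathscr M_{N-p,(b,0)}$ settles the point. Invertibility of $U_b^{[N-p]}$ and $L_a^{[N-q]}$ follows from the nonvanishing diagonals given by \eqref{eq:UL2}.
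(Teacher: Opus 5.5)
Your proof is correct and follows the paper's argument: you substitute the Gauss--Borel expressions \eqref{eq:T2} and \eqref{eq:T1} for the perturbed measures, then cancel using $U_b=\mathscr U_{N,(b,0)}^{-1}\mathscr U_{N,(b-1,0)}$ and $L_a=\mathscr L_{N,(0,a-1)}\mathscr L_{N,(0,a)}^{-1}$. You also check explicitly, by triangularity and the leading-submatrix convention, that $U_b^{[N-p]}=\mathscr U_{N-p,(b,0)}^{-1}\mathscr U_{N-p,(b-1,0)}$ and $L_a^{[N-q]}=\mathscr L_{N-q,(0,a-1)}\mathscr L_{N-q,(0,a)}^{-1}$; the paper leaves this step implicit, and you handle it correctly.
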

\begin{proof}
It is a direct consequence of
	\begin{align*}
	\mathscr T_{(0,a)}^{[N-q,N]}&=\mathscr{L}_{N-q,(0,a)}	\Lambda^{[N-q,N]}_{[q]}\mathscr L_{N,(0,a)}^{-1},&
	\mathscr T^{[N,N-p]}_{(b,0)}&=
	\mathscr U_{N,(b,0)}^{-1}	\left(\Lambda^{[N-p,N]}_{[p]}\right)^\top \mathscr{U}_{N-p,(b,0)},
\end{align*}
and
\[
	\begin{aligned}
		U_b&\coloneq  \mathscr U_{N,(b,0)}^{-1}\mathscr U_{N,(b-1,0)},&
		L_a&\coloneq  \mathscr L_{N,(0,a-1)}\mathscr L_{N,(0,a)}^{-1}.
	\end{aligned}\]
\end{proof}

\begin{rem}
	In  the truncated scenario the basic Christoffel transformations do not induce anymore a permutation of the bidiagonal factors for the corresponding transformed recurrence  matrix, as it does in the infinite case.  To see this let put $b=1$, and look at 
	\[
U_{1}\mathscr T^{[N,N-p]}_{}
\left(U_1^{[N-p]}\right)^{-1}%
=\mathscr T^{[N,N-p]}_{(1,0)}\]
and, recalling Proposition \ref{pro:structure_T}, we know that
\[	\mathscr	 T^{[N,N-p]}=	\begin{bNiceArray}{cw{c}{2cm}ccw{c}{1cm}}[margin,cell-space-bottom-limit=5pt,]
	\Block[borders=bottom]{5-5}<\large>{\mathscr T_{N-p}}&&&&
	\\
	\\
	\\\\\\
	\Block[borders=right,c]{2-3}<\large>{0_{p\times (N-2p)}} &&&\Block[c]{2-2}<>{T^{[N,N-p]}}&	\\
	\\	\end{bNiceArray}, \]
	and we get
	\[
	\mathscr T_{N-p,(1,0)}=U^{[N-p]}_1L^{[N-p]}_1\cdots L^{[N-p]}_pU^{[N-p]}_q\cdots U^{[N-p]}_2+
\begin{bNiceArray}{cw{c}{2cm}ccw{c}{1.3cm}}[margin,cell-space-bottom-limit=5pt,]
	\Block[borders=bottom]{5-5}<\large>{0_{(N-p-1)\times (N-p)}}&&&&
	\\
	\\
	\\\\\\
	\Block[borders=right,c]{2-3}<\large>{0_{1 \times (N-2p)}} &&&\Block[c]{2-2}<>{e_1^\top T^{[N,N-p]}}&	\\
	\\	\end{bNiceArray}.
	\]
\end{rem}

\section{Christoffel formulas for the bidiagonal 	factors}

We now proceed to use the basic Christoffel transformations and the corresponding  Christoffel formulas in order to derive closed determinantal expressions of the entries in the bidiagonal matrices in terms of determinants of the original polynomials $A^{[N]}(x)$ and $B^{[N]}(X)$ and its truncations evaluated at $x=0$. The required determinants  are given by:

 \begin{defi}
 	[$\tau$-determinants]
 	Let us introduce the following determinants
 \[	\begin{aligned}
 			\tau^B_{b,n}&\coloneq \begin{vNiceArray}{ccc}[cell-space-limits=1pt]
 				B^{(1)}_{n}(0)		& \Cdots	&B^{(b)}_{n}(0) \\
 				\Vdots[shorten-end=-5pt] & &\Vdots[shorten-end=-5pt] 
 				\\\\
 				B^{(1)}_{n+b-1}(0)&\Cdots	& 	B^{(b)}_{n+b-1}(0)
 			\end{vNiceArray}, &
 			b&\in\{1,\dots, q\}, & n&\in\{0,1,\dots,N-b-1\},\\
 			\tau^A_{a,n}&\coloneq \begin{vNiceArray}{ccc}[cell-space-limits=1pt]
 				A^{(1)}_{n+a-1}(0)		& \Cdots	&A^{(1)}_{n}(0)	\\
 				\Vdots[shorten-end=0pt] & &\Vdots[shorten-end=0pt] 
 				\\\\
 				A^{(a)}_{n+a-1}(0)	&\Cdots	& A^{(a)}_{n}(0) 
 			\end{vNiceArray}, &
 			a&\in\{1,\dots, p\}, & n&\in\{0,1,\dots,N-a-1\},
 	\end{aligned}\]
 	and $\tau_{0,n}^A=\tau_{0,n}^B=1$.
 \end{defi}
 These determinants allow to characterize the existence of the orthogonality as was shown in \cite{Manuel-Miguel}, see also \cite{Manuel-Miguel_2} for the Geronimus situation.
 \begin{teo}[Existence of orthogonality (Mañas \& Rojas)]\phantom{hola}
 	\begin{enumerate}
 		\item The existence of the transformed orthogonality for $\d\mu_{(0,a)}$,  $	a\in\{1,\dots, p\}$, up to $N-a-1$ polynomials is equivalent to 
 \[	\begin{aligned}
 	\tau^A_{a,n}&\neq 0, &
 	a&\in\{1,\dots, p\}, & n&\in\{0,1,\dots,N-a-1\}.
 \end{aligned}\]
 \item  The existence of the transformed orthogonality for  $\d\mu_{(b,0)}$, $b\in\{1,\dots, q\}$, up to $N-b-1$ polynomial  is equivalent to 
 \[	\begin{aligned}
 	\tau^B_{b,n}&\neq 0 &
 	b&\in\{1,\dots, q\}, & n&\in\{0,1,\dots,N-b-1\}.
 \end{aligned}\]
 \end{enumerate}
 \end{teo}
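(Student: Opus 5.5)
The plan is to reduce the nonsingularity of the leading principal submatrices of the perturbed moment matrices to complementary minors of the triangular factors of the unperturbed problem. Jacobi's complementary minor identity then turns these into the $\tau$-determinants. I give the argument for $\d\mu_{(b,0)}$ and the $\tau^B$'s; the $\tau^A$ case is the transposed argument with $\mathscr U_N$ in place of $\mathscr L_N$.

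\emph{Step 1 (moments of the perturbed measure).} From the proof of Theorem~\ref{th:Bidiagonal_Factorization}, $\Lambda^{[M-1,M]}X^{[M]}_{[q]}=X^{[M-1]}_{[q]}\mathfrak X_{[q,1]}$. Iterating this, and using $\mathfrak X_{[q,1]}^b=\mathfrak X_{[q,b]}$, gives for each $n$
\[
\mathscr M_{n,(b,0)}=\Lambda^{[n,n+1]}\cdots\Lambda^{[n+b-1,n+b]}\,\mathscr M^{[n+b,n]} .
\]
Assume $n+b\le N$, which is exactly the range of $n$ in the statement. Then $\mathscr M^{[n+b,n]}$ is the submatrix of $\mathscr M_{n+b}=\mathscr L_{n+b}^{-1}\mathscr U_{n+b}^{-1}$ formed by its first $n$ columns, so $\mathscr M^{[n+b,n]}=\big(\mathscr L_{n+b}^{-1}\big)_{\text{all rows},\,1..n}\,\mathscr U_n^{-1}$. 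The product of the shifts $\Lambda^{[\cdot,\cdot]}$ simply deletes the first $b$ rows. Hence
\[
\mathscr M_{n,(b,0)}=G_n\,\mathscr U_n^{-1},
\]
where $G_n$ is the $n\times n$ submatrix of $\mathscr L_{n+b}^{-1}$ with rows $b+1,\dots,b+n$ and columns $1,\dots,n$. In particular $\det\mathscr M_{n,(b,0)}=\det G_n\cdot\det\mathscr U_n^{-1}$, and $\det\mathscr U_n^{-1}\ne0$.

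\emph{Step 2 (Jacobi).} Normalize $\mathscr L_{n+b}$ to be lower unitriangular, so its determinant is $1$. By Jacobi's complementary minor formula, the minor of $\mathscr L_{n+b}^{-1}$ with rows $I=\{b+1,\dots,b+n\}$ and columns $J=\{1,\dots,n\}$ equals $\pm$ the minor of $\mathscr L_{n+b}$ with rows $J^c=\{n+1,\dots,n+b\}$ and columns $I^c=\{1,\dots,b\}$. Now $B^{[N]}=\mathscr L_NX^{[N]}_{[q]}$ and $X^{[N]}_{[q]}(0)$ consists of $I_q$ on top and zeros below, so $B^{(j)}_m(0)=(\mathscr L_N)_{m,j-1}$ in $0$-based indexing, for $j\le q$. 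This complementary minor is therefore exactly $\tau^B_{b,n}$ (rows $n,\dots,n+b-1$ and columns $0,\dots,b-1$ in $0$-based indexing). Consequently $\det\mathscr M_{n,(b,0)}=\pm\,\tau^B_{b,n}\det\mathscr U_n^{-1}$.

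\emph{Step 3 (conclusion).} Existence of the transformed orthogonality up to the stated degree means that $\mathscr M_{n,(b,0)}$ admits a Gauss--Borel factorization. This is equivalent to the nonvanishing of all its leading principal minors, hence by Step~2 to $\tau^B_{b,n}\ne0$ over the stated range. The $A$-side is obtained from $\mathscr M_{(0,a)}^{[n,n]}=\mathscr M^{[n,n+a]}(\Lambda^{[n,n+1]})^\top\cdots$ together with $A^{[N]}=(X^{[N]}_{[p]})^\top\mathscr U_N$. There the Jacobi minor is taken in $\mathscr U_{n+a}$, after dividing out the diagonal normalization, and it reproduces $\tau^A_{a,n}$ with its reversed column order, which only affects a sign.

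The main obstacle is the index bookkeeping. One has to check that the required truncation $\mathscr L_{n+b}$ exists under the hypothesis on $\mathscr M_N$; this holds because $n+b\le N$. One also has to check that the rows and columns selected by Jacobi's identity land exactly on the rows $n,\dots,n+b-1$ and the first $b$ columns. The signs and the non-monic normalization of $\mathscr U$ are harmless, since only nonvanishing is at stake.
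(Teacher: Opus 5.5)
Your proof is correct. It takes a genuinely different route from the paper, which gives no argument of its own and simply invokes the general existence theorem of Ma\~nas--Rojas for Christoffel perturbations; $\mathfrak X_{[q,1]}^b$ and $(\mathfrak X_{[p,1]}^a)^\top$ are special cases of that theorem.

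You instead exploit a special feature of these factors: they act on the monomial basis as pure shifts. So $\mathscr M_{n,(b,0)}$ is obtained from $\mathscr M_{n+b}$ by deleting its first $b$ rows and its last $b$ columns. Jacobi's identity then turns the resulting minor of $\mathscr L_{n+b}^{-1}$ into the bottom-left $b\times b$ block of $\mathscr L_{n+b}$, which is $\tau^B_{b,n}$. On the $A$-side the complementary minor is the top-right $a\times a$ block of $\mathscr U_{n+a}$, which is $\pm\tau^A_{a,n}$, as you claim.

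The citation buys generality, since it covers perturbations without this submatrix structure. Your route buys a self-contained elementary proof, and in fact an exact identity rather than a bare equivalence. With $\mathscr L$ unitriangular the Jacobi sign is $(-1)^{nb}$, so $\det\mathscr M_{n,(b,0)}=(-1)^{nb}\tau^B_{b,n}\det\mathscr U_n^{-1}$. Inserting this into $U_{b,n}=(\mathscr U_{N,(b-1,0)})_{n,n}/(\mathscr U_{N,(b,0)})_{n,n}$, with $(\mathscr U_{N,(c,0)})_{n,n}=\det\mathscr M_{n,(c,0)}/\det\mathscr M_{n+1,(c,0)}$, reproduces the formula for $U_{b,n}$ in Theorem~\ref{th:bidiagonal_Christoffel}, sign included.

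Two cosmetic fixes:
\begin{enumerate}
\item On the $A$-side the transposed shifts must be ordered $(\Lambda^{[n+a-1,n+a]})^\top\cdots(\Lambda^{[n,n+1]})^\top$.
\item Step~3 should say explicitly that the leading minor of size $n$ corresponds to $\tau^B_{b,n}$, and that $\tau^B_{b,0}$ never vanishes, being the determinant of the leading $b\times b$ block of $\mathscr L_N$. Hence the stated range $n\le N-b-1$ is equivalent to the Gauss--Borel factorizability of $\mathscr M_{N-b-1,(b,0)}$. Your standing assumption $n+b\le N$ allows one index more than the statement, so it is not ``exactly'' the stated range.
\end{enumerate}
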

 \begin{proof}
 	Is a direct consequence of the existence results described in \cite{Manuel-Miguel}.
 \end{proof}
 
 We proved in  \cite{aim} that
 \begin{teo}\label{teo:equatility determinants}
 	For $n\in\{0,1,\dots,\min(N-p,N-q)\}$ we find
\[
(-1)^{(q-1) n} \begin{vNiceArray}{ccc}[cell-space-limits=1pt]
		B^{(1)}_{n}(x)		& \Cdots	&B^{(q)}_{n}(x) \\
		\Vdots[shorten-end=-5pt] & &\Vdots[shorten-end=-5pt] 
		\\\\
		B^{(1)}_{n+q-1}(x)&\Cdots	& 	B^{(b)}_{n+q-1}(x)
	\end{vNiceArray}=(-1)^{(p-1) (n+1)}\mathscr T_{p, 0} \cdots \mathscr T_{n+p-1, n-1} 
	\begin{vNiceArray}{ccc}[cell-space-limits=1pt]
		A^{(1)}_{n+p-1}(x)		& \Cdots	&A^{(1)}_{n}(x)	\\
		\Vdots[shorten-end=0pt] & &\Vdots[shorten-end=0pt] 
		\\\\
		A^{(p)}_{n+p-1}(x)	&\Cdots	& A^{(p)}_{n}(x) 
	\end{vNiceArray}.
\]
 	\end{teo}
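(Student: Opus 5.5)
The plan is to treat both sides as polynomials in $x$ and compare them through the banded matrix $x I-\mathscr T_N$, using the recursion relations from the Corollary on recursion relations. By the first of these relations, the $q$ columns of $B^{[N]}(x)$ are, up to the boundary term, solutions of the $(p+q+1)$-term recurrence encoded by $\mathscr T_N$. By the second, the $p$ rows of $A^{[N]}(x)$ are left solutions of the same recurrence.

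First I will normalise the initial data. Because $\mathscr L_N$ is lower unitriangular and $B^{[N]}=\mathscr L_N X^{[N]}_{[q]}$, the top $q\times q$ block of $B^{[N]}$ is unit lower triangular. Likewise, since $A^{[N]}=(X^{[N]}_{[p]})^\top\mathscr U_N$, the leftmost $p\times p$ block of $A^{[N]}$ is upper triangular with constant diagonal $\mathscr U_{k,k}$.

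Next I will write the recurrence as an $(n+q)\times(n+q+p)$ banded system. Reading $(xI-\mathscr T)B=0$ row by row, each new entry $B^{(j)}_{k+q}$ is produced from the previous ones, because the top superdiagonal of $\mathscr T$ is normalised to $1$. By Cramer's rule, each $q\times q$ minor of consecutive rows $n,\dots,n+q-1$ of the $B$-block equals, up to sign, a maximal minor $D_n(x)$ of the band matrix $xI-\mathscr T$ with the first $n$ rows and suitable columns retained. The same argument applied to the left recurrence $A(xI-\mathscr T)=0$ expresses the $p\times p$ $A$-determinant on columns $n,\dots,n+p-1$ as the complementary minor of the same band matrix. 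In this case one solves for new entries through the lowest subdiagonal, whose entries are $\mathscr T_{k+p,k}$, so dividing by them produces the prefactor.

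The two minors are complementary in a matrix whose remaining part is triangular. I would make this precise with Jacobi's complementary-minor identity applied to the square truncation $(xI-\mathscr T_{n+p})$ together with its Gauss--Borel (triangular) factors. Alternatively, one can compute $D_n(x)$ directly by Laplace expansion along the band. Either way the ratio of the two determinants should come out as the product of the pivot entries $\mathscr T_{p,0}\cdots\mathscr T_{n+p-1,n-1}$, the lowest-subdiagonal entries met when advancing the $A$-recurrence from index $0$ to $n$.

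As a check, I would verify the identity at $n=0$. There the $B$-determinant is $1$ by unitriangularity. On the other side the product is empty, so the identity reduces to $(-1)^{p-1}$ times the $A$-determinant of columns $0,\dots,p-1$ with reversed column order. This should equal $1$ once the monic/normalisation convention of Theorem~\ref{th:Bidiagonal_Factorization} is used.

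I would then run an induction in $n$, which is the step I expect to be the main obstacle. Passing from $n$ to $n+1$, the $B$-determinant changes according to the recurrence row $n+q$ of $\mathscr T$. Expanding the shifted determinant with the recurrence and discarding the dependent rows should give a factor $(-1)^{q-1}$ per step, from moving a row past $q-1$ others. Similarly, the $A$-determinant acquires $(-1)^{p-1}/\mathscr T_{n+p,n}$ per step, from solving the left recurrence for $A_{n+p}$ via the lowest subdiagonal entry. Matching these factors gives the stated signs $(-1)^{(q-1)n}$ and $(-1)^{(p-1)(n+1)}$ and the telescoping product.

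The delicate point is to show that in each expansion all intermediate terms vanish, i.e. that the two-sided Casoratian-type quantity is invariant under the step. The route I would try first is a discrete Liouville/Wronskian argument: the $B$-solutions and the $A$-solutions together form dual bases of the solution spaces of the recurrence. The natural pairing $A\,(\text{boundary block of }xI-\mathscr T)\,B$ is therefore step-independent, and its determinantal form yields the identity.
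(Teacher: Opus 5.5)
The paper gives no proof of this statement; it quotes it from \cite{aim}. Your sketch therefore has to stand on its own, and as written it does not. Your Cramer/complementary-minor idea is the right one, but you never carry it out. Instead the argument rests on an induction in $n$ whose step is false. Write $B_k=(B^{(1)}_k,\dots,B^{(q)}_k)$ and substitute $B_{n+q}=xB_n-\sum_{j=n-p}^{n+q-1}\mathscr T_{n,j}B_j$. You get
\[
\det\begin{bmatrix}B_{n+1}\\ \vdots\\ B_{n+q}\end{bmatrix}=(-1)^{q-1}(x-\mathscr T_{n,n})\det\begin{bmatrix}B_{n}\\ \vdots\\ B_{n+q-1}\end{bmatrix}-\sum_{j=n-p}^{n-1}\mathscr T_{n,j}\det\begin{bmatrix}B_{n+1}\\ \vdots\\ B_{n+q-1}\\ B_j\end{bmatrix}.
\]
The last minors are not consecutive and do not vanish. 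An inductive hypothesis about consecutive minors says nothing about them; for $p=q=1$ this is just $B_{n+1}=(x-\mathscr T_{n,n})B_n-\mathscr T_{n,n-1}B_{n-1}$. In fact each Casoratian is a polynomial of degree $n$ in $x$. So neither one picks up a constant factor $(-1)^{q-1}$, resp.\ $(-1)^{p-1}/\mathscr T_{n+p,n}$, per step. Only their ratio behaves that way, and proving that needs control of all maximal minors. The Liouville remark does not supply this: the natural concomitant $\sum_{k<n\le j}A_k\mathscr T_{k,j}B_j-\sum_{j<n\le k}A_k\mathscr T_{k,j}B_j$ is identically zero. Turning it into a determinant identity would need a Pl\"ucker-duality argument you do not give.

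What is missing is that both Casoratians are, up to explicit factors, the same minor $P_n(x)=\det(xI_n-\mathscr T_n)$, so no induction is needed.
\begin{itemize}
\item For $B$: take the block of $xI-\mathscr T$ with rows $0,\dots,n-1$ and columns $0,\dots,n+q-1$, and border it below by the rows $[\,0\ \ I_q\,]$. The resulting square matrix $G$ has $\det G=P_n$, and the only nonzero rows of $GB^{[n+q]}$ are $B_n,\dots,B_{n+q-1}$. Jacobi's theorem for $G^{-1}$ then leaves the minor of $G$ on rows $0,\dots,n-1$ and columns $q,\dots,n+q-1$. This minor is triangular with diagonal $-\mathscr T_{k,k+q}=-1$.
\item For $A$: take columns $0,\dots,n-1$ and rows $0,\dots,n+p-1$, and border on the right by $\left[\begin{smallmatrix}0\\ I_p\end{smallmatrix}\right]$. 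The complementary minor is triangular with diagonal $-\mathscr T_{j+p,j}$.
\end{itemize}
This yields
\[
\begin{aligned}
(-1)^{(q-1)n}\det\bigl[B^{(j)}_{n+i}\bigr]&=\det\bigl[B^{(j)}_{i}\bigr]\,P_n(x),\\
(-1)^{(p-1)(n+1)}\mathscr T_{p,0}\cdots\mathscr T_{n+p-1,n-1}\det\bigl[A^{(i)}_{n+p-1-j}\bigr]&=(-1)^{p-1}\det\bigl[A^{(i)}_{p-1-j}\bigr]\,P_n(x).
\end{aligned}
\]
So the theorem is equivalent to its case $n=0$, and that is exactly where your check fails.

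With $\mathscr L_N$ unitriangular the $B$ side at $n=0$ equals $1$. However, $A^{(i)}_k=(\mathscr U_N)_{i-1,k}$ for $k<p$, so $(-1)^{p-1}\det\bigl[A^{(i)}_{p-1-j}\bigr]=(-1)^{(p-1)(p+2)/2}/\det\mathscr M_p$. For $p=q=1$ one has $A_n=B_n/h_n$ and $\mathscr T_{k,k-1}=h_k/h_{k-1}$, so the right-hand side is $B_n/h_0$ with $h_0=\int\d\mu$. The normalisation of Theorem~\ref{th:Bidiagonal_Factorization} fixes $\mathscr L_N$ only, not $\mathscr U_N$. For the Gauss--Borel polynomials, what can actually be proved is the identity up to this $n$-independent constant, or after rescaling the $A$'s. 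The constant is harmless for \eqref{eq:tau_quotient_A-B}.
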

 	That immediately leads to:
 	\begin{coro}
 		The following identities hold
\begin{align}\label{eq:tau_A-B}
	 	(-1)^{(p-1) (n+1)}\mathscr T_{p, 0} \cdots \mathscr T_{n+p-1, n-1} 	\tau^A_{p,n} &=(-1)^{(q-1) n} \tau^B_{q,n},\\\label{eq:tau_quotient_A-B}
	 	(-1)^{p-1} \frac{1}{\mathscr T_{n+p, n} } \frac{\tau^A_{p,n}}{\tau^A_{p,n+1}} =(-1)^{q-1}  \frac{\tau^B_{q,n}}{\tau^B_{q,n+1}}. 
\end{align}
 
 	\end{coro}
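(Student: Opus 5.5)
The first identity \eqref{eq:tau_A-B} is Theorem~\ref{teo:equatility determinants} evaluated at $x=0$. Both determinants in that theorem are polynomial in $x$, and the identity holds as an equality of polynomials. Setting $x=0$ turns the left determinant into $\tau^B_{q,n}$, by the definition of the $\tau$-determinants with $b=q$. The right determinant becomes $\tau^A_{p,n}$, with the rows and columns in exactly the ordering used in the definition of $\tau^A_{a,n}$ with $a=p$. The last column entry $B^{(b)}_{n+q-1}$ in the statement of Theorem~\ref{teo:equatility determinants} is a typo for $B^{(q)}_{n+q-1}$, and I will read it that way. This gives \eqref{eq:tau_A-B} for every $n$ in the admissible range $\{0,1,\dots,\min(N-p,N-q)\}$.

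For \eqref{eq:tau_quotient_A-B}, I would write \eqref{eq:tau_A-B} at $n$ and at $n+1$, both of which must lie in the admissible range. Then I divide the first by the second. The product of recursion-matrix entries $\mathscr T_{p,0}\cdots\mathscr T_{n+p-1,n-1}$ at index $n$ is, at index $n+1$, the same product with one extra factor $\mathscr T_{n+p,n}$. These are the entries on the lowest ($p$-th) subdiagonal of $\mathscr T_N$. The common product cancels and leaves $1/\mathscr T_{n+p,n}$. The signs become $(-1)^{(p-1)(n+1)-(p-1)(n+2)}=(-1)^{-(p-1)}=(-1)^{p-1}$ and $(-1)^{(q-1)n-(q-1)(n+1)}=(-1)^{q-1}$. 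This is exactly \eqref{eq:tau_quotient_A-B}.

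The only point needing care is the legitimacy of the division. I would assume that $\tau^A_{p,n+1}$ and $\tau^B_{q,n+1}$ are nonzero, which is the Mañas--Rojas existence condition already assumed for the Christoffel-transformed orthogonality. I would also need the subdiagonal entries $\mathscr T_{k+p,k}$ to be nonzero. These entries are products of diagonal quotients of the $\mathscr U$ factors, so they are nonzero whenever the Gauss--Borel factorizations exist. Otherwise, \eqref{eq:tau_A-B} itself shows that $\tau^A_{p,n+1}\neq0$ and $\tau^B_{q,n+1}\neq0$ force $\mathscr T_{n+p,n}\neq 0$. A small bookkeeping issue is the range of $n$, which shrinks by one in the quotient identity. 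A second is the convention for the empty product when $n=0$, where the product is interpreted as $1$.
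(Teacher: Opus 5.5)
Your proposal is correct and follows the paper's own proof. You evaluate the polynomial identity of the cited determinant-equality theorem at $x=0$ to get the first identity, then divide its instances at $n$ and $n+1$ so the common product of $p$-th subdiagonal entries cancels, leaving $1/\mathscr T_{n+p,n}$ and the signs $(-1)^{p-1}$, $(-1)^{q-1}$. Your extra remarks are accurate refinements of what the paper leaves implicit: the typo $B^{(b)}_{n+q-1}\to B^{(q)}_{n+q-1}$, and the nonvanishing needed for the division, which holds since $\mathscr T_{n+p,n}=(\mathscr U_N)_{n,n}/(\mathscr U_N)_{n+p,n+p}$.
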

 	\begin{proof}
 	Relation \eqref{eq:tau_A-B} follows from the evaluations of the determinants in Theorem \ref{teo:equatility determinants}, Equation \eqref{eq:tau_quotient_A-B} is an trivial consequence derived by taking ratios of the previous relations.
 	\end{proof}
 
 \begin{teo}\label{th:bidiagonal_Christoffel}
 	The entries of the bidiagonal matrices have the following expressions
\[ \begin{aligned}
 		U_{b,n}&=-\frac{\tau^B_{b-1,n}\tau^B_{b,n+1}}{\tau^B_{b-1,n+1}\tau^B_{b,n}}, &b&\in\{1,\dots, q\},  &n\in\{0,1,\dots,N-b-1\},\\
 		L_{a,n+1}&=-\frac{\tau^A_{a-1,n+2}\tau^A_{a,n}}{\tau^A_{a-1,n+1}\tau^A_{a,n+1}}, &a&\in\{1,\dots, p\},  &n\in\{0,1,\dots,N-a-1\}.
 \end{aligned}\]
 \end{teo}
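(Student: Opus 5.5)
We use Theorem~\ref{teo:bidiagonal_moment}, which expresses $U_{b,n}$ and $L_{a,n}$ through diagonal entries of the Gauss--Borel factors of consecutive Christoffel-transformed moment matrices. It then suffices to compute those diagonal entries (equivalently, the leading coefficients $\alpha_{n,(b,0)}$, $\alpha_{n,(0,a)}$) by Christoffel formulas in terms of the $\tau$-determinants. We treat the $U$ factors first; the $L$ factors follow by transposition, swapping $p\leftrightarrow q$, $B\leftrightarrow A$ and $\mathscr L\leftrightarrow\mathscr U^\top$.

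\emph{Step 1 (Christoffel formula for $B_{(b,0)}$).} The measure is $\d\mu_{(b,0)}=\mathfrak X_{[q,b]}\d\mu=\mathfrak X_{[q,1]}^b\d\mu$. Iterating the connection formula of the Remark after Theorem~\ref{th:Bidiagonal_Factorization}, we see that $B^{[N]}_{(b,0)}$ arises from $B^{[N]}$ by a lower banded transformation of bandwidth $b$, divided by $x$ in the first $b$ columns. Concretely, the row $n$ of $x B_{n,(b,0)}$ (in the first $b$ components) equals
\[
B_n+c_{n,1}B_{n+1}+\dots+c_{n,b}B_{n+b}.
\]
The coefficients are fixed by requiring that components $1,\dots,b$ vanish at $x=0$. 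This is a $b\times b$ linear system whose matrix is the one defining $\tau^B_{b,n+1}$. By Cramer's rule,
\[
x B_{n,(b,0)}\propto\frac{1}{\tau^B_{b,n+1}}
\begin{vNiceArray}{cccc}
 B_n & B^{(1)}_n(0)&\Cdots&B^{(b)}_n(0)\\
 \Vdots&\Vdots&&\Vdots\\
 B_{n+b} & B^{(1)}_{n+b}(0)&\Cdots&B^{(b)}_{n+b}(0)
\end{vNiceArray}.
\]
Orthogonality with respect to $\d\mu_{(b,0)}$ is inherited from that of the $B_k$, and the existence theorem of Mañas \& Rojas guarantees that $\tau^B_{b,n+1}\neq 0$.

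\emph{Step 2 (the monic normalization).} Since $\mathscr L$ is unitriangular, the $B_{n,(b,0)}$ must be normalized by their leading coefficient in the step-line. That coefficient is the $B_{n+b}$ term of the expansion, i.e. $(-1)^b\,\tau^B_{b,n}/\tau^B_{b,n+1}$ up to the factor coming from the shifted index. Comparing the dual quantities $(\mathscr U_{N,(b,0)})_{n,n}$ gives the ratio
\[
\frac{(\mathscr U_{N,(b,0)})_{n,n}}{(\mathscr U_{N,(b-1,0)})_{n,n}}
=-\frac{\tau^B_{b-1,n+1}\tau^B_{b,n}}{\tau^B_{b-1,n}\tau^B_{b,n+1}}.
\]
Here the sign $-1$ comes from the relative sign $(-1)^b/(-1)^{b-1}$. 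The case $b=1$ is checked directly using $\tau^B_{0,n}=1$. Inserting this into \eqref{eq:UL2} yields the formula for $U_{b,n}$.

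A cleaner way to reach Step 2 is the alternative form \eqref{eq:UL3}, $U_{b,n}=\beta_{n,(b,0)}-\beta_{n+1,(b-1,0)}$, combined with the sub-leading coefficients read off from the Cramer determinant. One may also compare $B_{(b-1,0)}=L_bB_{(b,0)}$-type connection formulas evaluated at $x=0$, using the Desnanot--Jacobi (Sylvester) identity for the bordered determinants. For the $L$ factors we repeat the argument with $A_{(0,a)}$ and the determinants $\tau^A_{a,n}$, using \eqref{eq:UL3} for $L_{a,n}$ as a ratio of diagonal entries of $\mathscr U_{N,(0,a)}$ and $\mathscr U_{N,(0,a-1)}$. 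The index shift $n\to n+1$ in the statement reflects the fact that $L_a$ starts at row $1$.

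The main obstacle is Step 2, namely tracking the step-line normalization and the signs. The leading coefficient of the perturbed polynomial with index $n$ involves a cofactor of the Cramer determinant, and identifying that cofactor as $\tau^B_{b,n}$ (with the index shift $n$ versus $n+1$) requires care with which component $s_n$ carries the leading term. I would first verify the formula for $q=1$ (the scalar Christoffel case, where $U_{1,n}=-B_{n+1}(0)/B_n(0)$) and for $b=1$ in general, and then do the induction in $b$ via Sylvester's identity. The Sylvester step converts the product of ratios $\alpha_{n,(b-1,0)}/\alpha_{n,(b,0)}$ into the stated cross-ratio.
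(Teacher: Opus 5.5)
Your skeleton matches the paper's proof: a Christoffel relation for the perturbed family, evaluation at $x=0$ on the kernel of $\mathfrak X^b_{[q,1]}(0)$ (spanned by $e_1,\dots,e_b$), Cramer's rule, and a ratio in $b$. You route it through Theorem~\ref{teo:bidiagonal_moment}. As written, though, the two steps that carry the proof are not established.

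\emph{Step~1.} It cannot come from iterating the connection formulas of the Remark. Read off from the definitions of $U_b$ and $L_a$, those formulas are $B^{[N]}_{(0,a-1)}=L_aB^{[N]}_{(0,a)}$ and $A^{[N]}_{(b-1,0)}=A^{[N]}_{(b,0)}U_b$. They contain no factor $\mathfrak X_{[q,1]}$ and never express $B_{(b,0)}$ in terms of $B$; this is likely the source of your ``lower banded''. The relation you need comes from iterating \eqref{eq:bidiagonal_1} as in \eqref{eq:U2}. Multiply on the left by $\mathscr L_{N+q-b,(b,0)}$ and on the right by $B^{[N]}(x)$, then keep the first $N-b$ rows:
\[
(U_b\cdots U_1)^{[N-b,N]}B^{[N]}(x)=B^{[N-b]}_{(b,0)}(x)\,\mathfrak X^b_{[q,1]}(x),
\]
which is an \emph{upper} banded relation. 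Your ``inherited orthogonality'' idea also works, but run in the other direction: $B_{n,(b,0)}\mathfrak X^b_{[q,1]}$ is $\d\mu$-orthogonal to the first $n$ rows of $X_{[p]}$, hence lies in the span of $B_n,\dots,B_{n+b}$.

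\emph{Step~2.} You flag this as the main obstacle, and it is asserted (``comparing the dual quantities'') rather than derived; no Sylvester induction is needed. The display above shows your connection coefficients are exactly the entries of $U_b\cdots U_1$.
\begin{itemize}
\item Its $b$-th superdiagonal is identically $1$. Equivalently, $X_n\mathfrak X^b_{[q,1]}=X_{n+b}$ for the $n$-th row $X_n$ of $X_{[q]}$, so the shifted index contributes no extra factor.
\item Its diagonal is $U_{b,n}\cdots U_{1,n}$, since diagonals of upper triangular matrices multiply.
\end{itemize}
Normalize by the coefficient of $B_{n+b}$ rather than that of $B_n$. Cramer's rule then has $\tau^B_{b,n}$ in the denominator and gives $(U_b\cdots U_1)_{n,n}=(-1)^b\tau^B_{b,n+1}/\tau^B_{b,n}$; dividing by the case $b-1$ yields the formula at once. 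If you prefer to keep \eqref{eq:UL2}, the missing bridge is the same fact read through $U_b\cdots U_1=\mathscr U^{-1}_{N,(b,0)}\mathscr U_N$, namely $(\mathscr U_{N,(b,0)})_{n,n}=c_{n,b}(\mathscr U_N)_{n,n}$.

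\emph{The $L$ factors.} Transposition does not transport the argument, because only $\mathscr L_N$ is unitriangular; note that the two formulas in the statement are not mirror images. What you need is the \emph{lowest} subdiagonal of $L_1\cdots L_a$. Use $A^{[N]}(L_1\cdots L_a)^{[N,N-a]}=(\mathfrak X^a_{[p,1]})^\top A^{[N-a]}_{(0,a)}$ together with the left kernel of $(\mathfrak X^a_{[p,1]}(0))^\top$, spanned by $e_1^\top,\dots,e_a^\top$. This gives $(L_1\cdots L_a)_{n+a,n}=(-1)^a\tau^A_{a,n}/\tau^A_{a,n+1}$. The identity $(L_1\cdots L_a)_{n+a,n}=(L_1\cdots L_{a-1})_{n+a,n+1}L_{a,n+1}$, not the row labelling of $L_a$, is what produces the shifted indices $\tau^A_{a-1,n+1},\tau^A_{a-1,n+2}$. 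Your \eqref{eq:UL3} route agrees with this, via $(\mathscr U_{N,(0,a)})_{n,n}=(\mathscr U_N)_{n+a,n+a}(L_1\cdots L_a)_{n+a,n}$.
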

 
 \begin{proof}
 Equation \eqref{eq:bidiagonal_1} imply
 \begin{align*}
 	\begin{bNiceArray}{c}[margin,cell-space-limits=2pt]
 		\mathscr L_{N,(1,0)}^{-1}\\\hline L^{[N+q-1,N]}_{(1,0)}
 	\end{bNiceArray}U_1=	\Lambda^{[N+q-1,N+q]}	\begin{bNiceArray}{c}[margin,cell-space-limits=2pt]
 		I_N\\\hline L^{[N+q,N]}\mathscr L_N
 	\end{bNiceArray} \mathscr L_N^{-1}
 \end{align*}
 and recalling that by definition
 \[
  	\mathscr L_{N+q-1,(1,0)} 
 \begin{bNiceArray}{c}[margin,cell-space-limits=2pt]
	\mathscr L_{N,(1,0)}^{-1}\\\hline L^{[N+q-1,N]}_{(1,0)}
 \end{bNiceArray}=\begin{bNiceArray}{c}[margin,cell-space-limits=2pt]
I_N\\
0_{(q-1)\times N}
 \end{bNiceArray}
 \]
we find that
 \[
 \begin{aligned}
 	\begin{bNiceArray}{c}[margin,cell-space-limits=2pt]
 		I_N\\\hline 0_{(q-1)\times N}
 	\end{bNiceArray}U_1B^{[N]}(x)&=\mathscr L_{N+q-1,(1,0)}	\Lambda^{[N+q-1,N+q]}	\begin{bNiceArray}{c}[margin,cell-space-limits=2pt]
 		I_N\\\hline L^{[N+q,N]}\mathscr L_N
 	\end{bNiceArray} \mathscr L_N^{-1}B^{[N]}(x)\\&=\mathscr L_{N+q-1,(1,0)}	\Lambda^{[N+q-1,N+q]}	\begin{bNiceArray}{c}[margin,cell-space-limits=2pt]
 		I_N\\\hline L^{[N+q,N]}\mathscr L_N
 	\end{bNiceArray} X_{[q]}^{[N]}(x)\\&=\mathscr L_{N+q-1,(1,0)}	\Lambda^{[N+q-1,N+q]}	\begin{bNiceArray}{c}[margin,cell-space-limits=2pt]
 		X_{[q]}^{[N]}(x)\\\hline L^{[N+q,N]}B^{[N]}(x)
 	\end{bNiceArray} \\
 	&=\begin{bNiceMatrix}
 		\mathscr L_{N-1,(1,0)}& 0_{(N-1)\times q}\\
 		\mathscr l^{[N-1+q,N-1]}_{(1,0)}& \mathscr{L}_{[N-1,N-1+q],(1,0)}
 	\end{bNiceMatrix}\begin{bNiceArray}{c}[margin,cell-space-limits=2pt]
 		X_{[q]}^{[N-1]}(x)\mathfrak X_{[q,1]}(x)\\\hline L^{[N+q,N]}B^{[N]}(x)
 	\end{bNiceArray} \
 \end{aligned}
 \]
 Hence, if $U_1^{[N-1,N]}$ is obtained by removing the last row in $U_1$,
  we get 
 \[
 U_1^{[N-1,N]} B^{[N]}(x)=B_{(1,0)}^{[N-1]}(x)\mathfrak X_{[q,1]}(x).
 \]
 The only eigenvalue of $\mathfrak X_{[q,1]}(x)$ is $0$ with right eigenvector give by $e_1\in\R^q$, that zero entries but for 1 in the first entry.
 \[
 U_1^{[N-1,N]} B^{[N]}(0)e_1=0_{(N-1)\times 1}.
 \]
 and, consequently, 
 \[
 U_1^{[N-1,N]} \begin{bNiceMatrix}[cell-space-limits=1pt]
 	B^{(1)}_0(0)\\B^{(1)}_1(0)\\\Vdots\\B^{(1)}_{N-1}(0)
 \end{bNiceMatrix}=0_{(N-1)\times 1}
 \]
 Therefore,
 \[
 \begin{aligned}
 	U_{1,n}&=-\frac{B^{(1)}_{n+1}(0)}{B^{(1)}_{n}(0)}, & n&\in\{0,1,\dots,N-2\}.
 \end{aligned}
 \]

 Now, Equation \eqref{eq:U2} can be written 
   \begin{align*}
\begin{bNiceArray}{c}[margin,cell-space-limits=2pt]
\mathscr L_{N,(2,0)} ^{-1}\\\hline L^{[N+q-2,N]}_{(2,0)}
\end{bNiceArray}U_2U_1=\Lambda^{[N+q-2,N+q-1]}\Lambda^{[N+q-1,N+q]}\begin{bNiceArray}{c}[margin,cell-space-limits=2pt]
 I_N	\\\hline L^{[N+q-1,N]}\mathscr L_{N}
 	\end{bNiceArray} 	\mathscr L_{N}^{-1}
 \end{align*}
 and recalling that
  \[
 \mathscr L_{N+q-1,(2,0)} 
 \begin{bNiceArray}{c}[margin,cell-space-limits=2pt]
 	\mathscr L_{N,(2,0)}^{-1}\\\hline L^{[N+q-2,N]}_{(2,0)}
 \end{bNiceArray}=\begin{bNiceArray}{c}[margin,cell-space-limits=2pt]
 	I_N\\
 	0_{(q-2)\times N}
 \end{bNiceArray}
 \]
 we express it  as
    \begin{align*}
 	\begin{bNiceArray}{c}[margin,cell-space-limits=2pt]
 		I_N\\\hline 	0_{(q-2)\times N}
 	\end{bNiceArray}U_2U_1=\mathscr L_{N+q-2,(2,0)} \Lambda^{[N+q-2,N+q-1]}\Lambda^{[N+q-1,N+q]}\begin{bNiceArray}{c}[margin,cell-space-limits=2pt]
 		I_N	\\\hline L^{[N+q-1,N]}\mathscr L_{N}
 	\end{bNiceArray} 	\mathscr L_{N}^{-1}.
 \end{align*}
 Then, proceeding as we did  for $U_1$:
 \[
 \begin{aligned}
 	\begin{bNiceArray}{c}[margin,cell-space-limits=2pt]
 		I_N\\\hline 0_{(q-2)\times N}
 	\end{bNiceArray}U_2U_1B^{[N]}(x)&=\mathscr L_{N+q-2,(2,0)}	\Lambda^{[N+q-2,N+q-1]}\Lambda^{[N+q-1,N+q]}	\begin{bNiceArray}{c}[margin,cell-space-limits=2pt]
 		I_N\\\hline L^{[N+q-1,N]}\mathscr L_N
 	\end{bNiceArray} \mathscr L_N^{-1}B^{[N]}(x)\\&=\mathscr L_{N+q-2,(2,0)}	\Lambda^{[N+q-2,N+q-1]}\Lambda^{[N+q-1,N+q]}	\begin{bNiceArray}{c}[margin,cell-space-limits=2pt]
 		I_N\\\hline L^{[N+q-1,N]}\mathscr L_N
 	\end{bNiceArray} X_{[q]}^{[N]}(x)\\&=\mathscr L_{N+q-2,(2,0)}	\Lambda^{[N+q-2,N+q-1]}\Lambda^{[N+q-1,N+q]}	\begin{bNiceArray}{c}[margin,cell-space-limits=2pt]
 		X_{[q]}^{[N]}(x)\\\hline L^{[N+q-1,N]}B^{[N]}(x)
 	\end{bNiceArray} \\
 	&=\begin{bNiceMatrix}
 		\mathscr L_{N-2,(2,0)}& 0_{(N-2)\times q}\\
 		\mathscr l^{[N-2+q,N-2]}_{(2,0)}& \mathscr{L}_{[N-2,N-2+q],(2,0)}
 	\end{bNiceMatrix}\begin{bNiceArray}{c}[margin,cell-space-limits=2pt]
 		X_{[q]}^{[N-2]}(x)\mathfrak X^2_{[q,1]}(x)\\\hline L^{[N+q,N]}B^{[N]}(x)
 	\end{bNiceArray} 
 \end{aligned}
 \]
 Thus, if $(U_2U_1)^{[N-2,N]}$ is obtained by removing the last two rows in $U_2U_1$,
 we obtain 
 \[
(U_2U_1)^{[N-2,N]} B^{[N]}(x)=B_{(2,0)}^{[N-2]}(x)\mathfrak X_{[q,1]}^2(x).
 \]
  The only eigenvalue of $\mathfrak X^2_{[q,1]}(x)$ is $0$, which is double, and  with right eigenvector given by $e_1,e_2\in\R^q$, that zero entries but for 1 in the first and second entries, respectively
 \[
(U_2 U_1)^{[N-2,N]} B^{[N]}(0)\begin{bNiceArray}{c|c}
	e_1 &e_2
\end{bNiceArray}=0_{(N-2)\times 2}.
 \]
so that
 \[
(U_2 U_1)^{[N-2,N]} \begin{bNiceMatrix}[cell-space-limits=1pt]
 	B^{(1)}_0(0) &  	B^{(2)}_0(0) \\B^{(1)}_1(0)& 	B^{(2)}_1(0) \\\Vdots&\Vdots\\B^{(1)}_{N-1}(0)& 	B^{(2)}_{N-1}(0) 
 \end{bNiceMatrix}=0_{(N-2)\times 2}.
 \]
 That is, for $n\in\{0,1,\dots N-3\}$, 
 \begin{align*}
 	(U_2U_1)_{n,n} 	B^{(1)}_n(0) +	(U_2U_1)_{n,n+1} 	B^{(1)}_{n+1}(0)+		B^{(1)}_{n+2}(0)&=0,\\  
 	 	(U_2U_1)_{n,n} 	B^{(2)}_n(0) +	(U_2U_1)_{n,n+1} 	B^{(2)}_{n+1}(0)+		B^{(2)}_{n+2}(0)&=0.
 \end{align*}
 This system can be written as follows
 \[
 \begin{bNiceArray}{cc}
 (U_2U_1)_{n,n} &
 (U_2U_1)_{n,n+1}
 \end{bNiceArray}=-\begin{bNiceArray}{cc}
 B^{(1)}_{n+2}(0)&	B^{(2)}_{n+2}(0)
 \end{bNiceArray}\begin{bNiceArray}{cc}[cell-space-limits=1pt]
	B^{(1)}_n(0) & B^{(2)}_n(0) \\
	B^{(1)}_{n+1}(0)		& 	B^{(2)}_{n+1}(0)
 \end{bNiceArray}^{-1}
 \]
 and, consequently,
 \[
\begin{aligned}
	  (U_2U_1)_{n,n}&=-\begin{bNiceArray}{cc}
	  	B^{(1)}_{n+2}(0)&	B^{(2)}_{n+2}(0)
	  \end{bNiceArray}\begin{bNiceArray}{cc}[cell-space-limits=1pt]
	  	B^{(1)}_n(0) & B^{(2)}_n(0) \\
	  	B^{(1)}_{n+1}(0)		& 	B^{(2)}_{n+1}(0)
	  \end{bNiceArray}^{-1}\begin{bNiceArray}{c}
	  	1 \\ 0
	  \end{bNiceArray}=\Theta_*\begin{bNiceArray}{ccc}[cell-space-limits=1pt]
		B^{(1)}_n(0) & 	B^{(2)}_n(0)&1\\
	    B^{(1)}_{n+1}(0)& 	B^{(2)}_{n+1}(0)&0\\
		B^{(1)}_{n+2}(0)& 	B^{(2)}_{n+2}(0)&0
	  \end{bNiceArray},\\
	   (U_2U_1)_{n,n+1}&=-\begin{bNiceArray}{cc}
	   	B^{(1)}_{n+2}(0)&	B^{(2)}_{n+2}(0)
	   \end{bNiceArray}\begin{bNiceArray}{cc}[cell-space-limits=1pt]
	   	B^{(1)}_n(0) & B^{(2)}_n(0) \\
	   	B^{(1)}_{n+1}(0)		& 	B^{(2)}_{n+1}(0)
	   \end{bNiceArray}^{-1}\begin{bNiceArray}{c}
	   	0\\ 1
	   \end{bNiceArray}=\Theta_*\begin{bNiceArray}{ccc}[cell-space-limits=1pt]
	   	B^{(1)}_n(0) & 	B^{(2)}_n(0)&0\\
	   	B^{(1)}_{n+1}(0)& 	B^{(2)}_{n+1}(0)&1\\
	   	B^{(1)}_{n+2}(0)& 	B^{(2)}_{n+2}(0)&0
	   \end{bNiceArray}.
\end{aligned}
 \]
 Where $\Theta_*$ denotes the last quasideterminant \cite{OLVER}, that in the scalar case leads to the following expressions
 \begin{align*}
 	(U_2U_1)_{n,n}=U_{2,n}U_{1,n}&=\frac{\begin{vNiceArray}{cc}[cell-space-limits=1pt]
 	B^{(1)}_{n+1}(0)		& 	B^{(2)}_{n+1}(0) \\
 			B^{(1)}_{n+2}(0)		& 	B^{(2)}_{n+2}(0)
 	\end{vNiceArray}}{\begin{vNiceArray}{cc}[cell-space-limits=1pt]
 			B^{(1)}_{n}(0)		& 	B^{(2)}_{n}(0) \\
 			B^{(1)}_{n+1}(0)		& 	B^{(2)}_{n+1}(0)
 	\end{vNiceArray}},&
 \end{align*}
 for $n\in\{0,1,\dots N-3\}$.
 Hence, we find 
 \[
 \begin{aligned}
 	U_{2,n}&=-\frac{B^{(1)}_{n}(0)}{B^{(1)}_{n+1}(0)}\frac{\begin{vNiceArray}{cc}[cell-space-limits=1pt]
 			B^{(1)}_{n+1}(0)		& 	B^{(2)}_{n+1}(0) \\
 			B^{(1)}_{n+2}(0)		& 	B^{(2)}_{n+2}(0)
 	\end{vNiceArray}}{\begin{vNiceArray}{cc}[cell-space-limits=1pt]
 			B^{(1)}_{n}(0)		& 	B^{(2)}_{n}(0) \\
 			B^{(1)}_{n+1}(0)		& 	B^{(2)}_{n+1}(0)
 	\end{vNiceArray}}
 ,&n&\in\{0,1,\dots N-3\}.
 \end{aligned}
 \]

In general, it holds that 
\[
(U_b\cdots U_1)^{[N-b,N]}B^{[N]}(x)=B^{[N-b]}_{(b,0)}(x)\mathfrak{X}_{[q,1]}^b(x),
\]
 so that  \[
 (U_b\cdots U_1)^{[N-b,N]} \begin{bNiceArray}{ccc}[cell-space-limits=1pt]
 	B^{(1)}_0(0) &\Cdots  	&B^{(b)}_0(0) \\B^{(1)}_1(0)& 	\Cdots&B^{(b)}_1(0) \\\Vdots&&\Vdots\\\\B^{(1)}_{N-1}(0)& 	\Cdots&B^{(b)}_{N-1}(0) 
 \end{bNiceArray}=0_{(N-b)\times b}.
 \]
 This can be rewritten as
  \[ \begin{aligned}
  	(U_b\cdots U_1)_{n,n}&=-\begin{bNiceArray}{ccc}
 	B^{(1)}_{n+b}(0)&	\Cdots & B^{(b)}_{n+b}(0)
 \end{bNiceArray}\begin{bNiceArray}{ccc}[cell-space-limits=1pt]
 	B^{(1)}_n(0) & \Cdots &B^{(b)}_n(0) \\\Vdots &&\Vdots\\\\
 	B^{(1)}_{n+b-1}(0)	&\Cdots	& 	B^{(b)}_{n+b-1}(0)
 \end{bNiceArray}^{-1}\begin{bNiceArray}{c}
 	1 \\ 0\\\Vdots\\0
 \end{bNiceArray}\\&=(-1)^b\frac{\tau^B_{b,n+1}}{\tau^B_{b,n}},
  \end{aligned}\]
for $b\in\{1,\dots, q\},  n\in\{0,1,\dots,N-b-1\}.$ 
Therefore, we finally obtain the stated formula.

We now compute the $L_m$.
 Using \eqref{eq:ML1} we deduce that
  \[
 \begin{aligned}
 	A^{[N]}(x)	 L_1\begin{bNiceArray}{c|c}[margin,cell-space-limits=2pt]
 	I_N&  0_{N\times (p-1)}
 \end{bNiceArray}&=A^{[N]}(x)	\mathscr U_{N}^{-1} \begin{bNiceArray}{c|c}[margin,cell-space-limits=2pt]
 		I_N &  \mathscr U_{N}U^{[N,N+p]}
 	\end{bNiceArray} \left( \Lambda^{[N+p-1,N+p]} \right)^\top\mathscr U_{N+p-1,(0,1)}\\&= 	\left(X_{[p]}^{[N]}(x)\right)^\top	 \begin{bNiceArray}{c|c}[margin,cell-space-limits=2pt]
 	I_N &  \mathscr U_{N}U^{[N,N+p]}
 	\end{bNiceArray} \left( \Lambda^{[N+p-1,N+p]} \right)^\top\mathscr U_{N+p-1,(0,1)}\\
 	&=	 \begin{bNiceArray}{c|c}[margin,cell-space-limits=2pt]
 		\left(X_{[p]}^{[N]}(x)\right)^\top &  A^{[N]}(x)U^{[N,N+p]}
 	\end{bNiceArray} \left( \Lambda^{[N+p-1,N+p]} \right)^\top\mathscr U_{N+p-1,(0,1)}
 	\\&=\begin{bNiceArray}{c|c}[margin,cell-space-limits=2pt]
 	\left(\mathfrak X_{[p,1]}(x)\right)^\top	\left(X_{[p]}^{[N-1]}(x)\right)^\top &  A^{[N]}(x)U^{[N,N+p]}
 	\end{bNiceArray} \mathscr U_{N+p-1,(0,1)}
 		\\&=\begin{multlined}[t][.6\textwidth]
 			\begin{bNiceArray}{c|c}[margin,cell-space-limits=2pt]
 		\left(\mathfrak X_{[p,1]}(x)\right)^\top	\left(X_{[p]}^{[N-1]}(x)\right)^\top &  A^{[N]}(x)U^{[N,N+p]}
 	\end{bNiceArray}  	\\\times \begin{bNiceMatrix}
 	\mathscr U_{N-1,(0,1)} & \mathscr u^{[N+p-1,N-1]}_{(0,1)} \\
 	0_{(p-1)\times (N-1)}	&\mathscr U_{[N+p-1,N-1],(1,1,0)}
 	\end{bNiceMatrix}
 		\end{multlined}
 \end{aligned}
 \]
 Hence, if we denote by $L_1^{[N,N-1]}$ the truncated matrix obtained from $L_1$ by erasing the last column, we get
 \[
 A^{[N]}(x)L_1^{[N,N-1]}=\left(\mathfrak X_{[p,1]}(x)\right)^\top A^{[N-1]}_{(0,1)}(x).
 	\]
Taking into account that for the perturbation polynomial its $\det\left(\mathfrak X_{[p]}(x)\right)^\top $ has only a root  that is located at  $x=0$,  and its left eigenvector is $e_1^\top\in(\R^p)^*$, that has all its components equal to zero but for the first which equals one, we have
 \[
e_1^\top A^{[N]}(0)L_1^{[N,N-1]}=e_1^\top\left(\mathfrak X_{[p]}(x)\right)^\top A^{[N-1]}_{(0,1)}(x)=0_{1\times (N-1)}.
\]
and, consequently we obtain
\begin{align*}
	\begin{bNiceMatrix}
		A^{(1)}_0(0)&A^{(1)}_1(0)&\Cdots &A^{(1)}_{N-1}(0)
	\end{bNiceMatrix}L_1^{[N,N-1]}=0_{1\times (N-1)},
\end{align*}
that leads to
\[
\begin{aligned}
	L_{1,n+1}&=-\frac{A^{(1)}_{n}(0)}{A^{(1)}_{n+1}(0)}, & n&\in\{0,\dots,N-2\}.
\end{aligned}
\]
For the next bidiagonal matrix $L_2$ we use
  \[\begin{multlined}[t][\textwidth]
	A^{[N]}(x)	 L_1L_2\begin{bNiceArray}{c|c}[margin,cell-space-limits=2pt]
		I_N&  0_{N\times (p-2)}
	\end{bNiceArray}\\=A^{[N]}(x)	\mathscr U_{N}^{-1} \begin{bNiceArray}{c|c}[margin,cell-space-limits=2pt]
		I_N &  \mathscr U_{N}U^{[N,N+p-1]}
	\end{bNiceArray} \left( \Lambda^{[N+p-1,N+p]} \right)^\top \left( \Lambda^{[N+p-2,N+p-1]} \right)^\top\mathscr U_{N+p-2,(0,2)}
\end{multlined}
\]
that implies
 \[
A^{[N]}(x)(L_1L_2)^{[N,N-2]}=\left(\mathfrak X_{[p,1]}^2(x)\right)^\top A^{[N-2]}_{(0,2)}(x).
\]
Consequently, 
\[
\begin{aligned}
\begin{bNiceArray}{cccc}[cell-space-limits=1pt]
	A^{(1)}_0(0)&A^{(1)}_1(0)&\Cdots &A^{(1)}_{N-1}\\
	A^{(2)}_0(0)&A^{(2)}_1(0)&\Cdots &A^{(2)}_{N-1}
\end{bNiceArray}(L_1L_2)^{[N,N-2]}=0_{2\times (N-2)}.
\end{aligned}\]
Hence, for $n\in\{0,1,\dots,N-3\}$ we get
\begin{align*}
		A^{(1)}_{n+2}(0)(L_1L_2)_{n+2,n}+	A^{(1)}_{n+1}(0)(L_1L_2)_{n+1,n}+A^{(1)}_{n}(0)&=0,\\
		A^{(2)}_{n+2}(0)(L_1L_2)_{n+2,n}+	A^{(2)}_{n+1}(0)(L_1L_2)_{n+1,n}+A^{(2)}_{n}(0)&=0,
\end{align*}
or
\[
\begin{aligned}
\begin{bNiceArray}{c}
	(L_1L_2)_{n+2,n}\\
	(L_1L_2)_{n+1,n}
	\end{bNiceArray}	=	-\begin{bNiceArray}{cc}[cell-space-limits=2pt]
	A^{(1)}_{n+2}(0) &A^{(1)}_{n+1}(0)\\
	A^{(2)}_{n+2}(0)&A^{(2)}_{n+1}(0)
	\end{bNiceArray}^{-1}\begin{bNiceArray}{cc}
	A^{(1)}_{n}(0) \\
	A^{(2)}_{n}(0)
	\end{bNiceArray}.	
\end{aligned}
\]
The solution is
\[
\begin{aligned}
	(L_1L_2)_{n+2,n}&=	-\begin{bNiceMatrix}
		1&0
	\end{bNiceMatrix}\begin{bNiceArray}{cc}[cell-space-limits=2pt]
		A^{(1)}_{n+2}(0) &A^{(1)}_{n+1}(0)\\
		A^{(2)}_{n+2}(0)&A^{(2)}_{n+1}(0)
	\end{bNiceArray}^{-1}\begin{bNiceArray}{cc}
		A^{(1)}_{n}(0) \\
		A^{(2)}_{n}(0)
	\end{bNiceArray}=\frac{\begin{vNiceArray}{cc}[cell-space-limits=2pt]
			A^{(1)}_{n+1}(0) &A^{(1)}_{n}(0)\\
			A^{(2)}_{n+1}(0)&A^{(2)}_{n}(0)
			\end{vNiceArray}}{\begin{vNiceArray}{cc}[cell-space-limits=2pt]
		A^{(1)}_{n+2}(0) &A^{(1)}_{n+1}(0)\\
		A^{(2)}_{n+2}(0)&A^{(2)}_{n+1}(0)
		\end{vNiceArray}},\\
(L_1L_2)_{n+1,n}&=-\begin{bNiceMatrix}
	0&1
\end{bNiceMatrix}\begin{bNiceArray}{cc}[cell-space-limits=2pt]
A^{(1)}_{n+2}(0) &A^{(1)}_{n+1}(0)\\
A^{(2)}_{n+2}(0)&A^{(2)}_{n+1}(0)
\end{bNiceArray}^{-1}\begin{bNiceArray}{cc}
A^{(1)}_{n}(0) \\
A^{(2)}_{n}(0)
\end{bNiceArray}=-\frac{\begin{vNiceArray}{cc}[cell-space-limits=2pt]
		A^{(1)}_{n+2}(0) &A^{(1)}_{n}(0)\\
		A^{(2)}_{n+2}(0)&A^{(2)}_{n}(0)
		\end{vNiceArray}}{\begin{vNiceArray}{cc}[cell-space-limits=2pt]
	A^{(1)}_{n+2}(0) &A^{(1)}_{n+1}(0)\\
	A^{(2)}_{n+2}(0)&A^{(2)}_{n+1}(0)
	\end{vNiceArray}}.
\end{aligned}\]
As 
\[
(L_1L_2)_{n+2,n}=L_{1,n+2}L_{2,n+1}
\]
we obtain that
\[
\begin{aligned}
	L_{2,n+1}&=-\frac{A^{(1)}_{n+2}(0)}{A^{(1)}_{n+1}(0)}\frac{\begin{vNiceArray}{cc}[cell-space-limits=2pt]
		A^{(1)}_{n+1}(0) &A^{(1)}_{n}(0)\\
		A^{(2)}_{n+1}(0)&A^{(2)}_{n}(0)
\end{vNiceArray}}{\begin{vNiceArray}{cc}[cell-space-limits=2pt]
		A^{(1)}_{n+2}(0) &A^{(1)}_{n+1}(0)\\
		A^{(2)}_{n+2}(0)&A^{(2)}_{n+1}(0)
\end{vNiceArray}},& n&\in\{0,\dots,N-3\}.
\end{aligned}
\]

In general, it holds that 
\[
A^{[N]}(x)(L_1\cdots L_a)^{[N,N-a]}=\left(\mathfrak{X}_{[p,1]}^a(x)\right)^\top A^{[N-a]}_{(a,0)}(x),
\]
so that  \[
\begin{bNiceArray}{cccc}[cell-space-limits=1pt]
A^{(1)}_0(0)&A^{(1)}_1(0)&\Cdots & A^{(1)}_{N-1}(0)\\
\Vdots &\Vdots & &\Vdots\\
A^{(a)}_0(0)&A^{(a)}_1(0)&\Cdots & A^{(a)}_{N-1}(0)
\end{bNiceArray}(L_1\cdots L_a)^{[N,N-a]} =0_{a\times (N-a)}.
\]
that can be rewritten as
\[ \begin{aligned}
	(L_1\cdots L_j)_{n+a,n}&=-\begin{bNiceArray}{cccc}
		1 & 0&\Cdots&0
	\end{bNiceArray}\begin{bNiceArray}{ccc}[cell-space-limits=1pt]
		A^{(1)}_{n+a-1}(0) & \Cdots &A^{(1)}_n(0) \\\Vdots &&\Vdots\\\\
		A^{(a)}_{n+a-1}(0)	&\Cdots	& 	A^{(a)}_{n}(0)
	\end{bNiceArray}^{-1}\begin{bNiceArray}{c}
	A^{(1)}_{n}(0)\\	\Vdots \\ A^{(p)}_{n}(0)
	\end{bNiceArray}\\&=(-1)^a\frac{\tau^A_{a,n}}{\tau^A_{a,n+1}},
\end{aligned}\]
for $a\in\{1,\dots, q\},  n\in\{0,1,\dots,N-a-1\}$. Note that
\[
(L_1\dots L_a)_{n+a,n}=(L_1\dots L_{a-1})_{n+a,n+1}L_{a,n+1}
\]
and we obtain the desired result.
\end{proof}
\begin{rem}
 An expression for $U_{1,N-1}$ is missing in this formula. However, we can get an expression for it. In the one hand, 
		   the $N$-th entry of $U_1B^[N](0)e_1$ is $U_{1,N-1}B^{(1)}_{N-1}(0)$. In the other hand, this last entry must be the first entry of 
		   \[
		 \left(  	\mathscr l^{[N-1+q,N-1]}_{(1,0)}X^{[N-1]}(x)\mathfrak X_{[q,1]}(0)+ \mathscr{L}_{[N-1,N-1+q],(1,0)} L^{[N+q,N]}B^{[N]}(0)\right)e_1,
		   \]
		   and as $\mathfrak X_{[q,1]}(0)e_1$ is the zero-vector we get that we are only interested  in the first entry 
		   \[
		e_1^\top   \mathscr{L}_{[N-1,N-1+q],(1,0)} L^{[N+q,N]}B^{[N]}(0)
		   e_1.\]
		   Hence, as $  \mathscr{L}_{[N-1,N-1+q],(1,0)} $ is a lower unitriangular, it does not intervene in the first entry , so we finally obtain
		   \[
		U_{1,N-1}=\frac{1}{B^{(1)}_{N-1}(0)}e_1^\top\int x^{N_q} \mathfrak{X}_{[q,s_q]}(x)  \d\mu(x) A^{[N]}(x)\begin{bNiceArray}{c}[cell-space-limits=2pt]
				B^{(1)}_0(0)\\B^{(1)}_2(0)\\\Vdots\\B^{(1)}_{N-1}(0)
			\end{bNiceArray},
			   \]
			   were \eqref {eq:L} has been used.
\end{rem}
\begin{rem}
	From
	\begin{align*}
			(U_2U_1)_{n,n+1}=U_{2,n}+U_{1,n+1}&=-\frac{\begin{vNiceArray}{cc}[cell-space-limits=1pt]
				 		B^{(1)}_n(0) & B^{(2)}_n(0) \\
				 		B^{(1)}_{n+2}(0)		& 	B^{(2)}_{n+2}(0)
				 \end{vNiceArray}}{\begin{vNiceArray}{cc}[cell-space-limits=1pt]
				 		B^{(1)}_{n}(0)		& 	B^{(2)}_{n}(0) \\
				 		B^{(1)}_{n+1}(0)		& 	B^{(2)}_{n+1}(0)
				 \end{vNiceArray}},
	\end{align*}
	we derive  alternative expressions
	\[
	\begin{aligned}
		U_{2,n}&
		=-\frac{\begin{vNiceArray}{cc}[cell-space-limits=1pt]
				 	B^{(1)}_n(0) & B^{(2)}_n(0) \\
				 	B^{(1)}_{n+2}(0)		& 	B^{(2)}_{n+2}(0)
				 \end{vNiceArray}}{\begin{vNiceArray}{cc}[cell-space-limits=1pt]
				 B^{(1)}_{n}(0)		& 	B^{(2)}_{n}(0) \\
				 B^{(1)}_{n+1}(0)		& 	B^{(2)}_{n+1}(0)
				\end{vNiceArray}}+\frac{B^{(1)}_{n+2}(0)}{B^{(1)}_{n+1}(0)}
				,&n&\in\{0,1,\dots N-3\}.
			\end{aligned}
			\]
			
\end{rem}
\section{A study case: Hahn multiple orthogonal polynomials}

In this section  we put $q=1$, while $p$ remains arbitrary. That is we are dealing with multiple orthogonality, which is the standard one and not of mixed type,
Now, the step-line recursion relations read
\[	
	\begin{aligned}
		x B_{n}(x) &= B_{n + 1}(x) + b_{n}^0 B_{n}(x) + \sum_{j=1}^p b^j_{n} B_{n -j}(x),& n&\in\{0,1,\dots,N-2\},
		\\
		x A^{(a)}_{n}(x) &= A^{(a)}_{n - 1}(x) + b_{n }^0 A^{(a)}_{n}(x) + \sum_{j=1}^p b^j_{n+j} A^{(a)}_{n+j}(x),& n&\in\{0,1,\dots,N-p-1\}
		,& a&\in\{0,1,\ldots,p\}
	\end{aligned}
\]
with \(B_{-1},\dots,B_{-p}= 0\) and \(A^{(1)}_{-1},\ldots,A^{(p)}_{-1}= 0\).
and the recursion matrix is 
\[	\begin{aligned}
\mathscr	T_{N-p}\coloneq{\begin{bNiceMatrix}
			b^{0}_0 & 1 & 0&\Cdots &&&&0\\
			b^{1}_1 & b^{0}_1 & 1 &\Ddots&&&&\Vdots\\
			&b^{1}_{2}&b^{0}_{2}&\Ddots&&&&\\  \Vdots&&\Ddots[shorten-end=-10pt]&\Ddots&&&&\\
			b^{p}_p&&&&&&&\\
			0&&&&&&&0\\
			\Vdots&\Ddots[shorten-end=-3pt]&\Ddots[shorten-end=-5pt]&&&&&1\\ 0&\Cdots&0&b^{p}_{N-p-1}&\Cdots&&b^{1}_{N-p-1}&b^{0}_{N-p-1}
	\end{bNiceMatrix}}
\end{aligned}\]

Before delving into the multiple Hahn polynomials, it's important to recall that these polynomials can be represented using generalized hypergeometric series \cite{andrews,slater},
\begin{align}
	\label{GHF}
	\pFq{p}{q}{a_1,\ldots,a_p}{\alpha_1,\ldots,\alpha_q}{x}\coloneqq \sum_{l=0}^{\infty}\dfrac{(a_1)_l\cdots(a_p)_l}{(\alpha_1)_l\cdots(\alpha_q)_l}\dfrac{x^l}{l!},
\end{align}
with the Pochhammer symbols given by
\begin{align*}
	(x)_n\coloneqq\dfrac{\Gamma(x+n)}{\Gamma(x)}=\begin{cases}
		x(x+1)\cdots(x+n-1)\;\text{if}\;n\in\N,\\
		1\;\text{if}\;n=0.
	\end{cases}
\end{align*}

The weight functions for the Hahn family are defined as
\begin{equation}
	\label{WeightsHahn}
	\begin{aligned}
		w_{i}(x;\alpha_i,\beta,N)&=\dfrac{\Gamma(\alpha_i+x+1)}{\Gamma(\alpha_i+1)\Gamma(x+1)}\dfrac{\Gamma(\beta+N-x+1)}{\Gamma(\beta+1)\Gamma(N-x+1)}, & i &\in\{1,\ldots,p\},& \Delta &=\{0,\ldots,N\},
	\end{aligned}
\end{equation}
with
\( \alpha_1,\ldots,\alpha_p,\beta>-1\). We need a vector on nonnegative integers indexes $\vec n=(n_1,\dots, n_p)$, with $n_i\in\N_0$ In  order to have an AT system, we require that \( \sz{n} \leqslant N\in\mathbb N_0\) and \( \alpha_i-\alpha_j\not\in\mathbb Z\) for \( i\neq j\). This ensures the existence of the orthogonal polynomial sup to $|\vec n|=N$. We will denote \(\vec{\alpha}\coloneq(\alpha_1,\ldots,\alpha_p)\).

The corresponding type II polynomials were first found for \( p=2\) in \cite{Arvesu} and then generalized for \( p \geqslant 2\) in~\cite{AskeyII}. 
In \cite{BDFMW}, the following alternative representation was proven
\begin{multline}
	\label{HahnTypeIIWeighted}
	B_{\vec{n}}(x;\vec{\alpha},\beta,N)
	=
	{
		(-1)^{|\vec{n}|}}\dfrac{\Gamma(N+\beta+1)}{(N-|\vec{n}|)!}\prod_{i=1}^p\dfrac{(\alpha_i+1)_{n_i}}{(\alpha_i+\beta+|\vec{n}|+1)_{n_i}} \dfrac{\Gamma(N-x+1)}{\Gamma(\beta+N-x+1)} \\
	\times \pFq{p+2}{p+1}{-|\vec{n}|-\beta,-x,\vec{\alpha}+\vec{n}+\vec{1}_p}{-N-\beta,\vec{\alpha}+\vec{1}_p}{1}.
\end{multline}
For $p=2$, the first explicit hypergeometric representation of the Hahn polynomials of type I  were given in \cite{JMAA}. Then, the case for \( p \geqslant 2 \) was presented in  in \cite{BDFMW} and reads as follows:
\begin{multline}
	\label{HahnTypeI}
	A_{\vec n}^{(a)}(x;\vec{\alpha},\beta,N)
	=\dfrac{(-1)^{|\vec{n}|-1}(N+1-|\vec{n}|)!}{(n_a-1)!(\beta+1)_{|\vec{n}|-1}(\alpha_a+\beta+|\vec{n}|)_{N+2-|\vec{n}|}}\dfrac{\prod_{k=1}^p({\alpha}_k+\beta+|\vec{n}|)_{n_k}}{\prod_{k=1,k\neq a}^p({\alpha}_k-{\alpha}_a)_{{n}_k}}\\
	\times\pFq{p+2}{p+1}{-n_a+1,\alpha_a+\beta+|\vec{n}|,(\alpha_a+1)\vec{1}_{p-1}-\vec{\alpha}^{*a}-\vec{n}^{*a},x+\alpha_a+1}{\alpha_a+1,(\alpha_a+1)\vec{1}_{p-1}-\vec{\alpha}^{*a},\alpha_a+\beta+N+2}{1}.
\end{multline}

\begin{rem}
	Notice that we used the following notation. We denoted the unit vector by
	\(\vec{1}_p\coloneq (1,\ldots,1)\in\mathbb R^p\). Given a vector $\vec{v}\in\mathbb R^p$, we wrote $\vec{v}^{\ast q}\in\mathbb R^{p-1}$ for the vector obtained from $\vec{v}$ after removing its $q$-th entry. 
\end{rem}
The evaluations at $x=0$ give
\begin{equation}
	\label{HahnTypeIIWeighted0}
	B_{\vec{n}}(0)
	=
		(-1)^{|\vec{n}|}\dfrac{N!}{(N-|\vec{n}|)!}
		\prod_{i=1}^p\dfrac{(\alpha_i+1)_{n_i}}{(\alpha_i+\beta+|\vec{n}|+1)_{n_i}} 
\end{equation}
and
\begin{multline}
	\label{HahnTypeI}
	A_{\vec n}^{(a)}(0)
	=\dfrac{(-1)^{|\vec{n}|-1}(N+1-|\vec{n}|)!}{(n_a-1)!(\beta+1)_{|\vec{n}|-1}(\alpha_a+\beta+|\vec{n}|)_{N+2-|\vec{n}|}}\dfrac{\prod_{k=1}^p({\alpha}_k+\beta+|\vec{n}|)_{n_k}}{\prod_{k=1,k\neq a}^p({\alpha}_k-{\alpha}_a)_{{n}_k}}\\
	\times\pFq{p+1}{p}{-n_a+1,\alpha_a+\beta+|\vec{n}|,(\alpha_a+1)\vec{1}_{p-1}-\vec{\alpha}^{*a}-\vec{n}^{*a}}{(\alpha_a+1)\vec{1}_{p-1}-\vec{\alpha}^{*a},\alpha_a+\beta+N+2}{1}.
\end{multline}

The indices considered are general. The step-line corresponds to considering multi-index of the form
\[\vec{n}=(\underbrace{m+1,\ldots,m+1}_{s\,\text{times}},\underbrace{m,\ldots,m}_{p-s\,\text{times}}),\quad m\in\N_0,\quad s\in\{0,\ldots,p-1\}\]
This way, these multi-index can be ordered in a sequence such that the modulus \[|\vec{n}|=pm+s
\]
increase by \(1\) as follows
\[\left\lbrace(0,0,\ldots,0),(1,0,\ldots,0),(1,1,\ldots,0),\ldots,(1,1,\ldots,1),(2,1,\ldots,1),\ldots\right\rbrace\]

This is what is known as the step-line and allows to relabel the multiple polynomials 
\[\begin{aligned}
	B_{pm+s}&\coloneq B_{(m+1,\ldots,m+1,m,\ldots,m)}\\
	A^{(a)}_{pm+s-1}&\coloneq A^{(a)}_{(m+1,\ldots,m+1,m,\ldots,m)},& a&\in\{1,\ldots,p\}
\end{aligned}\]

For the recurrence coefficients in the step-line we  introduce the following notation.
	Consider \(\alpha_1,\dots,\alpha_p\in\R\) then
	\begin{equation}
		\label{cicle}
	\begin{aligned}
			\alpha_{i+pn}&\coloneq\alpha_i+n,& i&\in\{1,\ldots,r\},& n&\in\mathbb N_0
	\end{aligned}
	\end{equation}
For example
\[
\begin{aligned}
	\alpha_{p+1}&\coloneq\alpha_1+1, &\alpha_{p+2}&\coloneq\alpha_{2}+1,&\dots,&& \alpha_{2p}&\coloneq\alpha_p+1, &\alpha_{2p+1}&\coloneq\alpha_1+2,\dots
\end{aligned}
\]
This cyclic notation is  useful to get an compact expression for the recurrence coefficients in the step-line 
\begin{multline*}
	b_{pm+k}^j=-(\alpha_{k+1}+m+1)\delta_{j,0}
	+
	\dfrac{(N-pm-k+1)_{j}(\beta+pm+k+1-j)_{j}({\alpha}_{k+1}+\beta+(p+1)m+k+1-j)}{{\prod}_{l=p+k+2-j}^{p+k+1}({\alpha}_l+\beta+(p+1)m+k-j)}\\ \times
\dfrac{\prod_{l=1}^p({\alpha}_l+\beta+pm+k+1-j)_{j}}{\prod_{l=k+1}^{p+k}(\alpha_l+\beta+(p+1)m+k+1-j)_{j}}\sum_{i=k+1}^{p+k+1-j}
	\dfrac{(\alpha_i+m)(\alpha_i+\beta+N+m+1)}{(\alpha_i+\beta+(p+1)m+k-j)_{j+2}}
	\tfrac{\prod_{l=1}^p(\alpha_i-\alpha_l+m)}{\prod_{l=k+1,l\neq i}^{p+k+1-j}(\alpha_i-{\alpha}_l)}
\end{multline*}
for \(j\in\{0,1,\dots,p\}\), \(m\geqslant0\) and \(k\in\{0,\dots,p-1\}\). This appears for the first time in the PhD thesis \cite{tesis}. The symbol ${\prod}_{l=p+k+2-j}^{p+k+1}$ for $j\geq 1$ is  the standard product and for   $j=0$ is $1$.

\subsection{Two weights}

Let us additionally assume that $p=2$.  Therefore our matrices have one superdiagonal and two subdiagonals, and we are looking for a factorization of the form $\mathscr T_{N-2}=L_1L_2U_1$. 

\begin{teo}\label{th:Hahn_now}
	The  Hahn multiple orthogonal polynomials with two weights we have the 
	factorization
	\[
\mathscr T_{N-2}=L_1L_2 U_1
	\] 
	in terms of two lower bidiagonal matrices $L_1$ and $L_2$ and an upper unitriangular matrix $U_1$. The corresponding entries of these matrices are
	\[
	\begin{aligned}
		U_{1,2n}&=\tfrac{(N-2n)(\alpha_1+n+1)(\alpha_1+\beta+2n+1)(\alpha_2+\beta+2n+1)}{
			(\alpha_1+\beta+3n+1)_2(\alpha_2+\beta+3n+1)},\\
		U_{1,2n+1}&=\tfrac{(N-2n-1)(\alpha_2+n+1)(\alpha_1+\beta+2n+2)(\alpha_2+\beta+2n+2)}{(\alpha_1+\beta+3n+3)(\alpha_2+\beta+3n+2)_2},\\
		L_{1,2n}
		&=\tfrac{(N+1-2n)n(\beta+2n)(\alpha_2+\beta+2n)}{(\alpha_1+\beta+3n)_2(\alpha_2+\beta+3n)}
		\tfrac{
			\pFq{3}{2}{-n+1,\alpha_1+\beta+2n,\alpha_1-\alpha_2-n+1}{\alpha_1-\alpha_2+1,\alpha_1+\beta+N+2}{1}}{
			\pFq{3}{2}{-n,\alpha_1+\beta+2n+1,\alpha_1-\alpha_2-n+1}{\alpha_1-\alpha_2+1,\alpha_1+\beta+N+2}{1}}
		,\\
		L_{1,2n+1}&=\tfrac{(N-2n)(\beta+2n+1)(\alpha_2-\alpha_1+n)(\alpha_2+\beta+2n+1)}{(\alpha_1+\beta+3n+2)(\alpha_2+\beta+3n+1)_2}
		\tfrac{\pFq{3}{2}{-n,\alpha_1+\beta+2n+1,\alpha_1-\alpha_2-n+1}{\alpha_1-\alpha_2+1,\alpha_1+\beta+N+2}{1}}{\pFq{3}{2}{-n,\alpha_1+\beta+2n+2,\alpha_1-\alpha_2-n}{\alpha_1-\alpha_2+1,\alpha_1+\beta+N+2}{1}},\\
		L_{2,2n}&=\tfrac{n(\beta+2n)(\alpha_1+\beta+2n+1)(\alpha_2+\beta+N+n+1)}{(\alpha_1+\beta+3n+1)(\alpha_2+\beta+3n)_2}	\tfrac{\pFq{3}{2}{-n,\alpha_1+\beta+2n+2,\alpha_1-\alpha_2-n}{\alpha_1-\alpha_2+1,\alpha_1+\beta+N+2}{1}}{\pFq{3}{2}{-n,\alpha_1+\beta+2n+1,\alpha_1-\alpha_2-n+1}{\alpha_1-\alpha_2+1,\alpha_1+\beta+N+2}{1}},\\
		L_{2,2n+1}&=
			\tfrac{(\beta+2n+1)(\alpha_1+\beta+2n+2)(\alpha_1+\beta+N+n+2)(\alpha_1-\alpha_2+n+1)}{(\alpha_1+\beta+3n+2)_2(\alpha_2+\beta+3n+2)} 
		\tfrac{
			\pFq{3}{2}{-n-1,\alpha_1+\beta+2n+3,\alpha_1-\alpha_2-n}{\alpha_1-\alpha_2+1,\alpha_1+\beta+N+2}{1}}{
			\pFq{3}{2}{-n,\alpha_1+\beta+2n+2,\alpha_1-\alpha_2-n}{\alpha_1-\alpha_2+1,\alpha_1+\beta+N+2}{1}}.
		\end{aligned}\]
\end{teo}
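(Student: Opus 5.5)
The plan is to apply Theorem~\ref{th:bidiagonal_Christoffel} with $q=1$, $p=2$. In that case $\tau^B_{0,n}=1$ and $\tau^B_{1,n}=B_n(0)$, so
\[
U_{1,n}=-\frac{B_{n+1}(0)}{B_n(0)} .
\]
For the lower factors, $\tau^A_{1,n}=A^{(1)}_n(0)$ and $\tau^A_{2,n}=A^{(1)}_{n+1}(0)A^{(2)}_n(0)-A^{(1)}_n(0)A^{(2)}_{n+1}(0)$. This gives
\[
L_{1,n+1}=-\frac{A^{(1)}_n(0)}{A^{(1)}_{n+1}(0)},\qquad
L_{2,n+1}=-\frac{A^{(1)}_{n+2}(0)\,\tau^A_{2,n}}{A^{(1)}_{n+1}(0)\,\tau^A_{2,n+1}} .
\]
The existence hypotheses (nonvanishing of the $\tau$'s) should follow from the AT property of the Hahn system together with the Mañas--Rojas existence theorem. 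Alternatively, we can check them a posteriori from the explicit nonzero formulas, for generic parameters.

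For $U_1$ we use the closed form \eqref{HahnTypeIIWeighted0} on the step-line. We split into the cases $n\to 2n$, where $\vec n=(n,n)\to(n+1,n)$, and $n\to 2n+1$, where $(n+1,n)\to(n+1,n+1)$. In each case the ratio $B_{n+1}(0)/B_n(0)$ is a quotient of Pochhammer products. The change of $|\vec n|$ inside $(\alpha_i+\beta+|\vec n|+1)_{n_i}$ produces the factors $(\alpha_i+\beta+3n+\cdot)$ in the denominators and $(\alpha_i+\beta+2n+\cdot)$ in the numerators, while $N!/(N-|\vec n|)!$ produces the factor $(N-2n)$ or $(N-2n-1)$. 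This step is routine simplification.

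For $L_1$ we use \eqref{HahnTypeI} at $x=0$ with $a=1$. With $p=2$ the series is a ${}_3F_2(1)$ with parameters
\[
(-n_1+1,\ \alpha_1+\beta+|\vec n|,\ \alpha_1-\alpha_2-n_2+1;\ \alpha_1-\alpha_2+1,\ \alpha_1+\beta+N+2).
\]
Recall the step-line labels: $A_{2m-1}\leftrightarrow(m,m)$ and $A_{2m}\leftrightarrow(m+1,m)$. The ratio $A^{(1)}_{n}(0)/A^{(1)}_{n+1}(0)$ is therefore a Gamma/Pochhammer prefactor ratio times a ratio of two ${}_3F_2(1)$. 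This is exactly the shape of the stated $L_{1,2n}$ and $L_{1,2n+1}$, so here we only need to simplify the prefactors. This includes the factor $1/(n_1-1)!$, which gives the $n$ in $L_{1,2n}$, and the factor $(\alpha_2-\alpha_1)_{n_2}$, which gives the $(\alpha_2-\alpha_1+n)$ in $L_{1,2n+1}$.

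The main obstacle is $L_2$, because $\tau^A_{2,n}$ is a $2\times2$ determinant mixing $A^{(1)}$ and $A^{(2)}$. These involve different ${}_3F_2$ series, with $a=2$ swapping the roles of $\alpha_1$ and $\alpha_2$, and the target formula contains only ${}_3F_2$'s of the $a=1$ type. I would avoid computing $\tau^A_{2,n}$ directly. Instead I would use \eqref{eq:tau_A-B} with $q=1$, $p=2$:
\[
\tau^A_{2,n}=\pm\frac{B_n(0)}{\mathscr T_{2,0}\cdots\mathscr T_{n+1,n-1}} .
\]
The product consists of the explicitly known coefficients $b^2_{k}$ from the step-line formula above. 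Then
\[
\frac{\tau^A_{2,n}}{\tau^A_{2,n+1}}=\pm\,b^2_{n+2}\,\frac{B_n(0)}{B_{n+1}(0)},
\]
and $L_{2,n+1}$ becomes $b^2_{n+2}\,A^{(1)}_{n+2}(0)/\bigl(A^{(1)}_{n+1}(0)\,U_{1,n}\bigr)$, up to sign. As a cross-check, this is the same as reading the relation $b^2_{n+2}=L_{1,n+2}L_{2,n+1}$ off the second subdiagonal of $L_1L_2U_1$. Hence $L_2$ only requires the already computed $L_1$ and the explicit $b^2$. What remains is to simplify $b^2_{2m+k}$ (sum over $i$ reduces to a single term when $j=p=2$) against the prefactors, and to fix the signs via $(-1)^{(p-1)(n+1)}$ and $(-1)^{(q-1)n}$. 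If the $b^2$ simplification is messy, the fallback is the contiguous relations of \cite{LAA,JMAA}, which express $A^{(2)}$ in terms of $A^{(1)}$-type ${}_3F_2$'s.
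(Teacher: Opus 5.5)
Your proposal follows the paper's proof: apply Theorem~\ref{th:bidiagonal_Christoffel}, read $U_1$ and $L_1$ off the explicit step-line values at $x=0$, and avoid the mixed determinant $\tau^A_{2,n}$ by using \eqref{eq:tau_A-B}, which gives $L_{2,n+1}=b^2_{n+2}/(L_{1,n+2}U_{1,n})$ with the explicit $b^2$; this is exactly how the paper obtains $L_{2,2n}$ and $L_{2,2n+1}$. One small correction: your cross-check relation should read $b^2_{n+2}=L_{1,n+2}L_{2,n+1}U_{1,n}$, since the second-subdiagonal entry of $L_1L_2U_1$ picks up the diagonal entry of $U_1$, and this agrees with your own formula for $L_{2,n+1}$.
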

\begin{proof}
	We first note that
\[\begin{aligned}
	B_{2m}&= B_{(m,m)}, & 	B_{2m+1}&= B_{(m+1,m)}&
	A^{(1)}_{2m-1}&= A^{(1)}_{(m,m)},&A^{(1)}_{2m}&= A^{(1)}_{(m+1,m)},&
	A^{(2)}_{2m-1}&= A^{(a)}_{(m,m)},&A^{(2)}_{2m}&= A^{(2)}_{(m+1,m)}.
\end{aligned}\]
From \eqref{HahnTypeIIWeighted0} we get
\[
\begin{aligned}
		B_{2n}(0)
	&=
\tfrac{N!(\alpha_1+1)_{n}(\alpha_2+1)_{n}}{(N-2n)!(\alpha_1+\beta+2n+1)_{n}(\alpha_2+\beta+2n+1)_{n}},&
		B_{2n+1}(0)
	&	=
-\tfrac{N!(\alpha_1+1)_{n+1}(\alpha_2+1)_{n}}{(N-2n-1)!(\alpha_1+\beta+2n+2)_{n+1}(\alpha_2+\beta+2n+2)_{n}}
\end{aligned}
\]
and
\begin{align*}
	A_{2n-1}^{(1)}(0)
&	=\tfrac{-(N+1-2n)!{\alpha}_1+\beta+2n)_{n}({\alpha}_2+\beta+2n)_{n}}{(n-1)!(\beta+1)_{2n-1}(\alpha_1+\beta+2n)_{N+2-2n}({\alpha}_2-{\alpha}_1)_{{n}}}
\scriptstyle\pFq{3}{2}{-n+1,\alpha_1+\beta+2n,\alpha_1-\alpha_2-n+1}{\alpha_1-\alpha_2+1,\alpha_1+\beta+N+2}{1},\\
	A_{2n-1}^{(2)}(0)
	&=\tfrac{-(N+1-2n)!{\alpha}_1+\beta+2n)_{n}({\alpha}_2+\beta+2n)_{n}}{(n-1)!(\beta+1)_{2n-1}(\alpha_2+\beta+2n)_{N+2-2n}({\alpha}_1-{\alpha}_2)_{{n}}}\scriptstyle\pFq{3}{2}{-n+1,\alpha_2+\beta+2n,\alpha_2-\alpha_1-n+1}{\alpha_2-\alpha_1+1,\alpha_2+\beta+N+2}{1},\\
	A_{2n+1}^{(1)}(0)
	&=\tfrac{-(N-1-2n)!({\alpha}_1+\beta+2n+2)_{n+1}({\alpha}_2+\beta+2n+2)_{n+1}}{(n)!(\beta+1)_{2n+1}(\alpha_1+\beta+2n+2)_{N-2n}	({\alpha}_2-{\alpha}_1)_{{n+1}}}
\scriptstyle\pFq{3}{2}{-n,\alpha_1+\beta+2n+2,\alpha_1-\alpha_2-n}{\alpha_1-\alpha_2+1,\alpha_1+\beta+N+2}{1},\\
	A_{2n+1}^{(2)}(0)
	&=\tfrac{-(N-1-2n)!({\alpha}_1+\beta+2n+2)_{n+1}({\alpha}_2+\beta+2n+2)_{n+1}}{(n)!(\beta+1)_{2n+1}(\alpha_2+\beta+2n+2)_{N-2n}({\alpha}_1-{\alpha}_2)_{{n+1}}}\scriptstyle\pFq{3}{2}{-n,\alpha_2+\beta+2n+2,\alpha_2-\alpha_1-n}{\alpha_2-\alpha_1+1,\alpha_2+\beta+N+2}{1},\\
	A_{2n}^{(1)}(0)
	&=\tfrac{(N-2n)!({\alpha}_1+\beta+2n+1)_{n+1}({\alpha}_2+\beta+2n+1)_{n}}{n!(\beta+1)_{2n}(\alpha_1+\beta+2n+1)_{N+1-2n}({\alpha}_2-{\alpha}_1)_{{n}}}
\scriptstyle\pFq{3}{2}{-n,\alpha_1+\beta+2n+1,\alpha_1-\alpha_2-n+1}{\alpha_1-\alpha_2+1,\alpha_1+\beta+N+2}{1},\\
	A_{2n}^{(2)}(0)
	&=\tfrac{(N-2n)!({\alpha}_1+\beta+2n+1)_{n+1}({\alpha}_2+\beta+2n+1)_{n}}{(n-1)!(\beta+1)_{2n}(\alpha_2+\beta+2n+1)_{N+1-2n}({\alpha}_1-{\alpha}_2)_{n+1}}
	\scriptstyle\pFq{3}{2}{-n+1,\alpha_2+\beta+2n+1,\alpha_2-\alpha_1-n}{
		\alpha_2-\alpha_1+1,\alpha_2+\beta+N+2}{1},\\
	A_{2n+2}^{(1)}(0)
	&=\tfrac{(N-2n-2)!({\alpha}_1+\beta+2n+3)_{n+2}({\alpha}_2+\beta+2n+3)_{n+1}}{(n+1)!(\beta+1)_{2n+2}(\alpha_1+\beta+2n+3)_{N-1-2n}({\alpha}_2-{\alpha}_1)_{{n+1}}}
	\scriptstyle\pFq{3}{2}{-n-1,\alpha_1+\beta+2n+3,\alpha_1-\alpha_2-n}{\alpha_1-\alpha_2+1,\alpha_1+\beta+N+2}{1},\\
	A_{2n+2}^{(2)}(0)
	&=\tfrac{(N-2n-2)!({\alpha}_1+\beta+2n+3)_{n+2}({\alpha}_2+\beta+2n+3)_{n+1}}{n!(\beta+1)_{2n+2}(\alpha_2+\beta+2n+3)_{N-1-2n}({\alpha}_1-{\alpha}_2)_{n+2}}
\scriptstyle\pFq{3}{2}{-n,\alpha_2+\beta+2n+3,\alpha_2-\alpha_1-n-1}{
		\alpha_2-\alpha_1+1,\alpha_2+\beta+N+2}{1}.
\end{align*}

For for \(j\in\{0,1,2\}\), the recurrence coefficients are
\[\begin{aligned}
	b_{2m}^j&=\begin{multlined}[t][.8\textwidth]-(\alpha_{1}+m+1)\delta_{j,0}
	+
	\dfrac{(N-2m+1)_{j}(\beta+2m+1-j)_{j}({\alpha}_{1}+\beta+3m+1-j)}{{\prod}_{l=4-j}^{3}({\alpha}_l+\beta+3m-j)}\\ \times
	\dfrac{\prod_{l=1}^2({\alpha}_l+\beta+2m+1-j)_{j}}{\prod_{l=1}^{2}(\alpha_l+\beta+3m+1-j)_{j}}\sum_{i=1}^{3-j}
	\dfrac{(\alpha_i+m)(\alpha_i+\beta+N+m+1)}{(\alpha_i+\beta+3m-j)_{j+2}}
	\dfrac{\prod_{l=1}^2(\alpha_i-\alpha_l+m)}{\prod_{l=1,l\neq i}^{3-j}(\alpha_i-{\alpha}_l)}
\end{multlined}\\
	b_{2m+1}^j&=\begin{multlined}[t][.8\textwidth]-(\alpha_2+m+1)\delta_{j,0}
	+
	\dfrac{(N-2m)_{j}(\beta+2m+2-j)_{j}({\alpha}_2+\beta+3m+2-j)}{{\prod}_{l=5-j}^{4}({\alpha}_l+\beta+3m+{1}-j)}\\ \times
	\dfrac{\prod_{l=1}^2({\alpha}_l+\beta+2m+2-j)_{j}}{\prod_{l=2}^{3}(\alpha_l+\beta+3m+2-j)_{j}}\sum_{i=2}^{4-j}
	\dfrac{(\alpha_i+m)(\alpha_i+\beta+N+m+1)}{(\alpha_i+\beta+3m+1-j)_{j+2}}
	\dfrac{\prod_{l=1}^2(\alpha_i-\alpha_l+m)}{\prod_{l=2,l\neq i}^{4-j}(\alpha_i-{\alpha}_l)}.
\end{multlined}
\end{aligned}\]

The corresponding $\tau$-determinants are given by
\[
\begin{aligned}
	\tau^B_{1,n}&=B_n(0), & n&\in\{0,1,\dots, N-2\},&
		\tau^A_{1,n}&=A^{(1)}_n(0),& n&\in\{0,1,\dots, N-3\},
\end{aligned}
\]
and 	\[	\begin{aligned}
	\tau^A_{2,n}&=\begin{vNiceArray}{cc}[cell-space-limits=2pt]
	A^{(1)}_{n+1}(0) &A^{(1)}_n(0)\\
	A^{(2)}_{n+1}(0) &A^{(2)}_n(0)
\end{vNiceArray}, & n&\in\{0,1,\dots, N-3\}, &	(-1)^{(n+1)}b^2_2 \cdots b^2_{n+1} 	\tau^A_{2,n} &=B_n(0).
\end{aligned}
\]

 Thus, following Theorem \ref{th:bidiagonal_Christoffel}, for $n\in\{0,1,\dots,N-2\}$, we get
 	\[ \begin{aligned}
 		U_{1,n}&=-\frac{B_{n+1}(0)}{B_{n}(0)},   &
 		L_{1,n+1}&=-\frac{A^{(1)}_{n}(0)}{A^{(1)}_{n+1}(0)},
 			\end{aligned}\]
 			and 
 				\[ \begin{aligned}
 				L_{2,n+1}&=-\frac{A^{(1)}_{n+2}(0)}{A^{(1)}_{n+1}(0)}\frac{\begin{vNiceArray}{cc}[cell-space-limits=2pt]
 						A^{(1)}_{n+1}(0) &A^{(1)}_n(0)\\
 						A^{(2)}_{n+1}(0) &A^{(2)}_n(0)
 				\end{vNiceArray}}{\begin{vNiceArray}{cc}[cell-space-limits=2pt]
 						A^{(1)}_{n+2}(0) &A^{(1)}_{n+1}(0)\\
 						A^{(2)}_{n+2}(0) &A^{(2)}_{n+1}(0)
 				\end{vNiceArray}}=b^2_{n+2}\frac{A^{(1)}_{n+2}(0)}{A^{(1)}_{n+1}(0)}\frac{B_{n}(0)}{B_{n+1}(0)},  &n\in\{0,1,\dots,N-3\}.
 			\end{aligned}\]

Therefore,
\[
\begin{aligned}
	U_{1,2n}&=-\frac{-\frac{N!}{(N-2n-1)!}\frac{(\alpha_1+1)_{n+1}(\alpha_2+1)_{n}}{(\alpha_1+\beta+2n+2)_{n+1}(\alpha_2+\beta+2n+2)_{n}}}{\frac{N!}{(N-2n)!}\frac{(\alpha_1+1)_{n}(\alpha_2+1)_{n}}{(\alpha_1+\beta+2n+1)_{n}(\alpha_2+\beta+2n+1)_{n}}}=\tfrac{(N-2n)(\alpha_1+n+1)(\alpha_1+\beta+2n+1)(\alpha_2+\beta+2n+1)}{(\alpha_1+\beta+3n+1)(\alpha_1+\beta+3n+2)(\alpha_2+\beta+3n+1)},\\
		U_{1,2n+1}&=-\frac{\frac{N!}{(N-2n-2)!}\frac{(\alpha_1+1)_{n+1}(\alpha_2+1)_{n+1}}{(\alpha_1+\beta+2n+3)_{n+1}(\alpha_2+\beta+2n+3)_{n+1}}}{-\frac{N!}{(N-2n-1)!}\frac{(\alpha_1+1)_{n+1}(\alpha_2+1)_{n}}{(\alpha_1+\beta+2n+2)_{n+1}(\alpha_2+\beta+2n+2)_{n}} }
		=\tfrac{(N-2n-1)(\alpha_2+n+1)(\alpha_1+\beta+2n+2)(\alpha_2+\beta+2n+2)}{(\alpha_1+\beta+3n+3)(\alpha_2+\beta+3n+3)(\alpha_2+\beta+3n+2)},
\end{aligned}\]
and also, for the lower bidiagonal we find
\[
\begin{aligned}
L_{1,2n}&=-\scriptstyle\frac{\frac{-(N+1-2n)!}{(n-1)!(\beta+1)_{2n-1}(\alpha_1+\beta+2n)_{N+2-2n}}
	\frac{
		({\alpha}_1+\beta+2n)_{n}({\alpha}_2+\beta+2n)_{n}}{
		({\alpha}_2-{\alpha}_1)_{{n}}}
	\pFq{3}{2}{-n+1,\alpha_1+\beta+2n,\alpha_1-\alpha_2-n+1}{\alpha_1-\alpha_2+1,\alpha_1+\beta+N+2}{1}}{\frac{(N-2n)!}{n!(\beta+1)_{2n}(\alpha_1+\beta+2n+1)_{N+1-2n}}
	\frac{({\alpha}_1+\beta+2n+1)_{n+1}({\alpha}_2+\beta+2n+1)_{n}}{
		({\alpha}_2-{\alpha}_1)_{{n}}}
	\pFq{3}{2}{-n,\alpha_1+\beta+2n+1,\alpha_1-\alpha_2-n+1}{\alpha_1-\alpha_2+1,\alpha_1+\beta+N+2}{1}}\\
	&=\scriptstyle\frac{(N+1-2n)n(\beta+2n)(\alpha_2+\beta+2n)}{(\alpha_1+\beta+3n)(\alpha_1+\beta+3n+1)(\alpha_2+\beta+3n)}
		\frac{
		\pFq{3}{2}{-n+1,\alpha_1+\beta+2n,\alpha_1-\alpha_2-n+1}{\alpha_1-\alpha_2+1,\alpha_1+\beta+N+2}{1}}{
		\pFq{3}{2}{-n,\alpha_1+\beta+2n+1,\alpha_1-\alpha_2-n+1}{\alpha_1-\alpha_2+1,\alpha_1+\beta+N+2}{1}}
	,\\
	L_{1,2n+1}&=\scriptstyle-\frac{\frac{(N-2n)!}{n!(\beta+1)_{2n}(\alpha_1+\beta+2n+1)_{N+1-2n}}
		\frac{({\alpha}_1+\beta+2n+1)_{n+1}({\alpha}_2+\beta+2n+1)_{n}}{
			({\alpha}_2-{\alpha}_1)_{{n}}}
		\pFq{3}{2}{-n,\alpha_1+\beta+2n+1,\alpha_1-\alpha_2-n+1}{\alpha_1-\alpha_2+1,\alpha_1+\beta+N+2}{1}}{\frac{-(N-2n-1)!}{n!(\beta+1)_{2n+1}(\alpha_1+\beta+2n+2)_{N-2n}}
		\frac{
			({\alpha}_1+\beta+2n+2)_{n+1}({\alpha}_2+\beta+2n+2)_{n+1}}{
			({\alpha}_2-{\alpha}_1)_{{n+1}}}\pFq{3}{2}{-n,\alpha_1+\beta+2n+2,\alpha_1-\alpha_2-n}{\alpha_1-\alpha_2+1,\alpha_1+\beta+N+2}{1}}\\&=\scriptstyle\frac{(N-2n)(\beta+2n+1)(\alpha_2-\alpha_1+n)(\alpha_2+\beta+2n+1)}{(\alpha_1+\beta+3n+2)(\alpha_2+\beta+3n+1)(\alpha_2+\beta+3n+2)}
	\frac{\pFq{3}{2}{-n,\alpha_1+\beta+2n+1,\alpha_1-\alpha_2-n+1}{\alpha_1-\alpha_2+1,\alpha_1+\beta+N+2}{1}}{\pFq{3}{2}{-n,\alpha_1+\beta+2n+2,\alpha_1-\alpha_2-n}{\alpha_1-\alpha_2+1,\alpha_1+\beta+N+2}{1}}
\end{aligned}
\]

%
%
%
%
Let us continue by  considering
$L_{2,2n}=\frac{b^2_{2n+1}}{L_{1,2n+1}U_{1,2n-1}}$.
Substituting
\begin{multline*}
b_{2n+1}^2=
	\dfrac{(N-2n)_{2}(\beta+2n)_{2}({\alpha}_2+\beta+3n)}{\prod_{l=3}^{4}({\alpha}_l+
	\beta+3n-1)}
	\dfrac{\prod_{l=1}^2({\alpha}_l+\beta+2n)_{2}(\alpha_2-\alpha_l+n)}{\prod_{l=2}^{3}(\alpha_l+\beta+3n)_{2}}\\
\times 	\dfrac{(\alpha_2+n)(\alpha_2+\beta+N+n+1)}{(\alpha_2+\beta+3n-1)_{4}}.
\end{multline*}
where \( \alpha_3 = \alpha_1+1\) and  \( \alpha_4 = \alpha_2+1\) and 
\[
\begin{aligned}
	L_{1,2n+1}&=\scriptstyle\frac{(N-2n)(\beta+2n+1)(\alpha_2-\alpha_1+n)(\alpha_2+\beta+2n+1)}{(\alpha_1+\beta+3n+2)(\alpha_2+\beta+3n+1)(\alpha_2+\beta+3n+2)}
	\frac{\pFq{3}{2}{-n,\alpha_1+\beta+2n+1,\alpha_1-\alpha_2-n+1}{\alpha_1-\alpha_2+1,\alpha_1+\beta+N+2}{1}}{\pFq{3}{2}{-n,\alpha_1+\beta+2n+2,\alpha_1-\alpha_2-n}{\alpha_1-\alpha_2+1,\alpha_1+\beta+N+2}{1}},\\
U_{1,2n-1}&=\scriptstyle\frac{(N-2n+1)(\alpha_2+n)(\alpha_1+\beta+2n)(\alpha_2+\beta+2n)}{(\alpha_1+\beta+3n)(\alpha_2+\beta+3n)(\alpha_2+\beta+3n-1)}
\end{aligned}
\]
we arrive to the identity
\begin{align*}
L_{2,2n}=\scriptstyle\frac{n(\beta+2n)(\alpha_1+\beta+2n+1)(\alpha_2+\beta+N+n+1)}{(\alpha_1+\beta+3n+1)(\alpha_2+\beta+3n)_2}	\frac{\pFq{3}{2}{-n,\alpha_1+\beta+2n+2,\alpha_1-\alpha_2-n}{\alpha_1-\alpha_2+1,\alpha_1+\beta+N+2}{1}}{\pFq{3}{2}{-n,\alpha_1+\beta+2n+1,\alpha_1-\alpha_2-n+1}{\alpha_1-\alpha_2+1,\alpha_1+\beta+N+2}{1}}.
\end{align*}

Let us finally  discuss 
$L_{2,2n+1}=\frac{b^2_{2n+2}}{L_{1,2n+2}U_{1,2n}}$.
With  the substitutions
\[\begin{aligned}
	b_{2n+2}^2&=\begin{multlined}[t][.8\textwidth]
\scriptstyle	\frac{(N-2n-1)_{2}(\beta+2n+1)_{2}({\alpha}_{1}+\beta+3n+2)}{\prod_{l=2}^{3}({\alpha}_l+\beta+3n+1)}
	\frac{\prod_{l=1}^2({\alpha}_l+\beta+2n+1)_{2}}{\prod_{l=1}^{2}(\alpha_l+\beta+3n+2)_{2}}
	\frac{(\alpha_1+n+1)(\alpha_1+\beta+N+n+2)}{(\alpha_1+\beta+3n+1)_{4}}
	\frac{\prod_{l=1}^2(\alpha_1-\alpha_l+n+1)}{1},
\end{multlined}\\
L_{1,2n+2}&=\scriptstyle\frac{(N-1-2n)(n+1)(\beta+2n+2)(\alpha_2+\beta+2n+2)}{(\alpha_1+\beta+3n+3)
	(\alpha_1+\beta+3n+4)(\alpha_2+\beta+3n+3)}
\frac{
	\pFq{3}{2}{-n,\alpha_1+\beta+2n+2,\alpha_1-\alpha_2-n}{\alpha_1-\alpha_2+1,\alpha_1+\beta+N+2}{1}}{
	\pFq{3}{2}{-n-1,\alpha_1+\beta+2n+3,\alpha_1-\alpha_2-n}{\alpha_1-\alpha_2+1,\alpha_1+\beta+N+2}{1}},\\
	U_{1,2n}&=\scriptstyle\frac{(N-2n)(\alpha_1+n+1)(\alpha_1+\beta+2n+1)(\alpha_2+\beta+2n+1)}{(\alpha_1+\beta+3n+1)(\alpha_1+\beta+3n+2)(\alpha_2+\beta+3n+1)},
\end{aligned}\]


We obtain that 
\begin{align*}
L_{2,2n+1}=\scriptstyle\frac{(\beta+2n+1)(\alpha_1+\beta+2n+2)(\alpha_1+\beta+N+n+2)(\alpha_1-\alpha_2+n+1)}{(\alpha_1+\beta+3n+2)_2(\alpha_2+\beta+3n+2)}  \frac{
		\pFq{3}{2}{-n-1,\alpha_1+\beta+2n+3,\alpha_1-\alpha_2-n}{\alpha_1-\alpha_2+1,\alpha_1+\beta+N+2}{1}}{
		\pFq{3}{2}{-n,\alpha_1+\beta+2n+2,\alpha_1-\alpha_2-n}{\alpha_1-\alpha_2+1,\alpha_1+\beta+N+2}{1}}.
\end{align*}
\end{proof}

\begin{lemma}\label{lemma:hypergeom}
	The following hypergeometrical identities hold: 
\begin{align*}
\scriptstyle\frac{(-1)^{n}(2n-2)!(\alpha_2+\beta+n+1)_{2n-1}}{(n-1)! (\alpha_1-\alpha_2-n+1)_{2n-1} }\, \pFq{3}{2}{-n+1,-N,\alpha_2-\alpha_1-n+1}{-2n+2,\alpha_2+\beta+n+1}{1}&=\scriptstyle
	\frac{
		({\alpha}_2+\beta+2n)_{n}}{(\alpha_1+\beta+N+1)
		({\alpha}_2-{\alpha}_1)_{{n}}}\,\pFq{3}{2}{-n+1,\alpha_1+\beta+2n,\alpha_1-\alpha_2-n+1}{\alpha_1-\alpha_2+1,\alpha_1+\beta+N+2}{1},
\\
\scriptstyle	\frac{(-1)^n(2n-1)!(\alpha_2+\beta+n+1)_{2n}}{(n-1)! (\alpha_1-\alpha_2-n+1)_{2n} } \pFq{3}{2}{-n,-N,\alpha_2-\alpha_1-n}{-2n+1,\alpha_2+\beta+n+1}{1}&=\scriptstyle
	\frac{({\alpha}_2+\beta+2n+1)_{n}}{
	(\alpha_1+\beta+N+1)	({\alpha}_2-{\alpha}_1)_{{n}}}\pFq{3}{2}{-n,\alpha_1+\beta+2n+1,\alpha_1-\alpha_2-n+1}{\alpha_1-\alpha_2+1,\alpha_1+\beta+N+2}{1}.
\end{align*}
\end{lemma}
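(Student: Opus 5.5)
The plan is to prove both identities by purely classical manipulations of terminating ${}_3F_2$ series at unit argument. The left-hand sides terminate through the numerator parameter $-n+1$ (respectively $-n$); the right-hand sides also terminate through $-n+1$ (respectively $-n$). Neither side is balanced: for the right-hand side of the first identity the excess (sum of denominator parameters minus sum of numerator parameters) is $N+1$, not $1$. So Pfaff--Saalschütz summation does not apply. Instead I would connect the two sides by a chain of two-term transformations valid for arbitrary terminating ${}_3F_2(1)$.

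Write $m=n-1$ in the first identity ($m=n$ in the second). The first step is to reverse the order of summation on the left-hand side, $k\mapsto m-k$. The presence of the denominator parameter $-2n+2=-2m$ (resp. $-2n+1=-2m+1$) together with the numerator $-m$ is exactly the signature of a reversed series. Reversal uses $(a)_{m-k}=(a)_m(-1)^k/(1-a-m)_k$, and it turns
\[
\pFq{3}{2}{-m,-N,\alpha_2-\alpha_1-n+1}{-2m,\alpha_2+\beta+n+1}{1}
\]
into a multiple of a ${}_3F_2$ with numerator parameters $-m$, $1-\alpha_2-\beta-n-m$, $1+m$ (coming from $-2m$) and denominator parameters $1+N-m$, $\alpha_1-\alpha_2+n-m$. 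The constant produced is $(-N)_m(\alpha_2-\alpha_1-n+1)_m/\big((-2m)_m(\alpha_2+\beta+n+1)_m\big)$. This already accounts for the factors $(2n-2)!/(n-1)!$, $(-1)^n$ and the Pochhammer symbols in $\alpha_2+\beta$ and $\alpha_1-\alpha_2$ on the left.

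The second step is to apply Sheppard's transformation, or Whipple's two-term version,
\[
\pFq{3}{2}{-m,a,b}{d,e}{1}=\frac{(e-b)_m}{(e)_m}\,\pFq{3}{2}{-m,d-a,b}{d,1+b-e-m}{1},
\]
once, or twice with a permutation of the parameters in between. The choice of $a,b,d,e$ is dictated by the target: the parameter set $\{-n+1,\ \alpha_1+\beta+2n,\ \alpha_1-\alpha_2-n+1;\ \alpha_1-\alpha_2+1,\ \alpha_1+\beta+N+2\}$. I would first aim to produce the denominator $\alpha_1+\beta+N+2$ from $1+b-e-m$ with $e=1+N-m$, and the numerator $\alpha_1+\beta+2n$ from $d-a$. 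If one application does not land exactly on the target, I would compose it with the companion form obtained by interchanging the roles of $d$ and $e$. The group of such transformations acts transitively enough on terminating ${}_3F_2(1)$ parameters that a short search suffices.

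After the series match, the remaining task is the bookkeeping of the constants. The $(e-b)_m/(e)_m$ factors and the reversal constant must combine into
\[
\frac{(\alpha_2+\beta+2n)_n}{(\alpha_1+\beta+N+1)(\alpha_2-\alpha_1)_n},
\]
using $(x)_{a+b}=(x)_a(x+a)_b$ and $(-x)_k=(-1)^k(x-k+1)_k$. The second identity is handled identically with $m=n$, $-2n+1$ in place of $-2n+2$, and the parameters shifted by one.

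I expect the main obstacle to be the choice of transformation parameters, together with the sign/Pochhammer bookkeeping. Specifically, the isolated factor $1/(\alpha_1+\beta+N+1)$ must emerge from a ratio like $(\alpha_1+\beta+N+1)_{m}/(\alpha_1+\beta+N+1)_{m+1}$. As safeguards I would verify both identities directly at $n=1$, where the first reduces to $1/(\alpha_1+\beta+N+1)$ times elementary factors on each side, and at $n=2$ symbolically. This pins down the correct composite transformation before writing the general argument.
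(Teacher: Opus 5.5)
\textbf{The statement as printed is false, so your plan fails at the constant-matching step.} Neither identity holds with the factor $1/(\alpha_1+\beta+N+1)$.

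Take $n=1$ in the first identity. Both series equal $1$ (with the usual convention for the pair $0$ upstairs, $0$ downstairs). The left side is then $(\alpha_2+\beta+2)/(\alpha_2-\alpha_1)$, and the right side is the same quantity divided by $\alpha_1+\beta+N+1$. So your safeguard claim, that at $n=1$ ``the first reduces to $1/(\alpha_1+\beta+N+1)$ times elementary factors on each side'', is wrong: only the right side carries that factor.

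The second identity at $n=1$ has no degenerate parameters and also fails. Both sides are multiples of $(\alpha_2+\beta+3)\bigl[\alpha_2+\beta+2+N(\alpha_1-\alpha_2+1)\bigr]/\bigl((\alpha_1-\alpha_2)(\alpha_1-\alpha_2+1)\bigr)$, but they differ by the factor $(\alpha_1+\beta+N+1)(\alpha_1+\beta+N+2)$.

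In general the printed right-hand sides are too small by $(\alpha_1+\beta+N+1)_n$, respectively $(\alpha_1+\beta+N+1)_{n+1}$. The true identities replace $1/(\alpha_1+\beta+N+1)$ by $(\alpha_1+\beta+N+2)_{n-1}$ in the first and by $(\alpha_1+\beta+N+2)_{n}$ in the second. So no bookkeeping will produce the ratio $(\alpha_1+\beta+N+1)_m/(\alpha_1+\beta+N+1)_{m+1}$ you are hoping for.

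The corrected constants are also what the paper's own argument yields. The paper simply equates the two quoted closed forms of $A^{(1)}_{2n-1}(0)$ and $A^{(1)}_{2n}(0)$. After splitting $(\alpha_1+\beta+2n)_{N+2-2n}=(\alpha_1+\beta+2n)_n(\alpha_1+\beta+3n)_{N-3n+2}$, what remains is $(\alpha_1+\beta+3n)_{N-2n+1}/(\alpha_1+\beta+3n)_{N-3n+2}=(\alpha_1+\beta+N+2)_{n-1}$. The analogous quotient in the second case is $(\alpha_1+\beta+N+2)_n$.

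\textbf{With the corrected constants, your route works and differs genuinely from the paper's.} The paper's proof depends on uniqueness of the type I polynomials and on two external representations; yours is a self-contained hypergeometric proof. One Sheppard transformation plus one reversal suffices, provided you use the form that keeps one numerator parameter fixed:
\[
{}_3F_2(-m,a,b;d,e;1)=\frac{(d-a)_m(e-a)_m}{(d)_m(e)_m}\,{}_3F_2(-m,a,a+b-m-d-e+1;a-m-d+1,a-m-e+1;1).
\]
Take $a=\alpha_2-\alpha_1-m$, $b=-N$, $d=-2m$ (resp.\ $-2m+1$) and $e=\alpha_2+\beta+n+1$, with $m=n-1$ (resp.\ $m=n$). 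Then reverse the summation; this lands exactly on the right-hand series.

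The factor $(\alpha_1+\beta+N+2)_m$ comes from $(-\alpha_1-\beta-N-m-1)_m=(-1)^m(\alpha_1+\beta+N+2)_m$. The remaining Pochhammer symbols collapse to $(\alpha_2+\beta+2n)_n/(\alpha_2-\alpha_1)_n$, resp.\ $(\alpha_2+\beta+2n+1)_n/(\alpha_2-\alpha_1)_n$.

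Your proposed one-step targeting cannot work: producing $\alpha_1+\beta+N+2$ as $1+b-e-m$ with $e=1+N-m$ would require $b=\alpha_1+\beta+2N+2$. There are also two minor slips in your reversal step. The new numerator parameter is $-\alpha_2-\beta-n-m$, not $1-\alpha_2-\beta-n-m$. The reversal constant carries an extra factor $(-1)^m$.
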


\begin{proof}
	Using our previous results in \cite{JMAA} we obtain the alternative expressions for the type I Hahn  multiple orthogonal polynomials at $x=0$ 
	\[\begin{aligned}
		A^{(1)}_{2n-1}(0)&=\scriptstyle
			\frac{(-1)^{n-1}(N-2n+1)!(2n-2)!(\alpha_2+\beta+n+1)_{2n-1}}{(n-1)!(n-1)!(\beta+1)_{2n-1} (\alpha_1+\beta+3n)_{N-2n+1}(\alpha_1-\alpha_2-n+1)_{2n-1} }  \pFq{3}{2}{-n+1,-N,\alpha_2-\alpha_1-n+1}{-2n+2,\alpha_2+\beta+n+1}{1},\\
		A^{(1)}_{2n}(0)&=\scriptstyle
			\frac{(-1)^n(N-2n)!(2n-1)!(\alpha_2+\beta+n+1)_{2n}}{n!(n-1)!(\beta+1)_{2n} (\alpha_1+\beta+3n+2)_{N-2n}(\alpha_1-\alpha_2-n+1)_{2n} } \pFq{3}{2}{-n,-N,\alpha_2-\alpha_1-n}{-2n+1,\alpha_2+\beta+n+1}{1}
	\end{aligned}\]
	Comparing these expressions with the one used previously, deduced from \cite{BDFMW}, we get the result.
\end{proof}

In our \cite{LAA}, with the notation 
\[
\begin{aligned}
	a_{6n+1}&=U_{1,2n}, &a_{6n+4}&=U_{1,2n+1},&a_{6n+2}&=L_{1,2n},  &a_{6n+5}&=L_{1,2n+1}, &a_{6n+3}&=L_{2,2n}, &a_{6n+6}&=L_{1,2n+1}, 
\end{aligned}
\]
we proved that:
\begin{teo}\label{th:Hahn_before}
	The  bidiagonal factorization of the recursion matrix for the Hahn multiple orthogonal polynomials with respect two weights is:
	\begin{align*}
		\begin{aligned}
			a_{6n+1}
			&=\scriptstyle
			\frac{(N-2n)(\alpha_1+1+n)(\alpha_1+\beta+2n+1)(\alpha_2+\beta+2n+1)}{(\alpha_1+\beta+3n+1)_2(\alpha_2+\beta+3n+1)}\\
			a_{6n+4}&=\scriptstyle\frac{(N-2n-1)(\alpha_2+1+n)(\alpha_1+\beta+2n+2)(\alpha_2+\beta+2n+2)}{(\alpha_1+\beta+3n+3)(\alpha_2+\beta+3n+2)_2},\\
			a_{6n+2}&=\scriptstyle\frac{(N-2n)(n)_n(\beta+2n+1)(\alpha_2-\alpha_1+n)(\alpha_2+\beta+n+1)}{(n+1)_n(\alpha_1+\beta+3n+2)(\alpha_2+\beta+3n+1)_2}
				\frac{\pFq{3}{2}{-n,-N,\alpha_2-\alpha_1-n}{-2n+1,\alpha_2+\beta+n+1}{1}}{\pFq{3}{2}{-n,-N,\alpha_2-\alpha_1-n}{-2n,\alpha_2+\beta+n+2}{1}},\\
			a_{6n+5}&=\scriptstyle\frac{(n+1)(N-2n-1)(\beta+2n+2)(\alpha_1-\alpha_2+n+1)(\alpha_1+\beta+2+n+N)}{(2n+1)(\alpha_1+\beta+3n+3)_2(\alpha_2+\beta+3n+3)}				\frac{\pFq{3}{2}{-n,-N,\alpha_2-\alpha_1-n}{-2n,\alpha_2+\beta+n+2}{1}}{\pFq{3}{2}{-n-1,-N,\alpha_2-\alpha_1-n-1}{-2n-1,\alpha_2+\beta+n+2}{1}},\\
			a_{6n+3}&=\scriptstyle\frac{(2n+1)(\beta+2n+1)(\alpha_1+\beta+2n+2)(\alpha_2+\beta+2n+2)}{(\alpha_1+\beta+3n+2)_2(\alpha_2+\beta+3n+2)}
				\frac{\pFq{3}{2}{-n-1,-N,\alpha_2-\alpha_1-n-1}{-2n-1,\alpha_2+\beta+n+2}{1}}{\pFq{3}{2}{-n,-N,\alpha_2-\alpha_1-n}{-2n,\alpha_2+\beta+n+2}{1}},\\
			a_{6n+6}&=\scriptstyle\frac{2(n+1)(\beta+2n+2)(\alpha_1+\beta+2n+3)(\alpha_2+\beta+2n+3)(\alpha_2+\beta+2+n+N)}{(\alpha_1+\beta+3n+4)(\alpha_2+\beta+3n+3)_2(\alpha_2+\beta+n+2)}
				\frac{\pFq{3}{2}{-n-1,-N,\alpha_2-\alpha_1-n-1}{-2n-2,\alpha_2+\beta+n+3}{1}}{\pFq{3}{2}{-n-1,-N,\alpha_2-\alpha_1-n-1}{-2n-1,\alpha_2+\beta+n+2}{1}}.
		\end{aligned}
	\end{align*}
\end{teo}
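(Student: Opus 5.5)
The plan is to derive Theorem~\ref{th:Hahn_before} from Theorem~\ref{th:Hahn_now}, not to redo the contiguous-relation computation of \cite{LAA}. The two theorems describe the same factorization $\mathscr T_{N-2}=L_1L_2U_1$. Since a factorization with this normalization is unique, it is enough to show that the entries listed in Theorem~\ref{th:Hahn_before} coincide with those of Theorem~\ref{th:Hahn_now} under a suitable matching of indices.

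The upper factor needs no work. The expressions for $a_{6n+1}$ and $a_{6n+4}$ are literally $U_{1,2n}$ and $U_{1,2n+1}$ of Theorem~\ref{th:Hahn_now}.

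For the lower factors, I will first fix the index dictionary by comparing the rational prefactors. For instance, $a_{6n+2}$ carries $(N-2n)(\beta+2n+1)(\alpha_2-\alpha_1+n)/\big((\alpha_1+\beta+3n+2)(\alpha_2+\beta+3n+1)_2\big)$, which is exactly the prefactor of $L_{1,2n+1}$. So I expect the correspondence to be shifted by one relative to the notation displayed before the theorem, namely $a_{6n+2}\leftrightarrow L_{1,2n+1}$, $a_{6n+3}\leftrightarrow L_{2,2n+1}$, $a_{6n+5}\leftrightarrow L_{1,2n+2}$ and $a_{6n+6}\leftrightarrow L_{2,2n+2}$. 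I will confirm this dictionary by checking the $n=0$ entries directly against the first rows of $\mathscr T_{N-2}$.

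The core step converts each ${}_3F_2$ in Theorem~\ref{th:Hahn_now} into one of the form ${}_3F_2(-m,-N,\alpha_2-\alpha_1-m';-k,\alpha_2+\beta+j;1)$ using Lemma~\ref{lemma:hypergeom}.
\begin{itemize}
\item The second identity of the lemma, as stated, handles $F(-n,\alpha_1+\beta+2n+1,\alpha_1-\alpha_2-n+1)$.
\item Applying the first identity with $n\to n+1$ handles $F(-n,\alpha_1+\beta+2n+2,\alpha_1-\alpha_2-n)$, which becomes $F(-n,-N,\alpha_2-\alpha_1-n;-2n,\alpha_2+\beta+n+2)$.
\item Applying the second identity with $n\to n+1$ handles $F(-n-1,\alpha_1+\beta+2n+3,\alpha_1-\alpha_2-n)$, which becomes $F(-n-1,-N,\alpha_2-\alpha_1-n-1;-2n-1,\alpha_2+\beta+n+2)$.
\end{itemize}
If a needed function lies outside these cases (for example the one appearing in $a_{6n+6}$, with $-2n-2$ and $\alpha_2+\beta+n+3$), I will read it off from the same comparison of the two expressions for $A^{(1)}_{2n+1}(0)$ and $A^{(1)}_{2n+2}(0)$ used to prove the lemma. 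Equivalently, I will derive $L_2$ afresh from $L_{2,m}=b^2_{m+1}/(L_{1,m+1}U_{1,m-1})$ once $L_1$ is in the new form. Substituting these conversions, each quotient of ${}_3F_2$'s in Theorem~\ref{th:Hahn_now} becomes the quotient appearing in Theorem~\ref{th:Hahn_before}, times an explicit ratio of Pochhammer symbols and factorials.

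The main obstacle is this last bookkeeping: simplifying the Pochhammer prefactors so that they collapse to the stated ones. Examples are the factor $(n)_n/(n+1)_n$ in $a_{6n+2}$ and the factors $(n+1)/(2n+1)$ and $(\alpha_1+\beta+N+n+2)$ in $a_{6n+5}$. I would do this by writing every ratio such as $(\alpha_2+\beta+n+2)_{2n+1}/(\alpha_2+\beta+n+1)_{2n}$ and $(\alpha_1-\alpha_2-n)_{2n+1}/(\alpha_1-\alpha_2-n+1)_{2n}$ as a product of at most two linear factors. I would then cancel these against the prefactors of Theorem~\ref{th:Hahn_now}, and finally check the result by evaluating both sides at $n=0$.
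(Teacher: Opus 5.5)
Your route differs from the paper's treatment of this theorem, which is not proved here at all. It is quoted from \cite{LAA}, where it was obtained from contiguous relations. The paper then only asserts, in the next theorem, that it agrees with Theorem~\ref{th:Hahn_now} ``using Lemma~\ref{lemma:hypergeom}''. Your plan runs that agreement argument as a proof, which is legitimate and much shorter. Your shifted dictionary $a_{3k+1}=U_{1,k}$, $a_{3k+2}=L_{1,k+1}$, $a_{3k+3}=L_{2,k+1}$ is the correct one; the list printed before the theorem is off by one. The ${}_3F_2$ with $-2n-2$ and $\alpha_2+\beta+n+3$ in $a_{6n+6}$ needs no fallback: it is the first identity of the lemma with $n\to n+2$, and it matches the numerator of $L_{2,2n+2}$.

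The step that fails as written is your black-box use of Lemma~\ref{lemma:hypergeom}, because it is misnormalized as printed. At $n=1$ both ${}_3F_2$'s of the first identity equal $1$, and the identity reduces to $\frac{\alpha_2+\beta+2}{\alpha_2-\alpha_1}=\frac{\alpha_2+\beta+2}{(\alpha_1+\beta+N+1)(\alpha_2-\alpha_1)}$, which is false. Equating the two printed representations of $A^{(1)}_{2n-1}(0)$ and $A^{(1)}_{2n}(0)$ carefully shows what the factor $1/(\alpha_1+\beta+N+1)$ should be: $(\alpha_1+\beta+N+2)_{n-1}$ in the first identity and $(\alpha_1+\beta+N+2)_{n}$ in the second. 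Write $\Phi_i(m)$ and $\Psi_i(m)$ for the right- and left-hand ${}_3F_2$'s of the $i$-th identity at $n=m$. The corrected lemma gives
\[
\frac{\Phi_2(m)}{\Phi_1(m+1)}=\frac{(m)_m}{(m+1)_m}\,\frac{\alpha_2+\beta+m+1}{\alpha_2+\beta+2m+1}\,\frac{\Psi_2(m)}{\Psi_1(m+1)},\qquad
\frac{\Phi_1(m)}{\Phi_2(m)}=\frac{(\alpha_1-\alpha_2+m)(\alpha_1+\beta+N+m+1)}{(2m-1)(\alpha_2+\beta+2m)}\,\frac{\Psi_1(m)}{\Psi_2(m)}.
\]
In the first relation, read $(2m-1)!/(m-1)!$ as $(m)_m$; this is where $(n)_n/(n+1)_n$ in $a_{6n+2}$ comes from.
\begin{itemize}
\item The first relation turns $L_{1,2n+1}$ into $a_{6n+2}$ (take $m=n$), and its reciprocal turns $L_{2,2n+2}$ into $a_{6n+6}$ (take $m=n+1$).
\item The second relation, with $m=n+1$, turns $L_{1,2n+2}$ into $a_{6n+5}$ and $L_{2,2n+1}$ into $a_{6n+3}$.
\end{itemize}
With these, all four entries match exactly. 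The factor $\alpha_1+\beta+N+n+2$ you flagged in $a_{6n+5}$, and its cancellation against the prefactor of $L_{2,2n+1}$, come only from the corrected normalization. With the lemma as printed, the $N$-dependent factor cancels in every ratio. Then $a_{6n+3}$ and $a_{6n+5}$ come out off by $(\alpha_1+\beta+N+n+2)^{\pm1}$, and your bookkeeping cannot close. Your $n=0$ check would expose this; the fix is to rederive the lemma as above.

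Also drop the appeal to uniqueness, because it is false. On an $M\times M$ truncation, $L_1L_2U_1$ has $3M-2$ free entries while $\mathscr T$ has only $3M-3$; for instance, the $(1,0)$ entry fixes only $L_{1,1}+L_{2,1}$. So such factorizations form a one-parameter family. You do not need uniqueness, since entrywise equality with Theorem~\ref{th:Hahn_now} proves the statement directly. But this non-uniqueness means the comparison of \cite{LAA}'s entries with the Christoffel ones carries the proof; it is not a mere consistency check.
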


\begin{teo}
The bidiagonal factorization in Theorems \ref{th:Hahn_now} and \ref{th:Hahn_before} coincide.
\end{teo}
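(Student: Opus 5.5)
The plan is to compare the two lists entry by entry, using the dictionary $a_{6n+1}=U_{1,2n}$, $a_{6n+4}=U_{1,2n+1}$, $a_{6n+2}=L_{1,2n}$, $a_{6n+5}=L_{1,2n+1}$, $a_{6n+3}=L_{2,2n}$, $a_{6n+6}=L_{2,2n+1}$. The last item should presumably read $L_{2,2n+1}$ rather than $L_{1,2n+1}$, and the index offsets must be matched against the actual positions in the factors, shifting $n$ by one if the lower entries in \cite{LAA} are labelled from the first subdiagonal row. The two factorizations are of the same matrix $\mathscr T_{N-2}$ with the same normalization: $L_1,L_2$ unit lower bidiagonal and $U_1$ upper bidiagonal with unit superdiagonal. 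So one could argue abstractly via uniqueness. When the leading principal minors are nonzero (the AT-system hypothesis $\alpha_1-\alpha_2\notin\mathbb Z$ gives the nonvanishing $\tau$-determinants), the entries are determined recursively: $U_{1,0}$ is forced by the $(0,0)$ entry, and the row-by-row comparison of $\mathscr T=L_1L_2U_1$ then determines each new diagonal and subdiagonal entry in turn. I would still present the direct comparison, since it is the content actually checked.

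The $U_1$ entries are immediate. The formulas for $a_{6n+1}$ and $a_{6n+4}$ coincide verbatim with $U_{1,2n}$ and $U_{1,2n+1}$ in Theorem~\ref{th:Hahn_now}, after writing $(\alpha_2+\beta+3n+2)_2=(\alpha_2+\beta+3n+2)(\alpha_2+\beta+3n+3)$.

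For the lower factors I would use Lemma~\ref{lemma:hypergeom}, which converts each ${}_3F_2$ of the form appearing in Theorem~\ref{th:Hahn_now} (parameters $\alpha_1-\alpha_2+1,\alpha_1+\beta+N+2$ below) into the form used in \cite{LAA} (parameters $-2n+\cdot,\alpha_2+\beta+n+\cdot$ below). The lemma covers the two families $A^{(1)}_{2n-1}(0)$ and $A^{(1)}_{2n}(0)$. Substituting into $L_{1,2n}$ and $L_{1,2n+1}$ turns the hypergeometric quotients into quotients of ${}_3F_2(-n,-N,\dots;-2n+1,\dots)$ type functions. What remains is a rational function of $n,N,\alpha_i,\beta$ built from Pochhammer ratios such as $(2n-1)!/(2n-2)!$, $(\alpha_2+\beta+n+1)_{2n}/(\alpha_2+\beta+n+1)_{2n-1}$, and $(\alpha_1-\alpha_2-n+1)_{2n}/(\alpha_1-\alpha_2-n+1)_{2n-1}$. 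This should simplify to the prefactor of the corresponding $a$-entry, with the shift $n\mapsto n+1$ in the lemma used where needed.

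For $L_2$ I would use the relations $L_{2,2n}=b^2_{2n+1}/(L_{1,2n+1}U_{1,2n-1})$ and $L_{2,2n+1}=b^2_{2n+2}/(L_{1,2n+2}U_{1,2n})$ from the proof of Theorem~\ref{th:Hahn_now}. These hold equally for the \cite{LAA} entries because both are factorizations of the same $\mathscr T$: the second subdiagonal of $L_1L_2U_1$ is $L_{1}L_{2}U_1$ evaluated on the corner entries. Once $U_1$ and $L_1$ agree, the $L_2$ entries therefore agree automatically, and no further hypergeometric identity is required.

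The main obstacle is the bookkeeping in the $L_1$ step. The prefactors involve $(n)_n/(n+1)_n$, $(n+1)/(2n+1)$ and factors like $(\alpha_1+\beta+N+n+2)$ that must cancel against the $(\alpha_1+\beta+N+1)$ and $(\alpha_1+\beta+3n)_{N-2n+1}$ factors hidden in the lemma. I must also check whether the lemma as stated suffices or whether an additional contiguous relation from \cite{JMAA} is needed to match ${}_3F_2(\dots;-2n,\alpha_2+\beta+n+2)$. If it is, I would first try deriving it from the alternative representations of $A^{(1)}_{2n+1}(0)$ in \cite{JMAA}, proved exactly as in Lemma~\ref{lemma:hypergeom}.
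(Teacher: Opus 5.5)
Your main line is what the paper's one-line proof does: compare entrywise, read $U_1$ off verbatim, and convert the ${}_3F_2$'s in $L_1$ with Lemma~\ref{lemma:hypergeom}. You were also right that the dictionary is off by one in the lower factors. Matching prefactors forces $a_{6n+2}=L_{1,2n+1}$, $a_{6n+5}=L_{1,2n+2}$, $a_{6n+3}=L_{2,2n+1}$ and $a_{6n+6}=L_{2,2n+2}$. Getting $L_2$ for free from $U_1$, $L_1$ and the fact that both lists factor the same $\mathscr T_{N-2}$ is a legitimate shortcut. The diagonal relation $b^0_k=U_{1,k}+L_{1,k}+L_{2,k}$ does this without any division.

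However, your abstract uniqueness argument is false. LU uniqueness fixes $U_1$ and $L_1L_2$. But $L_1L_2$ only records $L_{1,k}+L_{2,k}$ and $L_{1,k+1}L_{2,k}$. Already the $(1,0)$ and $(1,1)$ entries give only $(L_{1,1}+L_{2,1})U_{1,0}$ and $L_{1,1}+L_{2,1}+U_{1,1}$. For an $M\times M$ matrix there are $3M-2$ unknowns against $3M-3$ entries, and $L_{1,1}$ is free. The freedom is genuine: replacing the weights $(w_1,w_2)$ by $(w_1,w_2+cw_1)$ leaves $\mathscr L_N$, hence $\mathscr T_N$, unchanged. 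It does, however, replace $A^{(1)}$ by $A^{(1)}-cA^{(2)}$ in Theorem~\ref{th:bidiagonal_Christoffel}, which yields a different $L_1$.

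So the remark cannot replace the $L_1$ check. Corrected, though, it makes that check almost trivial. From $L_{2,k}=(L_1L_2)_{k,k-1}-L_{1,k}$ and $L_{1,k+1}=(L_1L_2)_{k+1,k-1}/L_{2,k}$, the factorization is determined by $L_{1,1}$ for generic parameters, and then everywhere by rationality in the parameters. Both theorems give $L_{1,1}=a_2=\frac{N(\beta+1)(\alpha_2-\alpha_1)}{(\alpha_1+\beta+2)(\alpha_2+\beta+2)}$.

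Second, your $L_1$ bookkeeping will not close with Lemma~\ref{lemma:hypergeom} as printed. Its factor $(\alpha_1+\beta+N+1)^{-1}$ is identical in both identities, so it drops out of every quotient. As a result $L_{1,2n+2}$ and $a_{6n+5}$ come out differing by exactly $\alpha_1+\beta+N+n+2$, which is the factor you hoped would cancel. In fact the second identity already fails at $n=1$: there the left side equals the printed right side times $(\alpha_1+\beta+N+1)(\alpha_1+\beta+N+2)$.

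Instead, equate the two displayed representations of $A^{(1)}_{2n-1}(0)$ and of $A^{(1)}_{2n}(0)$. This gives $(\alpha_1+\beta+N+2)_{n-1}$ and $(\alpha_1+\beta+N+2)_{n}$, respectively, in place of $(\alpha_1+\beta+N+1)^{-1}$. Their quotient supplies the missing factor for $L_{1,2n+2}$. Even simpler, compute $-A^{(1)}_{k-1}(0)/A^{(1)}_{k}(0)$ directly from the \cite{JMAA} forms; this reproduces $a_{6n+2}$ and $a_{6n+5}$ at once. Your worry about a missing ${}_3F_2(\dots;-2n,\alpha_2+\beta+n+2)$ is unfounded: it is the first identity with $n\mapsto n+1$.
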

\begin{proof}
	Use Lemma \ref{lemma:hypergeom}.
\end{proof}

\subsection{Three weights}
Let us now assume that $p=3$.  Therefore our matrices have one superdiagonal and two subdiagonals, and we are looking for a factorization of the form $\mathscr T_{N-3}=L_1L_2L_3U_1$. 

We first note that
\[\begin{aligned}
	B_{3m}&= B_{(m,m,m)}, & 	B_{3m+1}&= B_{(m+1,m,m)}, & 	B_{3m+2}&= B_{(m+1,m+1,m)},\\
	A^{(1)}_{3m-1}&= A^{(1)}_{(m,m,m)},&A^{(1)}_{3m}&= A^{(1)}_{(m+1,m,m)},&A^{(1)}_{3m+1}&= A^{(1)}_{(m+1,m+1,m)},\\
	A^{(2)}_{3m-1}&= A^{(2)}_{(m,m,m)},&A^{(2)}_{3m}&= A^{(2)}_{(m+1,m,m)},&A^{(2)}_{3m+1}&= A^{(2)}_{(m+1,m+1,m)},\\
		A^{(3)}_{3m-1}&= A^{(3)}_{(m,m,m)},&A^{(3)}_{3m}&= A^{(3)}_{(m+1,m,m)},&A^{(3)}_{3m+1}&= A^{(3)}_{(m+1,m+1,m)},\\
\end{aligned}\]

\begin{teo}\label{th:Hahn_3}
	The  Hahn multiple orthogonal polynomials with two weights we have the 
	factorization
	\[
	\mathscr T_{N-3}=L_1L_2 L_3U_1
	\] 
	in terms of three lower bidiagonal matrices $L_1,L_2$ and $L_3$ and an upper unitriangular matrix $U_1$. The corresponding entries of these matrices are
\[\hspace{-1cm}
	\begin{aligned}
			U_{1,3n}&=\tfrac{(N-3n)(\alpha_{1}+n+1)(\alpha_{1}+\beta+3n+1)(\alpha_{2}+\beta+3n+1)(\alpha_{3}+\beta+3n+1)}
			{(\alpha_{1}+\beta+4n+1)_2(\alpha_{2}+\beta+4n+1)(\alpha_{3}+\beta+4n+1)},\\
			U_{1,3n+1}&=-\tfrac{(N-3n-1)(\alpha_{2}+n+1)\,(\alpha_{1}+\beta+3n+2)(\alpha_{2}+\beta+3n+2)(\alpha_{3}+\beta+3n+2)}
			{(\alpha_{1}+\beta+4n+3)(\alpha_{2}+\beta+4n+2)_2(\alpha_{3}+\beta+4n+2)},\\
			U_{1,3n+2}&=\tfrac{(N-3n-2)(\alpha_{3}+n+1)\,
				(\alpha_{1}+\beta+3n+3)(\alpha_{2}+\beta+3n+3)(\alpha_{3}+\beta+3n+3)}
			{(\alpha_{1}+\beta+4n+4)(\alpha_{2}+\beta+4n+4)
				(\alpha_{3}+\beta+4n+3)_2},\\
					L_{1,3n+1}&=\tfrac{(N-3n)\,(\beta+3n+1)\,(\alpha_2-\alpha_1+n)(\alpha_2+\beta+3n+1)\,(\alpha_3+\beta+3n+1)}
					{(\alpha_1+\beta+4n+2)\,(\alpha_2+\beta+4n+1)_2(\alpha_3+\beta+4n+1)}
					\tfrac{\pFq{4}{3}{-n,\alpha_1+\beta+3n+1,\alpha_1-\alpha_2-n+1,\alpha_1-\alpha_3-n+1}{
							\alpha_1-\alpha_2+1,		\alpha_1-\alpha_3+1,\alpha_1+\beta+N+2}{1}}{\pFq{4}{3}{-n,\alpha_1+\beta+3n+2,\alpha_1-\alpha_2-n,\alpha_1-\alpha_3-n+1}{
							\alpha_1-\alpha_2+1,		\alpha_1-\alpha_3+1,\alpha_1+\beta+N+2}{1}},\\
					L_{1,3n+2}&=\tfrac{(N-3n-1)\,(\beta+3n+2)\,(\alpha_3-\alpha_1+n)
						(\alpha_2+\beta+3n+2)(\alpha_3+\beta+3n+2)}
					{(\alpha_1+\beta+4n+3)(\alpha_2+\beta+4n+3)(\alpha_3+\beta+4n+2)_2}
					\tfrac{\pFq{4}{3}{-n,\alpha_1+\beta+3n+2,\alpha_1-\alpha_2-n,\alpha_1-\alpha_3-n+1}{
							\alpha_1-\alpha_2+1,		\alpha_1-\alpha_3+1,\alpha_1+\beta+N+2}{1}}{\pFq{4}{3}{-n,\alpha_1+\beta+3n+3,\alpha_1-\alpha_2-n,\alpha_1-\alpha_3-n}{
							\alpha_1-\alpha_2+1,		\alpha_1-\alpha_3+1,\alpha_1+\beta+N+2}{1}},\\
					L_{1,3n+3}&=\tfrac{(N-3n-2)(n+1)(\beta+3n+3)(\alpha_2+\beta+3n+3)(\alpha_3+\beta+3n+3)}
					{(\alpha_1+\beta+4n+4)_2(\alpha_2+\beta+4n+4)(\alpha_3+\beta+4n+4)}
					\tfrac{\pFq{4}{3}{-n,\alpha_1+\beta+3n+3,\alpha_1-\alpha_2-n,\alpha_1-\alpha_3-n}{
							\alpha_1-\alpha_2+1,		\alpha_1-\alpha_3+1,\alpha_1+\beta+N+2}{1}}{\pFq{4}{3}{-n-1,\alpha_1+\beta+3n+4,\alpha_1-\alpha_2-n,\alpha_1-\alpha_3-n}{
							\alpha_1-\alpha_2+1,		\alpha_1-\alpha_3+1,\alpha_1+\beta+N+2}{1}},\\
		L_{2, 3n+1}&=\begin{multlined}[t][\textwidth]
			-\tfrac{n(N-3n)_2(\beta+3n)_2(\alpha_1+\beta+3n)(\alpha_1+\beta+4n+3)(\alpha_2+\beta+3n)(\alpha_2+\beta+4n+3)(\alpha_3+\beta+3n)_2(\alpha_3+\beta+4n+2)_2 (\alpha_1-\alpha_2+n)}%
			{(N-3n-1)(\beta+3n+2)(\alpha_1+\beta+4n)_3(\alpha_2+\beta+3n+2)
				(\alpha_2+\beta+4n)_3(\alpha_3+\beta+3n+2)(\alpha_3+\beta+4n)_2(\alpha_3-\alpha_1+n)}\\
			\times 
			\tfrac{\pFq{4}{3}{-n,\alpha_1+\beta+3n+3,\alpha_1-\alpha_2-n,\alpha_1-\alpha_3-n}{
					\alpha_1-\alpha_2+1,		\alpha_1-\alpha_3+1,\alpha_1+\beta+N+2}{1}}{\pFq{4}{3}{-n,\alpha_1+\beta+3n+2,\alpha_1-\alpha_2-n,\alpha_1-\alpha_3-n+1}{
					\alpha_1-\alpha_2+1,		\alpha_1-\alpha_3+1,\alpha_1+\beta+N+2}{1}} \tfrac{\begin{vNiceArray}{cc}[cell-space-limits=2pt]
					\frac{ \pFq{4}{3}{-n,\alpha_1+\beta+3n+1,\alpha_1-\alpha_2-n+1,\alpha_1-\alpha_3-n+1}{
							\alpha_1-\alpha_2+1,		\alpha_1-\alpha_3+1,\alpha_1+\beta+N+2}{1}}{n}&\frac{\pFq{4}{3}{-n+1,\alpha_1+\beta+3n,\alpha_1-\alpha_2-n+1,\alpha_1-\alpha_3-n+1}{
							\alpha_1-\alpha_2+1,		\alpha_1-\alpha_3+1,\alpha_1+\beta+N+2}{1}}{\alpha_1+\beta+3n}\\
					\frac{\pFq{4}{3}{-n+1,\alpha_2+\beta+3n+1,\alpha_2-\alpha_1-n,\alpha_2-\alpha_3-n+1}{
							\alpha_2-\alpha_1+1,		\alpha_2-\alpha_3+1,\alpha_2+\beta+N+2}{1}}{\alpha_1-\alpha_2+n}&\frac{\pFq{4}{3}{-n+1,\alpha_2+\beta+3n,\alpha_2-\alpha_1-n+1,\alpha_2-\alpha_3-n+1}{
							\alpha_2-\alpha_1+1,		\alpha_2-\alpha_3+1,\alpha_2+\beta+N+2}{1}}{\alpha_2+\beta+3n}
				\end{vNiceArray}
			}{\begin{vNiceArray}{cc}[cell-space-limits=2pt,small]
					\frac{\pFq{4}{3}{-n,\alpha_1+\beta+3n+2,\alpha_1-\alpha_2-n,\alpha_1-\alpha_3-n+1}{
							\alpha_1-\alpha_2+1,		\alpha_1-\alpha_3+1,\alpha_1+\beta+N+2}{1}}{ \alpha_2-\alpha_1+n}&\frac{\pFq{4}{3}{-n,\alpha_1+\beta+3n+1,\alpha_1-\alpha_2-n+1,\alpha_1-\alpha_3-n+1}{
							\alpha_1-\alpha_2+1,		\alpha_1-\alpha_3+1,\alpha_1+\beta+N+2}{1}}{n(\alpha_1+\beta+3n+1)	}\\
					\pFq{4}{3}{-n,\alpha_2+\beta+3n+2,\alpha_2-\alpha_1-n,\alpha_2-\alpha_3-n+1}{
						\alpha_2-\alpha_1+1,		\alpha_2-\alpha_3+1,\alpha_2+\beta+N+2}{1}&\frac{\pFq{4}{3}{-n+1,\alpha_2+\beta+3n+1,\alpha_2-\alpha_1-n,\alpha_2-\alpha_3-n+1}{
							\alpha_2-\alpha_1+1,		\alpha_2-\alpha_3+1,\alpha_2+\beta+N+2}{1}}{\alpha_2+\beta+3n+1}
			\end{vNiceArray}},		\end{multlined}\\
		L_{2, 3n+2}&=\begin{multlined}[t][\textwidth]-
			\tfrac{n (N-3n-1)_2(\beta+3n+1)_2(\alpha_1+\beta+3n+1)(\alpha_1+\beta+4n+4)_2(\alpha_2+\beta+3n+1)(\alpha_2+\beta+4n+4)(\alpha_3+\beta+3n+1)_2(\alpha_3+\beta+4n+4)  (\alpha_2-\alpha_1+n)}%
			{(n+1)(N-3n-2)(\beta+3n+3)(\alpha_1+\beta+4n+2)_2(\alpha_2+\beta+3n+3)(\alpha_2+\beta+4n+1)_3(\alpha_3+\beta+3n+3)(\alpha_3+\beta+4n+1)_3}\\
			\times\tfrac{\pFq{4}{3}{-n-1,\alpha_1+\beta+3n+4,\alpha_1-\alpha_2-n,\alpha_1-\alpha_3-n}{
					\alpha_1-\alpha_2+1,		\alpha_1-\alpha_3+1,\alpha_1+\beta+N+2}{1}}{\pFq{4}{3}{-n,\alpha_1+\beta+3n+,\alpha_1-\alpha_2-n,\alpha_1-\alpha_3-n}{
					\alpha_1-\alpha_2+1,		\alpha_1-\alpha_3+1,\alpha_1+\beta+N+2}{1}}
			\tfrac{\begin{vNiceArray}{cc}[cell-space-limits=2pt,small]
					\frac{\pFq{4}{3}{-n,\alpha_1+\beta+3n+2,\alpha_1-\alpha_2-n,\alpha_1-\alpha_3-n+1}{
							\alpha_1-\alpha_2+1,		\alpha_1-\alpha_3+1,\alpha_1+\beta+N+2}{1}}{ \alpha_2-\alpha_1+n}&\frac{\pFq{4}{3}{-n,\alpha_1+\beta+3n+1,\alpha_1-\alpha_2-n+1,\alpha_1-\alpha_3-n+1}{
							\alpha_1-\alpha_2+1,		\alpha_1-\alpha_3+1,\alpha_1+\beta+N+2}{1}}{n(\alpha_1+\beta+3n+1)	}\\
					\pFq{4}{3}{-n,\alpha_2+\beta+3n+2,\alpha_2-\alpha_1-n,\alpha_2-\alpha_3-n+1}{
						\alpha_2-\alpha_1+1,		\alpha_2-\alpha_3+1,\alpha_2+\beta+N+2}{1}&\frac{\pFq{4}{3}{-n+1,\alpha_2+\beta+3n+1,\alpha_2-\alpha_1-n,\alpha_2-\alpha_3-n+1}{
							\alpha_2-\alpha_1+1,		\alpha_2-\alpha_3+1,\alpha_2+\beta+N+2}{1}}{\alpha_2+\beta+3n+1}
				\end{vNiceArray}
			}
			{\begin{vNiceArray}{cc}[cell-space-limits=2pt]
					\frac{ \pFq{4}{3}{-n,\alpha_1+\beta+3n+3,\alpha_1-\alpha_2-n,\alpha_1-\alpha_3-n}{
							\alpha_1-\alpha_2+1,		\alpha_1-\alpha_3+1,\alpha_1+\beta+N+2}{1}}{\alpha_3-\alpha_1+n}&\frac{\pFq{4}{3}{-n,\alpha_1+\beta+3n+2,\alpha_1-\alpha_2-n,\alpha_1-\alpha_3-n+1}{
							\alpha_1-\alpha_2+1,		\alpha_1-\alpha_3+1,\alpha_1+\beta+N+2}{1}}{(\alpha_1+\beta+3n+2)	}\\
					\frac{\pFq{4}{3}{-n,\alpha_2+\beta+3n+3,\alpha_2-\alpha_1-n,\alpha_2-\alpha_3-n}{
							\alpha_2-\alpha_1+1,		\alpha_2-\alpha_3+1,\alpha_2+\beta+N+2}{1}}{\alpha_3-\alpha_2+n}&\frac{\pFq{4}{3}{-n,\alpha_2+\beta+3n+2,\alpha_2-\alpha_1-n,\alpha_2-\alpha_3-n+1}{
							\alpha_2-\alpha_1+1,		\alpha_2-\alpha_3+1,\alpha_2+\beta+N+2}{1}}{\alpha_2+\beta+3n+2}
			\end{vNiceArray}},
				\end{multlined}\\
		L_{2, 3n+3}&
=\begin{multlined}[t][.6\textwidth]
			-\tfrac{(N-3n-2)_2 (\beta+3n+2)_2(\alpha_1+\beta+3n+2)(\alpha_1+\beta+4n+6)(\alpha_2+\beta+3n+2)(\alpha_2+\beta+4n+5)_2(\alpha_3+\beta+3n+2)_2(\alpha_3+\beta+4n+5) (\alpha_3-\alpha_1+n)(\alpha_3-\alpha_2+n)}%
			{(N-3n-3)(\beta+3n+4)(\alpha_1+\beta+4n+3)_3(\alpha_2+\beta+3n+4)
				(\alpha_2+\beta+4n+3)_2(\alpha_3+\beta+3n+4)(\alpha_3+\beta+4n+2)_3(\alpha_2-\alpha_1+n+1)}
			\\\times 
			\tfrac{\pFq{4}{3}{-n-1,\alpha_1+\beta+3n+5,\alpha_1-\alpha_2-n-1,\alpha_1-\alpha_3-n}{
					\alpha_1-\alpha_2+1,		\alpha_1-\alpha_3+1,\alpha_1+\beta+N+2}{1}}{\pFq{4}{3}{-n-1,\alpha_1+\beta+3n+4,\alpha_1-\alpha_2-n,\alpha_1-\alpha_3-n}{
					\alpha_1-\alpha_2+1,		\alpha_1-\alpha_3+1,\alpha_1+\beta+N+2}{1}}
			\tfrac{\begin{vNiceArray}{cc}[cell-space-limits=2pt]
					\frac{ \pFq{4}{3}{-n,\alpha_1+\beta+3n+3,\alpha_1-\alpha_2-n,\alpha_1-\alpha_3-n}{
							\alpha_1-\alpha_2+1,		\alpha_1-\alpha_3+1,\alpha_1+\beta+N+2}{1}}{\alpha_3-\alpha_1+n}&\frac{\pFq{4}{3}{-n,\alpha_1+\beta+3n+2,\alpha_1-\alpha_2-n,\alpha_1-\alpha_3-n+1}{
							\alpha_1-\alpha_2+1,		\alpha_1-\alpha_3+1,\alpha_1+\beta+N+2}{1}}{(\alpha_1+\beta+3n+2)	}\\
					\frac{\pFq{4}{3}{-n,\alpha_2+\beta+3n+3,\alpha_2-\alpha_1-n,\alpha_2-\alpha_3-n}{
							\alpha_2-\alpha_1+1,		\alpha_2-\alpha_3+1,\alpha_2+\beta+N+2}{1}}{\alpha_3-\alpha_2+n}&\frac{\pFq{4}{3}{-n,\alpha_2+\beta+3n+2,\alpha_2-\alpha_1-n,\alpha_2-\alpha_3-n+1}{
							\alpha_2-\alpha_1+1,		\alpha_2-\alpha_3+1,\alpha_2+\beta+N+2}{1}}{\alpha_2+\beta+3n+2}
				\end{vNiceArray}
			}
			{\begin{vNiceArray}{cc}[cell-space-limits=2pt]
					\frac{ \pFq{4}{3}{-n-1,\alpha_1+\beta+3n+4,\alpha_1-\alpha_2-n,\alpha_1-\alpha_3-n}{
							\alpha_1-\alpha_2+1,		\alpha_1-\alpha_3+1,\alpha_1+\beta+N+2}{1}}{n+1}&\frac{\pFq{4}{3}{-n,\alpha_1+\beta+3n+3,\alpha_1-\alpha_2-n,\alpha_1-\alpha_3-n}{
							\alpha_1-\alpha_2+1,		\alpha_1-\alpha_3+1,\alpha_1+\beta+N+2}{1}}{\alpha_1+\beta+3n+3	}\\
					\frac{\pFq{4}{3}{-n,\alpha_2+\beta+3n+4,\alpha_2-\alpha_1-n-1,\alpha_2-\alpha_3-n}{
							\alpha_2-\alpha_1+1,		\alpha_2-\alpha_3+1,\alpha_2+\beta+N+2}{1}}{\alpha_1-\alpha_2+n+1}&\frac{\pFq{4}{3}{-n,\alpha_2+\beta+3n+3,\alpha_2-\alpha_1-n,\alpha_2-\alpha_3-n}{
							\alpha_2-\alpha_1+1,		\alpha_2-\alpha_3+1,\alpha_2+\beta+N+2}{1}}{\alpha_2+\beta+3n+3}
			\end{vNiceArray}}
		\end{multlined}
	\end{aligned}\]
	\begin{align*}
			L_{3,3n+1}&=\begin{multlined}[t][.9\textwidth]
		-\tfrac{(N-3n-2)(\beta+3n+3)(\alpha_1+\beta+3n+2)_2(\alpha_2+\beta+3n+2)_2(\alpha_3+\beta+3n+2)_2}{(\alpha_1+\beta+4n+2)_3(\alpha_2+\beta+4n+1)_4(\alpha_3+\beta+4n+1)_4}\\\times \tfrac{(\alpha_1+n+1)(\alpha_1+\beta+N+n+2)(n+1)(\alpha_1-\alpha_2+n+1) (\alpha_1-\alpha_3+n+1)}{(\alpha_1+\beta+4n+1)_5}\\
			\times \tfrac{(\alpha_1+\beta+4n)(\alpha_2+\beta+4n)_4(\alpha_3+\beta+4n-1)_5(\alpha_1+\beta+4n+2)_2}{(N-3n+1)(\alpha_3+n)(\alpha_1+\beta+3n)(\alpha_2+\beta+3n)(\alpha_3+\beta+3n)n(\alpha_2-\alpha_1+n)}\\
			\times \frac{\begin{vNiceArray}{cc}[cell-space-limits=2pt]
					\frac{ \pFq{4}{3}{-n,\alpha_1+\beta+3n+3,\alpha_1-\alpha_2-n,\alpha_1-\alpha_3-n}{
							\alpha_1-\alpha_2+1,		\alpha_1-\alpha_3+1,\alpha_1+\beta+N+2}{1}}{\alpha_3-\alpha_1+n}&\frac{\pFq{4}{3}{-n,\alpha_1+\beta+3n+2,\alpha_1-\alpha_2-n,\alpha_1-\alpha_3-n+1}{
							\alpha_1-\alpha_2+1,		\alpha_1-\alpha_3+1,\alpha_1+\beta+N+2}{1}}{(\alpha_1+\beta+3n+2)	}\\
					\frac{\pFq{4}{3}{-n,\alpha_2+\beta+3n+3,\alpha_2-\alpha_1-n,\alpha_2-\alpha_3-n}{
							\alpha_2-\alpha_1+1,		\alpha_2-\alpha_3+1,\alpha_2+\beta+N+2}{1}}{\alpha_3-\alpha_2+n}&\frac{\pFq{4}{3}{-n,\alpha_2+\beta+3n+2,\alpha_2-\alpha_1-n,\alpha_2-\alpha_3-n+1}{
							\alpha_2-\alpha_1+1,		\alpha_2-\alpha_3+1,\alpha_2+\beta+N+2}{1}}{\alpha_2+\beta+3n+2}
			\end{vNiceArray}}{\begin{vNiceArray}{cc}[cell-space-limits=2pt]
					\frac{\pFq{4}{3}{-n,\alpha_1+\beta+3n+2,\alpha_1-\alpha_2-n,\alpha_1-\alpha_3-n+1}{
							\alpha_1-\alpha_2+1,		\alpha_1-\alpha_3+1,\alpha_1+\beta+N+2}{1}}{ \alpha_2-\alpha_1+n}&\frac{\pFq{4}{3}{-n,\alpha_1+\beta+3n+1,\alpha_1-\alpha_2-n+1,\alpha_1-\alpha_3-n+1}{
							\alpha_1-\alpha_2+1,		\alpha_1-\alpha_3+1,\alpha_1+\beta+N+2}{1}}{n(\alpha_1+\beta+3n+1)	}\\
					\pFq{4}{3}{-n,\alpha_2+\beta+3n+2,\alpha_2-\alpha_1-n,\alpha_2-\alpha_3-n+1}{
						\alpha_2-\alpha_1+1,		\alpha_2-\alpha_3+1,\alpha_2+\beta+N+2}{1}&\frac{\pFq{4}{3}{-n+1,\alpha_2+\beta+3n+1,\alpha_2-\alpha_1-n,\alpha_2-\alpha_3-n+1}{
							\alpha_2-\alpha_1+1,		\alpha_2-\alpha_3+1,\alpha_2+\beta+N+2}{1}}{\alpha_2+\beta+3n+1}
				\end{vNiceArray}
			},\end{multlined} \\
			L_{3,3n+2}&			= \begin{multlined}[t][.9\textwidth]-\tfrac{(N-3n-3)(\beta+3n+4)(\alpha_1+\beta+3n+3)_2(\alpha_2+\beta+3n+3)_2(\alpha_3+\beta+3n+4) (\alpha_2+\beta+N+n+2)}{(N-3n)(\alpha_1+\beta+3n+1)(\alpha_2+\beta+3n+1)(\alpha_3+\beta+3n+1)(\alpha_2+\beta+4n+2)_4}\\
			\times \tfrac{(n+1)(\alpha_2+n+1)(\alpha_2-\alpha_1+n+1)(\alpha_2-\alpha_3+n+1)(\alpha_1+\beta+4n+1)_2 (\alpha_2+\beta+4n+1) (\alpha_3+\beta+4n+1)}{(\alpha_1+n+1)(\alpha_3-\alpha_2+n)(\alpha_3-\alpha_1+n)(\alpha_1+\beta+4n+6)(\alpha_2+\beta+4n+5)_2(\alpha_3+\beta+4n+5)}\\
			\times \tfrac{\begin{vNiceArray}{cc}[cell-space-limits=2pt]
					\frac{ \pFq{4}{3}{-n-1,\alpha_1+\beta+3n+4,\alpha_1-\alpha_2-n,\alpha_1-\alpha_3-n}{
							\alpha_1-\alpha_2+1,		\alpha_1-\alpha_3+1,\alpha_1+\beta+N+2}{1}}{n+1}&\frac{\pFq{4}{3}{-n,\alpha_1+\beta+3n+3,\alpha_1-\alpha_2-n,\alpha_1-\alpha_3-n}{
							\alpha_1-\alpha_2+1,		\alpha_1-\alpha_3+1,\alpha_1+\beta+N+2}{1}}{\alpha_1+\beta+3n+3	}\\
					\frac{\pFq{4}{3}{-n,\alpha_2+\beta+3n+4,\alpha_2-\alpha_1-n-1,\alpha_2-\alpha_3-n}{
							\alpha_2-\alpha_1+1,		\alpha_2-\alpha_3+1,\alpha_2+\beta+N+2}{1}}{\alpha_1-\alpha_2+n+1}&\frac{\pFq{4}{3}{-n,\alpha_2+\beta+3n+3,\alpha_2-\alpha_1-n,\alpha_2-\alpha_3-n}{
							\alpha_2-\alpha_1+1,		\alpha_2-\alpha_3+1,\alpha_2+\beta+N+2}{1}}{\alpha_2+\beta+3n+3}
			\end{vNiceArray}}{\begin{vNiceArray}{cc}[cell-space-limits=2pt]
					\frac{ \pFq{4}{3}{-n,\alpha_1+\beta+3n+3,\alpha_1-\alpha_2-n,\alpha_1-\alpha_3-n}{
							\alpha_1-\alpha_2+1,		\alpha_1-\alpha_3+1,\alpha_1+\beta+N+2}{1}}{\alpha_3-\alpha_1+n}&\frac{\pFq{4}{3}{-n,\alpha_1+\beta+3n+2,\alpha_1-\alpha_2-n,\alpha_1-\alpha_3-n+1}{
							\alpha_1-\alpha_2+1,		\alpha_1-\alpha_3+1,\alpha_1+\beta+N+2}{1}}{(\alpha_1+\beta+3n+2)	}\\
					\frac{\pFq{4}{3}{-n,\alpha_2+\beta+3n+3,\alpha_2-\alpha_1-n,\alpha_2-\alpha_3-n}{
							\alpha_2-\alpha_1+1,		\alpha_2-\alpha_3+1,\alpha_2+\beta+N+2}{1}}{\alpha_3-\alpha_2+n}&\frac{\pFq{4}{3}{-n,\alpha_2+\beta+3n+2,\alpha_2-\alpha_1-n,\alpha_2-\alpha_3-n+1}{
							\alpha_2-\alpha_1+1,		\alpha_2-\alpha_3+1,\alpha_2+\beta+N+2}{1}}{\alpha_2+\beta+3n+2}
				\end{vNiceArray}
			},\end{multlined}\\
			L_{3,3n+3}&= \begin{multlined}[t][.9\textwidth]-\tfrac{(N-3n-4)(\beta+3n+5)(\alpha_1+\beta+3n+4)_2(\alpha_2+\beta+3n+4)_2(\alpha_3+\beta+3n+5) }{(N-3n-1)(\alpha_1+\beta+3n+2)(\alpha_2+\beta+3n+2)(\alpha_3+\beta+3n+2)(\alpha_3+\beta+4n+3)_5}\\
			\times \tfrac{(n+1)(\alpha_3+n+1)(\alpha_3-\alpha_1+n+1)(\alpha_3-\alpha_2+n+1)(\alpha_3+\beta+N+n+2) }{(\alpha_2+n+1)(\alpha_1-\alpha_2+n+1)}\\
			\times \tfrac{(\alpha_1+\beta+4n+3)(\alpha_2+\beta+4n+2)_2(\alpha_3+\beta+4n+2)}{(\alpha_1+\beta+4n+7)(\alpha_2+\beta+4n+7)(\alpha_3+\beta+4n+6)}\\
			\times
			\frac{\begin{vNiceArray}{cc}[cell-space-limits=2pt]
					\frac{\pFq{4}{3}{-n-1,\alpha_1+\beta+3n+5,\alpha_1-\alpha_2-n-1,\alpha_1-\alpha_3-n}{
							\alpha_1-\alpha_2+1,		\alpha_1-\alpha_3+1,\alpha_1+\beta+N+2}{1}}{ \alpha_2-\alpha_1+n+1}&\frac{\pFq{4}{3}{-n-1,\alpha_1+\beta+3n+4,\alpha_1-\alpha_2-n,\alpha_1-\alpha_3-n}{
							\alpha_1-\alpha_2+1,		\alpha_1-\alpha_3+1,\alpha_1+\beta+N+2}{1}}{(n+1)(\alpha_1+\beta+3n+4)	}\\
					\pFq{4}{3}{-n-1,\alpha_2+\beta+3n+5,\alpha_2-\alpha_1-n-1,\alpha_2-\alpha_3-n}{
						\alpha_2-\alpha_1+1,		\alpha_2-\alpha_3+1,\alpha_2+\beta+N+2}{1}&\frac{\pFq{4}{3}{-n,\alpha_2+\beta+3n+4,\alpha_2-\alpha_1-n-1,\alpha_2-\alpha_3-n}{
							\alpha_2-\alpha_1+1,		\alpha_2-\alpha_3+1,\alpha_2+\beta+N+2}{1}}{\alpha_2+\beta+3n+4}
			\end{vNiceArray}}{\begin{vNiceArray}{cc}[cell-space-limits=2pt]
					\frac{ \pFq{4}{3}{-n-1,\alpha_1+\beta+3n+4,\alpha_1-\alpha_2-n,\alpha_1-\alpha_3-n}{
							\alpha_1-\alpha_2+1,		\alpha_1-\alpha_3+1,\alpha_1+\beta+N+2}{1}}{n+1}&\frac{\pFq{4}{3}{-n,\alpha_1+\beta+3n+3,\alpha_1-\alpha_2-n,\alpha_1-\alpha_3-n}{
							\alpha_1-\alpha_2+1,		\alpha_1-\alpha_3+1,\alpha_1+\beta+N+2}{1}}{\alpha_1+\beta+3n+3	}\\
					\frac{\pFq{4}{3}{-n,\alpha_2+\beta+3n+4,\alpha_2-\alpha_1-n-1,\alpha_2-\alpha_3-n}{
							\alpha_2-\alpha_1+1,		\alpha_2-\alpha_3+1,\alpha_2+\beta+N+2}{1}}{\alpha_1-\alpha_2+n+1}&\frac{\pFq{4}{3}{-n,\alpha_2+\beta+3n+3,\alpha_2-\alpha_1-n,\alpha_2-\alpha_3-n}{
							\alpha_2-\alpha_1+1,		\alpha_2-\alpha_3+1,\alpha_2+\beta+N+2}{1}}{\alpha_2+\beta+3n+3}
				\end{vNiceArray}
		}
		\end{multlined}
	\end{align*}
\end{teo}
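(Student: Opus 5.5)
We follow the scheme of the proof of Theorem \ref{th:Hahn_now}, now with $q=1$ and $p=3$, so that $\mathscr T_{N-3}=L_1L_2L_3U_1$ by Theorem \ref{th:Bidiagonal_Factorization}. The entries come from Theorem \ref{th:bidiagonal_Christoffel}:
\[
U_{1,n}=-\frac{B_{n+1}(0)}{B_n(0)},\qquad L_{1,n+1}=-\frac{A^{(1)}_n(0)}{A^{(1)}_{n+1}(0)},
\]
\[
L_{2,n+1}=-\frac{A^{(1)}_{n+2}(0)}{A^{(1)}_{n+1}(0)}\frac{\tau^A_{2,n}}{\tau^A_{2,n+1}},\qquad L_{3,n+1}=-\frac{\tau^A_{2,n+2}\,\tau^A_{3,n}}{\tau^A_{2,n+1}\,\tau^A_{3,n+1}}.
\]
The first step is to write the step-line values $B_{3m+s}(0)$ from \eqref{HahnTypeIIWeighted0}, and $A^{(a)}_{3m+s-1}(0)$ for $a\in\{1,2,3\}$ and $s\in\{0,1,2\}$ from the evaluation of \eqref{HahnTypeI} at $x=0$. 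The latter are explicit Pochhammer prefactors times ${}_4F_3(1)$ series.

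\textbf{The $U_1$ entries.} Since $B_n(0)$ is a pure product, $U_{1,3n+s}$ is a telescoping ratio of Pochhammer symbols. We split into the three residues $s=0,1,2$. In each case the multi-index gains one unit in its $(s+1)$-th component, and we simplify $(\alpha_i+\beta+|\vec n|+1)_{n_i}$ against its shifted version. This yields the stated rational expressions, exactly as in the $p=2$ computation.

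\textbf{The $L_1$ entries.} These are ratios of consecutive $A^{(1)}(0)$. The prefactors reduce to the stated rational factors, and the ${}_4F_3$ series survive as the quotients in the statement. Again we treat the three residues $3n+1$, $3n+2$, $3n+3$ separately.

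\textbf{The $L_2$ entries.} Here we must expand the $2\times2$ determinants $\tau^A_{2,n}$, whose entries are $A^{(1)}$ and $A^{(2)}$ at consecutive step-line indices. We factor out of each row and column the common Pochhammer prefactors: $(N+1-|\vec n|)!$, $(\beta+1)_{|\vec n|-1}$, the products $\prod_k(\alpha_k+\beta+|\vec n|)_{n_k}$, and so on. What remains in each entry is a small rational weight such as $1/n$, $1/(\alpha_1+\beta+3n)$ or $1/(\alpha_1-\alpha_2+n)$, times a ${}_4F_3$. This produces the normalized determinants displayed in the statement. The ratio $A^{(1)}_{n+2}(0)/A^{(1)}_{n+1}(0)$ supplies the extra quotient of single ${}_4F_3$'s.

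\textbf{The $L_3$ entries, and the main obstacle.} We avoid computing $3\times3$ determinants by using \eqref{eq:tau_A-B}, which for $q=1$ reads
\[
\tau^A_{3,n}\propto \frac{B_n(0)}{b^3_3\cdots b^3_{n+2}},
\]
with explicit sign. Hence $\tau^A_{3,n}/\tau^A_{3,n+1}$ is $\pm\, b^3_{n+3}\,B_n(0)/B_{n+1}(0)$. This gives
\[
L_{3,n+1}=\pm\, b^3_{n+3}\,\frac{B_n(0)}{B_{n+1}(0)}\,\frac{\tau^A_{2,n+2}}{\tau^A_{2,n+1}}.
\]
Equivalently, we may use the paper's device $L_{3,\cdot}=b^3_{\cdot}/(L_{1,\cdot}L_{2,\cdot}U_{1,\cdot})$, reading off the lowest subdiagonal of $L_1L_2L_3U_1$.

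We then insert the explicit $b^3_{3n+k}$ from the cyclic formula of \cite{tesis}. With $j=p=3$ the sum collapses to the single term $i=k+1$, so $b^3$ is a pure product.

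The main obstacle is the bookkeeping in the $2\times2$ determinants. Their prefactors differ between rows, since $a=1$ and $a=2$ carry $(\alpha_a+\beta+|\vec n|)_{N+2-|\vec n|}$ and the products $\prod_{k\ne a}(\alpha_k-\alpha_a)_{n_k}$. These must be extracted consistently so that the leftover weights match the displayed ones. We would organize this by writing every $A^{(a)}_{3m+s-1}(0)$ as a common factor depending only on $|\vec n|$, times a row factor depending on $a$, times an entry-specific weight, times ${}_4F_3$. After that, the cancellations in each of the nine residue cases are mechanical and can be checked symbolically.
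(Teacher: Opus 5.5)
Your plan is essentially the paper's own proof. Like the paper, you evaluate the step-line type~II and type~I Hahn polynomials at $x=0$, read off $U_1$, $L_1$, $L_2$ from Theorem~\ref{th:bidiagonal_Christoffel} after pulling the Pochhammer prefactors out of the $2\times 2$ determinants $\tau^A_{2,n}$, and avoid $\tau^A_{3,n}$ via \eqref{eq:tau_A-B} (equivalently the lowest-subdiagonal identity $b^3_{n+3}=L_{1,n+3}L_{2,n+2}L_{3,n+1}U_{1,n}$ with the one-term product form of $b^3$). The one claim you should drop is that this bookkeeping reproduces the displayed entries verbatim: with your formulas you get $U_{1,3n+1}>0$ (since $\operatorname{sgn}B_n(0)=(-1)^n$), $L_{2,n+1}=\frac{1}{L_{1,n+2}}\frac{\tau^A_{2,n}}{\tau^A_{2,n+1}}$ and $L_{3,n+1}=\frac{b^3_{n+3}}{U_{1,n}}\frac{\tau^A_{2,n+2}}{\tau^A_{2,n+1}}$, whereas the paper's proof writes $-\frac{1}{L_{1,n+2}}$ and $U_{1,n-1}$ and labels the determinant built from $A^{(a)}_{3n+1}(0),A^{(a)}_{3n}(0)$ as $\tau^A_{2,3n+1}$ (by definition it is $\tau^A_{2,3n}$), so the displayed sign of $U_{1,3n+1}$ and the $\tau^A_2$-ratios and $U_1$-factors inside the displayed $L_2$, $L_3$ entries are off by a sign or one index, and your computation will yield corrected versions of them.
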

\begin{proof}

From \eqref{HahnTypeIIWeighted0} we get
\[
\begin{aligned}
	B_{3n}(0)
	&=(-1)^{n}\scriptstyle
	\frac{N!}{(N-3n)!}\frac{(\alpha_1+1)_{n}(\alpha_2+1)_{n}(\alpha_3+1)_{n}}{(\alpha_1+\beta+3n+1)_{n}(\alpha_2+\beta+3n+1)_{n}(\alpha_3+\beta+3n+1)_{n}},\\
	B_{3n+1}(0)
&=(-1)^{n+1}\scriptstyle
\frac{N!}{(N-3n-1)!}\frac{(\alpha_1+1)_{n+1}(\alpha_2+1)_{n}(\alpha_3+1)_{n}}{(\alpha_1+\beta+3n+2)_{n+1}(\alpha_2+\beta+3n+2)_{n}(\alpha_3+\beta+3n+2)_{n}},\\
B_{3n+2}(0)
&=(-1)^{n}\scriptstyle
\frac{N!}{(N-3n-2)!}\frac{(\alpha_1+1)_{n+1}(\alpha_2+1)_{n+1}(\alpha_3+1)_{n}}{(\alpha_1+\beta+3n+3)_{n+1}(\alpha_2+\beta+3n+3)_{n+1}(\alpha_3+\beta+3n+3)_{n}},\\
\end{aligned}
\]
and
\begin{align*}
	A_{3n-1}^{(1)}(0)
	&=\scriptstyle
	\frac{(-1)^{n-1}(N+1-3n)!
		({\alpha}_1+\beta+3n)_{n}({\alpha}_2+\beta+3n)_{n}({\alpha}_3+\beta+3n)_{n}}{
	(n-1)!(\beta+1)_{3n-1}(\alpha_1+\beta+3n)_{N+2-3n}	({\alpha}_2-{\alpha}_1)_{{n}}({\alpha}_3-{\alpha}_1)_{{n}}}\pFq{4}{3}{-n+1,\alpha_1+\beta+3n,\alpha_1-\alpha_2-n+1,\alpha_1-\alpha_3-n+1}{
		\alpha_1-\alpha_2+1,		\alpha_1-\alpha_3+1,\alpha_1+\beta+N+2}{1},\\
	A_{3n-1}^{(2)}(0)
&	=\scriptstyle
	\frac{(-1)^{n-1}(N+1-3n)!
		({\alpha}_1+\beta+3n)_{n}({\alpha}_2+\beta+3n)_{n}({\alpha}_3+\beta+3n)_{n}}{
	(n-1)!(\beta+1)_{3n-1}(\alpha_2+\beta+3n)_{N+2-3n}	({\alpha}_1-{\alpha}_2)_{{n}}({\alpha}_3-{\alpha}_2)_{{n}}}\pFq{4}{3}{-n+1,\alpha_2+\beta+3n,\alpha_2-\alpha_1-n+1,\alpha_2-\alpha_3-n+1}{
		\alpha_2-\alpha_1+1,		\alpha_2-\alpha_3+1,\alpha_2+\beta+N+2}{1},\\
	A_{3n-1}^{(3)}(0)
&	=\scriptstyle
	\frac{(-1)^{n-1}(N+1-3n)!
		({\alpha}_1+\beta+3n)_{n}({\alpha}_2+\beta+3n)_{n}({\alpha}_3+\beta+3n)_{n}}{
		(n-1)!(\beta+1)_{3n-1}(\alpha_3+\beta+3n)_{N+2-3n}({\alpha}_1-{\alpha}_3)_{{n}}({\alpha}_2-{\alpha}_3)_{{n}}}\pFq{4}{3}{-n+1,\alpha_3+\beta+3n,\alpha_3-\alpha_1-n+1,\alpha_3-\alpha_2-n+1}{
		\alpha_3-\alpha_1+1,		\alpha_3-\alpha_2+1,\alpha_3+\beta+N+2}{1},\\
	A_{3n}^{(1)}(0)
&	=\scriptstyle
	\frac{(-1)^{n}(N-3n)!
		({\alpha}_1+\beta+3n+1)_{n+1}({\alpha}_2+\beta+3n+1)_{n}({\alpha}_3+\beta+3n+1)_{n}}{
	n!(\beta+1)_{3n}(\alpha_1+\beta+3n+1)_{N+1-3n}	({\alpha}_2-{\alpha}_1)_{{n}}({\alpha}_3-{\alpha}_1)_{{n}}}\pFq{4}{3}{-n,\alpha_1+\beta+3n+1,\alpha_1-\alpha_2-n+1,\alpha_1-\alpha_3-n+1}{
		\alpha_1-\alpha_2+1,		\alpha_1-\alpha_3+1,\alpha_1+\beta+N+2}{1},\\
	A_{3n}^{(2)}(0)
&	=\scriptstyle
	\frac{(-1)^{n}(N-3n)!
		({\alpha}_1+\beta+3n+1)_{n+1}({\alpha}_2+\beta+3n+1)_{n}({\alpha}_3+\beta+3n+1)_{n}}{
		(n-1)!(\beta+1)_{3n}(\alpha_2+\beta+3n+1)_{N+1-3n}({\alpha}_1-{\alpha}_2)_{{n+1}}({\alpha}_3-{\alpha}_2)_{{n}}}\pFq{4}{3}{-n+1,\alpha_2+\beta+3n+1,\alpha_2-\alpha_1-n,\alpha_2-\alpha_3-n+1}{
		\alpha_2-\alpha_1+1,		\alpha_2-\alpha_3+1,\alpha_2+\beta+N+2}{1},\\
	A_{3n}^{(3)}(0)
&	=
\scriptstyle	\frac{(-1)^{n}(N-3n)!
		({\alpha}_1+\beta+3n+1)_{n+1}({\alpha}_2+\beta+3n+1)_{n}({\alpha}_3+\beta+3n+1)_{n}}{
	(n-1)!(\beta+1)_{3n}(\alpha_3+\beta+3n+1)_{N+1-3n}	({\alpha}_1-{\alpha}_3)_{{n+1}}({\alpha}_2-{\alpha}_3)_{{n}}}\pFq{4}{3}{-n+1,\alpha_3+\beta+3n+1,\alpha_3-\alpha_1-n,\alpha_3-\alpha_2-n+1}{
		\alpha_3-\alpha_1+1,		\alpha_3-\alpha_2+1,\alpha_3+\beta+N+2}{1},\\
	A_{3n+1}^{(1)}(0)
&	=\scriptstyle
	\frac{(-1)^{n+1}(N-3n-1)!
		({\alpha}_1+\beta+3n+2)_{n+1}({\alpha}_2+\beta+3n+2)_{n+1}({\alpha}_3+\beta+3n+2)_{n}}{
		n!(\beta+1)_{3n+1}(\alpha_1+\beta+3n+2)_{N-3n}	({\alpha}_2-{\alpha}_1)_{{n+1}}({\alpha}_3-{\alpha}_1)_{{n}}}\pFq{4}{3}{-n,\alpha_1+\beta+3n+2,\alpha_1-\alpha_2-n,\alpha_1-\alpha_3-n+1}{
		\alpha_1-\alpha_2+1,		\alpha_1-\alpha_3+1,\alpha_1+\beta+N+2}{1},\\
	A_{3n+1}^{(2)}(0)
	&=\scriptstyle
	\frac{(-1)^{n+1}(N-3n-1)!
		({\alpha}_1+\beta+3n+2)_{n+1}({\alpha}_2+\beta+3n+2)_{n+1}({\alpha}_3+\beta+3n+2)_{n}}{
		n!(\beta+1)_{3n+1}(\alpha_2+\beta+3n+2)_{N-3n}({\alpha}_1-{\alpha}_2)_{{n+1}}({\alpha}_3-{\alpha}_2)_{{n}}}\pFq{4}{3}{-n,\alpha_2+\beta+3n+2,\alpha_2-\alpha_1-n,\alpha_2-\alpha_3-n+1}{
		\alpha_2-\alpha_1+1,		\alpha_2-\alpha_3+1,\alpha_2+\beta+N+2}{1},\\
	A_{3n+1}^{(3)}(0)
	&=\scriptstyle
	\frac{(-1)^{n+1}(N-3n-1)!
		({\alpha}_1+\beta+3n+2)_{n+1}({\alpha}_2+\beta+3n+2)_{n+1}({\alpha}_3+\beta+3n+2)_{n}}{
		(n-1)!(\beta+1)_{3n+1}(\alpha_3+\beta+3n+2)_{N-3n}	({\alpha}_1-{\alpha}_3)_{{n+1}}({\alpha}_2-{\alpha}_3)_{{n+1}}}\pFq{4}{3}{-n+1,\alpha_3+\beta+3n+2,\alpha_3-\alpha_1-n,\alpha_3-\alpha_2-n}{
		\alpha_3-\alpha_1+1,		\alpha_3-\alpha_2+1,\alpha_3+\beta+N+2}{1}.
\end{align*}
For for \(j\in\{0,1,2,3\}\), the recurrence coefficients are
\[\begin{aligned}
	b_{3m}^j&=\scriptstyle-(\alpha_{1}+m+1)\delta_{j,0}
		+
		\frac{(N-3m+1)_{j}(\beta+3m+1-j)_{j}({\alpha}_{1}+\beta+4m+1-j)}{{\prod}_{l=5-j}^{4}({\alpha}_l+\beta+4m-j)}
		\frac{\prod_{l=1}^3({\alpha}_l+\beta+3m+1-j)_{j}}{\prod_{l=1}^{3}(\alpha_l+\beta+4m+1-j)_{j}}\sum_{i=1}^{4-j}
		\frac{(\alpha_i+m)(\alpha_i+\beta+N+m+1)}{(\alpha_i+\beta+4m-j)_{j+2}}
		\frac{\prod_{l=1}^3(\alpha_i-\alpha_l+m)}{\prod_{l=1,l\neq i}^{4-j}(\alpha_i-{\alpha}_l)}
\\
	b_{3m+1}^j&=\scriptstyle-(\alpha_2+m+1)\delta_{j,0}
		+
		\frac{(N-3m)_{j}(\beta+3m+2-j)_{j}({\alpha}_2+\beta+4m+2-j)}{{\prod}_{l=6-j}^{5}({\alpha}_l+\beta+4m+{1}-j)}
		\frac{\prod_{l=1}^3({\alpha}_l+\beta+3m+2-j)_{j}}{\prod_{l=2}^{4}(\alpha_l+\beta+4m+2-j)_{j}}\sum_{i=2}^{5-j}
		\frac{(\alpha_i+m)(\alpha_i+\beta+N+m+1)}{(\alpha_i+\beta+4m+1-j)_{j+2}}
		\frac{\prod_{l=1}^3(\alpha_i-\alpha_l+m)}{\prod_{l=2,l\neq i}^{5-j}(\alpha_i-{\alpha}_l)},
\\
		b_{3m+2}^j&=\scriptstyle-(\alpha_3+m+1)\delta_{j,0}
		+
		\frac{(N-3m-1)_{j}(\beta+3m+3-j)_{j}({\alpha}_3+\beta+4m+3-j)}{{\prod}_{l=7-j}^{6}({\alpha}_l+\beta+4m+{2}-j)}
		\frac{\prod_{l=1}^3({\alpha}_l+\beta+3m+3-j)_{j}}{\prod_{l=3}^{5}(\alpha_l+\beta+4m+3-j)_{j}}\sum_{i=3}^{6-j}
		\frac{(\alpha_i+m)(\alpha_i+\beta+N+m+1)}{(\alpha_i+\beta+4m+2-j)_{j+2}}
		\frac{\prod_{l=1}^3(\alpha_i-\alpha_l+m)}{\prod_{l=3,l\neq i}^{6-j}(\alpha_i-{\alpha}_l)}.
\end{aligned}\]
In particular
\[\begin{aligned}
	b_{3m}^3&=\scriptstyle
		\frac{(N-3m+1)_{3}(\beta+3m-2)_{3}({\alpha}_{1}+\beta+4m-2)}{{\prod}_{l=2}^{4}({\alpha}_l+\beta+4m-3)}
		\frac{\prod_{l=1}^3({\alpha}_l+\beta+3m-2)_{3}}{\prod_{l=1}^{3}(\alpha_l+\beta+4m-2)_{3}}
		\frac{(\alpha_1+m)(\alpha_1+\beta+N+m+1)}{(\alpha_1+\beta+4m-3)_{5}}
	\prod_{l=1}^3(\alpha_1-\alpha_l+m)
\\
	b_{3m+1}^3&=\scriptstyle
		\frac{(N-3m)_{3}(\beta+3m-1)_{3}({\alpha}_2+\beta+4m-1)}{{\prod}_{l=3}^{5}({\alpha}_l+\beta+4m-2)}
		\frac{\prod_{l=1}^3({\alpha}_l+\beta+3m-1)_{3}}{\prod_{l=2}^{4}(\alpha_l+\beta+4m-1)_{3}}
		\frac{(\alpha_2+m)(\alpha_2+\beta+N+m+1)}{(\alpha_2+\beta+4m-2)_{5}}
		\prod_{l=1}^3(\alpha_2-\alpha_l+m),
\\
		b_{3m+2}^3&=\scriptstyle
		\frac{(N-3m-1)_{3}(\beta+3m)_{3}({\alpha}_3+\beta+4m)}{{\prod}_{l=4}^{6}({\alpha}_l+\beta+4m-1)}
		\frac{\prod_{l=1}^3({\alpha}_l+\beta+3m)_{3}}{\prod_{l=3}^{5}(\alpha_l+\beta+4m)_{3}}
		\frac{(\alpha_3+m)(\alpha_3+\beta+N+m+1)}{(\alpha_3+\beta+4m-1)_{5}}
		\prod_{l=1}^3(\alpha_3-\alpha_l+m).
\end{aligned}\]

The corresponding $\tau$-determinants are given by
\[
\begin{aligned}
	\tau^B_{1,n}&=B_n(0), & n&\in\{0,1,\dots, N-2\},&
	\tau^A_{1,n}&=A^{(1)}_n(0),& n&\in\{0,1,\dots, N-4\},\\
\end{aligned}
\]
and 
\[
\begin{aligned}
	\tau^A_{2,n}&=\begin{vNiceArray}{cc}[cell-space-limits=2pt,small]
		A^{(1)}_{n+1}(0) &A^{(1)}_n(0)\\
		A^{(2)}_{n+1}(0) &A^{(2)}_n(0)
	\end{vNiceArray}, & n&\in\{0,1,\dots, N-5\},&
	\tau^A_{3,n}&=\begin{vNiceArray}{ccc}[cell-space-limits=2pt,small]
		A^{(1)}_{n+2}(0) &	A^{(1)}_{n+1}(0) &A^{(1)}_n(0)\\
		A^{(2)}_{n+2}(0) &	A^{(2)}_{n+1}(0) &A^{(2)}_n(0)\\
		A^{(3)}_{n+2}(0) &	A^{(3)}_{n+1}(0) &A^{(3)}_n(0)
	\end{vNiceArray}, & n&\in\{0,1,\dots, N-6\}.
\end{aligned}
\]
Thus, following Theorem \ref{th:bidiagonal_Christoffel} we get
\[ \begin{aligned}
	U_{1,n}&=-\frac{B_{n+1}(0)}{B_{n}(0)},   &n&\in\{0,1,\dots,N-2\},&
	L_{1,n+1}&=-\frac{A^{(1)}_{n}(0)}{A^{(1)}_{n+1}(0)}, &n&\in\{0,1,\dots,N-4\},
\end{aligned}\]
and 
\[ \begin{aligned}
\scriptstyle	L_{2,n+1}&=-\scriptstyle\frac{A^{(1)}_{n+2}(0)}{A^{(1)}_{n+1}(0)}\frac{\begin{vNiceArray}{cc}[cell-space-limits=2pt,small]
			A^{(1)}_{n+1}(0) &A^{(1)}_n(0)\\
			A^{(2)}_{n+1}(0) &A^{(2)}_n(0)
	\end{vNiceArray}}{\begin{vNiceArray}{cc}[cell-space-limits=2pt,small]
			A^{(1)}_{n+2}(0) &A^{(1)}_{n+1}(0)\\
			A^{(2)}_{n+2}(0) &A^{(2)}_{n+1}(0)
	\end{vNiceArray}}&&\scriptstyle n\in\{0,1,\dots,N-5\},&
\scriptstyle	L_{3,n+1}&=-\frac{\begin{vNiceArray}{cc}[cell-space-limits=2pt,small]
			A^{(1)}_{n+3}(0) &A^{(1)}_{n+2}(0)\\
			A^{(2)}_{n+3}(0) &A^{(2)}_{n+2}(0)
	\end{vNiceArray}}{\begin{vNiceArray}{cc}[cell-space-limits=2pt,small]
			A^{(1)}_{n+2}(0) &A^{(1)}_{n+1}(0)\\
			A^{(2)}_{n+2}(0) &A^{(2)}_{n+1}(0)
	\end{vNiceArray}}\frac{\begin{vNiceArray}{ccc}[cell-space-limits=2pt,small]
			A^{(1)}_{n+2}(0) &	A^{(1)}_{n+1}(0) &A^{(1)}_n(0)\\
			A^{(2)}_{n+2}(0) &	A^{(2)}_{n+1}(0) &A^{(2)}_n(0)\\
			A^{(3)}_{n+2}(0) &	A^{(3)}_{n+1}(0) &A^{(3)}_n(0)
	\end{vNiceArray}}{\begin{vNiceArray}{ccc}[cell-space-limits=2pt,small]
			A^{(1)}_{n+3}(0) &	A^{(1)}_{n+2}(0) &A^{(1)}_{n+1}(0)\\
			A^{(2)}_{n+3}(0) &	A^{(2)}_{n+2}(0) &A^{(2)}_{n+1}(0)\\
			A^{(3)}_{n+3}(0) &	A^{(3)}_{n+2}(0) &A^{(3)}_{n+1}(0)
	\end{vNiceArray}},  &&\scriptstyle n\in\{0,1,\dots,N-6\}.
\end{aligned}\]
In particular
\[ \begin{aligned}
	L_{3,n+1}&=-b^3_{n+3}\frac{\begin{vNiceArray}{cc}[cell-space-limits=2pt]
			A^{(1)}_{n+3}(0) &A^{(1)}_{n+2}(0)\\
			A^{(2)}_{n+3}(0) &A^{(2)}_{n+2}(0)
	\end{vNiceArray}}{\begin{vNiceArray}{cc}[cell-space-limits=2pt]
			A^{(1)}_{n+2}(0) &A^{(1)}_{n+1}(0)\\
			A^{(2)}_{n+2}(0) &A^{(2)}_{n+1}(0)
	\end{vNiceArray}}\frac{B_{n}(0)}{B_{n+1}(0)},  &n\in\{0,1,\dots,N-6\}.
\end{aligned}\]

The expressions for the $U$'s are obtained form 
\[\begin{aligned}
	U_{1,3n}&=-\frac{B_{3n+1}(0)}{B_{3n}(0)}, &U_{1,3n+1}&=-\frac{B_{3n+2}(0)}{B_{3n+1}(0)}, & U_{1,3n+2}&=-\frac{B_{3n+3}(0)}{B_{3n+2}(0)}.
\end{aligned}
\]
For the $L_1$'s we obtain:
\begin{multline*}
	L_{1,3n+1}=-\frac{A^{(1)}_{3n}(0)}{A^{(1)}_{3n+1}(0)}=-\frac{\frac{(-1)^{n}(N-3n)!
			({\alpha}_1+\beta+3n+1)_{n+1}({\alpha}_2+\beta+3n+1)_{n}({\alpha}_3+\beta+3n+1)_{n}}{
			n!(\beta+1)_{3n}(\alpha_1+\beta+3n+1)_{N+1-3n}	({\alpha}_2-{\alpha}_1)_{{n}}({\alpha}_3-{\alpha}_1)_{{n}}}}{\frac{(-1)^{n+1}(N-3n-1)!
			({\alpha}_1+\beta+3n+2)_{n+1}({\alpha}_2+\beta+3n+2)_{n+1}({\alpha}_3+\beta+3n+2)_{n}}{
			n!(\beta+1)_{3n+1}(\alpha_1+\beta+3n+2)_{N-3n}	({\alpha}_2-{\alpha}_1)_{{n+1}}({\alpha}_3-{\alpha}_1)_{{n}}}}\\\times 
		\frac{\pFq{4}{3}{-n,\alpha_1+\beta+3n+1,\alpha_1-\alpha_2-n+1,\alpha_1-\alpha_3-n+1}{
				\alpha_1-\alpha_2+1,		\alpha_1-\alpha_3+1,\alpha_1+\beta+N+2}{1}}{\pFq{4}{3}{-n,\alpha_1+\beta+3n+2,\alpha_1-\alpha_2-n,\alpha_1-\alpha_3-n+1}{
				\alpha_1-\alpha_2+1,		\alpha_1-\alpha_3+1,\alpha_1+\beta+N+2}{1}}
\end{multline*}
\begin{multline*}
	L_{1,3n+2}=-\frac{A^{(1)}_{3n+1}(0)}{A^{(1)}_{3n+2}(0)}=-\frac{	\frac{(-1)^{n+1}(N-3n-1)!
			({\alpha}_1+\beta+3n+2)_{n+1}({\alpha}_2+\beta+3n+2)_{n+1}({\alpha}_3+\beta+3n+2)_{n}}{
			n!(\beta+1)_{3n+1}(\alpha_1+\beta+3n+2)_{N-3n}	({\alpha}_2-{\alpha}_1)_{{n+1}}({\alpha}_3-{\alpha}_1)_{{n}}}}{	\frac{(-1)^{n}(N+1-3(n+1))!
			({\alpha}_1+\beta+3(n+1))_{n+1}({\alpha}_2+\beta+3(n+1))_{n+1}({\alpha}_3+\beta+3(n+1))_{n+1}}{
			n!(\beta+1)_{3(n+1)-1}(\alpha_1+\beta+3(n+1))_{N+2-3(n+1)}	({\alpha}_2-{\alpha}_1)_{{n+1}}({\alpha}_3-{\alpha}_1)_{{n+1}}}}\\\times
		\frac{\pFq{4}{3}{-n,\alpha_1+\beta+3n+2,\alpha_1-\alpha_2-n,\alpha_1-\alpha_3-n+1}{
				\alpha_1-\alpha_2+1,		\alpha_1-\alpha_3+1,\alpha_1+\beta+N+2}{1}}{\pFq{4}{3}{-(n+1)+1,\alpha_1+\beta+3(n+1),\alpha_1-\alpha_2-(n+1)+1,\alpha_1-\alpha_3-(n+1)+1}{
				\alpha_1-\alpha_2+1,		\alpha_1-\alpha_3+1,\alpha_1+\beta+N+2}{1}}
\end{multline*}
\begin{multline*}
	L_{1,3n+3}=-\frac{A^{(1)}_{3n+2}(0)}{A^{(1)}_{3n+3}(0)}=-\frac{\frac{(-1)^{n}(N+1-3(n+1))!
			({\alpha}_1+\beta+3(n+1))_{n+1}({\alpha}_2+\beta+3(n+1))_{n+1}({\alpha}_3+\beta+3(n+1))_{n+1}}{
			n!(\beta+1)_{3(n+1)-1}(\alpha_1+\beta+3(n+1))_{N+2-3(n+1)}	({\alpha}_2-{\alpha}_1)_{{n+1}}({\alpha}_3-{\alpha}_1)_{{n+1}}}}{\frac{(-1)^{n+1}(N-3(n+1))!
			({\alpha}_1+\beta+3(n+1)+1)_{n+2}({\alpha}_2+\beta+3(n+1)+1)_{n+1}({\alpha}_3+\beta+3(n+1)+1)_{n+1}}{
			(n+1)!(\beta+1)_{3(n+1)}(\alpha_1+\beta+3(n+1)+1)_{N+1-3(n+1)}	({\alpha}_2-{\alpha}_1)_{{n+1}}({\alpha}_3-{\alpha}_1)_{{n+1}}}}\\
		\times\frac{\pFq{4}{3}{-(n+1)+1,\alpha_1+\beta+3(n+1),\alpha_1-\alpha_2-(n+1)+1,\alpha_1-\alpha_3-(n+1)+1}{
				\alpha_1-\alpha_2+1,		\alpha_1-\alpha_3+1,\alpha_1+\beta+N+2}{1}}{\pFq{4}{3}{-(n+1),\alpha_1+\beta+3(n+1)+1,\alpha_1-\alpha_2-(n+1)+1,\alpha_1-\alpha_3-(n+1)+1}{
				\alpha_1-\alpha_2+1,		\alpha_1-\alpha_3+1,\alpha_1+\beta+N+2}{1}}
\end{multline*}
That simplifies to  the exspressions for the $L_1$'s in the theorem.

Now, we need to deal with $2\times 2$ determinants
\[
\begin{aligned}
	\tau^A_{2,3n\textcolor{black}{+1}}&=\begin{vNiceArray}{cc}[cell-space-limits=2pt]
	A^{(1)}_{3n+1}(0) &A^{(1)}_{3n}(0)\\
	A^{(2)}_{3n+1}(0) &A^{(2)}_{3n}(0)
\end{vNiceArray},&	\tau^A_{2,3n+2}&=\begin{vNiceArray}{cc}[cell-space-limits=2pt]
A^{(1)}_{3n+2}(0) &A^{(1)}_{3n+1}(0)\\
A^{(2)}_{3n+2}(0) &A^{(2)}_{3n+1}(0)
\end{vNiceArray},&	\tau^A_{2,3n+3}&=\begin{vNiceArray}{cc}[cell-space-limits=2pt]
A^{(1)}_{3n+3}(0) &A^{(1)}_{3n+2}(0)\\
A^{(2)}_{3n+3}(0) &A^{(2)}_{3n+2}(0)
\end{vNiceArray},
\end{aligned}
\]
Hence, 
\begin{multline*}
	\tau^A_{2,3n\textcolor{black}{+1}}=\tfrac{(-1)^{n}(N-3n)!
		({\alpha}_1+\beta+3n+1)_{n+1}({\alpha}_2+\beta+3n+1)_{n}({\alpha}_3+\beta+3n+1)_{n}}{
		(n-1)!(\beta+1)_{3n}}\\\tfrac{(-1)^{n+1}(N-3n-1)!
		({\alpha}_1+\beta+3n+2)_{n+1}({\alpha}_2+\beta+3n+2)_{n+1}({\alpha}_3+\beta+3n+2)_{n}}{
		n!(\beta+1)_{3n+1}}\\\begin{vNiceArray}{cc}[cell-space-limits=2pt]
		\frac{ \pFq{4}{3}{-n,\alpha_1+\beta+3n+2,\alpha_1-\alpha_2-n,\alpha_1-\alpha_3-n+1}{
				\alpha_1-\alpha_2+1,		\alpha_1-\alpha_3+1,\alpha_1+\beta+N+2}{1}}{(\alpha_1+\beta+3n+2)_{N-3n}	({\alpha}_2-{\alpha}_1)_{{n+1}}({\alpha}_3-{\alpha}_1)_{{n}}}&\frac{\pFq{4}{3}{-n,\alpha_1+\beta+3n+1,\alpha_1-\alpha_2-n+1,\alpha_1-\alpha_3-n+1}{
				\alpha_1-\alpha_2+1,		\alpha_1-\alpha_3+1,\alpha_1+\beta+N+2}{1}}{n(\alpha_1+\beta+3n+1)_{N+1-3n}	({\alpha}_2-{\alpha}_1)_{{n}}({\alpha}_3-{\alpha}_1)_{{n}}}\\
	\frac{ \pFq{4}{3}{-n,\alpha_2+\beta+3n+2,\alpha_2-\alpha_1-n,\alpha_2-\alpha_3-n+1}{
			\alpha_2-\alpha_1+1,		\alpha_2-\alpha_3+1,\alpha_2+\beta+N+2}{1}}{(\alpha_2+\beta+3n+2)_{N-3n}({\alpha}_1-{\alpha}_2)_{{n+1}}({\alpha}_3-{\alpha}_2)_{{n}}}&\frac{\pFq{4}{3}{-n+1,\alpha_2+\beta+3n+1,\alpha_2-\alpha_1-n,\alpha_2-\alpha_3-n+1}{
				\alpha_2-\alpha_1+1,		\alpha_2-\alpha_3+1,\alpha_2+\beta+N+2}{1}}{(\alpha_2+\beta+3n+1)_{N+1-3n}({\alpha}_1-{\alpha}_2)_{{n +1}}({\alpha}_3-{\alpha}_2)_{{n}}}
		\end{vNiceArray}\\=-\tfrac{(N-3n)!
		({\alpha}_1+\beta+3n+1)_{n+1}({\alpha}_2+\beta+3n+1)_{n}({\alpha}_3+\beta+3n+1)_{n}}{
		(n-1)!(\beta+1)_{3n}({\alpha}_2-{\alpha}_1)_{{\textcolor{black}{  n } }}({\alpha}_3-{\alpha}_1)_{{n}}(\alpha_1+\beta+3n+2)_{N-3n}}\\\tfrac{(N-3n-1)!
		({\alpha}_1+\beta+3n+2)_{n+1}({\alpha}_2+\beta+3n+2)_{n+1}({\alpha}_3+\beta+3n+2)_{n}}{
		n!(\beta+1)_{3n+1}({\alpha}_1-{\alpha}_2)_{{\textcolor{black}{  n+1 }  }}({\alpha}_3-{\alpha}_2)_{{n}}(\alpha_2+\beta+3n+2)_{N-3n}}\\\begin{vNiceArray}{cc}[cell-space-limits=2pt]
	 \frac{\pFq{4}{3}{-n,\alpha_1+\beta+3n+2,\alpha_1-\alpha_2-n,\alpha_1-\alpha_3-n+1}{
				\alpha_1-\alpha_2+1,		\alpha_1-\alpha_3+1,\alpha_1+\beta+N+2}{1}}{ \alpha_2-\alpha_1+n}&\frac{\pFq{4}{3}{-n,\alpha_1+\beta+3n+1,\alpha_1-\alpha_2-n+1,\alpha_1-\alpha_3-n+1}{
				\alpha_1-\alpha_2+1,		\alpha_1-\alpha_3+1,\alpha_1+\beta+N+2}{1}}{n(\alpha_1+\beta+3n+1)	}\\
		\pFq{4}{3}{-n,\alpha_2+\beta+3n+2,\alpha_2-\alpha_1-n,\alpha_2-\alpha_3-n+1}{
				\alpha_2-\alpha_1+1,		\alpha_2-\alpha_3+1,\alpha_2+\beta+N+2}{1}&\frac{\pFq{4}{3}{-n+1,\alpha_2+\beta+3n+1,\alpha_2-\alpha_1-n,\alpha_2-\alpha_3-n+1}{
				\alpha_2-\alpha_1+1,		\alpha_2-\alpha_3+1,\alpha_2+\beta+N+2}{1}}{\alpha_2+\beta+3n+1}
				\end{vNiceArray}
\end{multline*}

\begin{multline*}
	\tau^A_{2,3n+2}=-\tfrac{(N-3n-2)!
		({\alpha}_1+\beta+3n+3)_{n+1}({\alpha}_2+\beta+3n+3)_{n+1}({\alpha}_3+\beta+3n+3)_{n+1}}{
		(n)!(\beta+1)_{3n+2} ({\alpha}_1+\beta+3n+3)_{N-3n-1} (\alpha_2-\alpha_1)_{n+1}(\alpha_3-\alpha_1)_n}\\
		\tfrac{(N-3n-1)!
		({\alpha}_1+\beta+3n+2)_{n+1}({\alpha}_2+\beta+3n+2)_{n+1}({\alpha}_3+\beta+3n+2)_{n}}{
		n!(\beta+1)_{3n+1}({\alpha}_2+\beta+3n+3)_{N-3n-1} (\alpha_1-\alpha_2)_{n+1}(\alpha_3-\alpha_2)_n}
		\\\begin{vNiceArray}{cc}[cell-space-limits=2pt]
	\frac{ \pFq{4}{3}{-n,\alpha_1+\beta+3n+3,\alpha_1-\alpha_2-n,\alpha_1-\alpha_3-n}{
				\alpha_1-\alpha_2+1,		\alpha_1-\alpha_3+1,\alpha_1+\beta+N+2}{1}}{\alpha_3-\alpha_1+n}&\frac{\pFq{4}{3}{-n,\alpha_1+\beta+3n+2,\alpha_1-\alpha_2-n,\alpha_1-\alpha_3-n+1}{
				\alpha_1-\alpha_2+1,		\alpha_1-\alpha_3+1,\alpha_1+\beta+N+2}{1}}{(\alpha_1+\beta+3n+2)	}\\
		\frac{\pFq{4}{3}{-n,\alpha_2+\beta+3n+3,\alpha_2-\alpha_1-n,\alpha_2-\alpha_3-n}{
				\alpha_2-\alpha_1+1,		\alpha_2-\alpha_3+1,\alpha_2+\beta+N+2}{1}}{\alpha_3-\alpha_2+n}&\frac{\pFq{4}{3}{-n,\alpha_2+\beta+3n+2,\alpha_2-\alpha_1-n,\alpha_2-\alpha_3-n+1}{
				\alpha_2-\alpha_1+1,		\alpha_2-\alpha_3+1,\alpha_2+\beta+N+2}{1}}{\alpha_2+\beta+3n+2}
				\end{vNiceArray}
\end{multline*}
\begin{multline*}
	\tau^A_{2,3n}=-\tfrac{(N-3n)!
		({\alpha}_1+\beta+3n+1)_{n+1}({\alpha}_2+\beta+3n+1)_{n}({\alpha}_3+\beta+3n+1)_{n}}{
		(n-1)!(\beta+1)_{3n} ({\alpha}_1+\beta+3n+1)_{N-3n+1} (\alpha_2-\alpha_1)_{n}(\alpha_3-\alpha_1)_{n}}\\
		\tfrac{(N-3n+1)!
		({\alpha}_1+\beta+3n)_{n}({\alpha}_2+\beta+3n)_{n}({\alpha}_3+\beta+3n)_{n}}{
		(n-1)!(\beta+1)_{3n-1}({\alpha}_2+\beta+3n+1)_{N-3n+1} (\alpha_1-\alpha_2)_{n}(\alpha_3-\alpha_2)_{n}}
		\\\begin{vNiceArray}{cc}[cell-space-limits=2pt]
	\frac{ \pFq{4}{3}{-n,\alpha_1+\beta+3n+1,\alpha_1-\alpha_2-n+1,\alpha_1-\alpha_3-n+1}{
				\alpha_1-\alpha_2+1,		\alpha_1-\alpha_3+1,\alpha_1+\beta+N+2}{1}}{n}&\frac{\pFq{4}{3}{-n+1,\alpha_1+\beta+3n,\alpha_1-\alpha_2-n+1,\alpha_1-\alpha_3-n+1}{
				\alpha_1-\alpha_2+1,		\alpha_1-\alpha_3+1,\alpha_1+\beta+N+2}{1}}{\alpha_1+\beta+3n	}\\
		\frac{\pFq{4}{3}{-n+1,\alpha_2+\beta+3n+1,\alpha_2-\alpha_1-n,\alpha_2-\alpha_3-n+1}{
				\alpha_2-\alpha_1+1,		\alpha_2-\alpha_3+1,\alpha_2+\beta+N+2}{1}}{\alpha_1-\alpha_2+n}&\frac{\pFq{4}{3}{-n+1,\alpha_2+\beta+3n,\alpha_2-\alpha_1-n+1,\alpha_2-\alpha_3-n+1}{
				\alpha_2-\alpha_1+1,		\alpha_2-\alpha_3+1,\alpha_2+\beta+N+2}{1}}{\alpha_2+\beta+3n}
				\end{vNiceArray}
\end{multline*}
\begin{multline*}
	\tau^A_{2,3n+3}=-\tfrac{(N-3n-3)!
		({\alpha}_1+\beta+3n+4)_{n+2}({\alpha}_2+\beta+3n+4)_{n+1}({\alpha}_3+\beta+3n+4)_{n+1}}{
		(n)!(\beta+1)_{3n+3} ({\alpha}_1+\beta+3n+4)_{N-3n-2} (\alpha_2-\alpha_1)_{n+1}(\alpha_3-\alpha_1)_{n+1}}\\
		\tfrac{(N-3n-2)!
		({\alpha}_1+\beta+3n+3)_{n+1}({\alpha}_2+\beta+3n+3)_{n+1}({\alpha}_3+\beta+3n+3)_{n+1}}{
		n!(\beta+1)_{3n+2}({\alpha}_2+\beta+3n+4)_{N-3n-2} (\alpha_1-\alpha_2)_{n+1}(\alpha_3-\alpha_2)_{n+1}}
		\\\begin{vNiceArray}{cc}[cell-space-limits=2pt]
	\tfrac{ \pFq{4}{3}{-n-1,\alpha_1+\beta+3n+4,\alpha_1-\alpha_2-n,\alpha_1-\alpha_3-n}{
				\alpha_1-\alpha_2+1,		\alpha_1-\alpha_3+1,\alpha_1+\beta+N+2}{1}}{n+1}&\frac{\pFq{4}{3}{-n,\alpha_1+\beta+3n+3,\alpha_1-\alpha_2-n,\alpha_1-\alpha_3-n}{
				\alpha_1-\alpha_2+1,		\alpha_1-\alpha_3+1,\alpha_1+\beta+N+2}{1}}{\alpha_1+\beta+3n+3	}\\
		\tfrac{\pFq{4}{3}{-n,\alpha_2+\beta+3n+4,\alpha_2-\alpha_1-n-1,\alpha_2-\alpha_3-n}{
				\alpha_2-\alpha_1+1,		\alpha_2-\alpha_3+1,\alpha_2+\beta+N+2}{1}}{\alpha_1-\alpha_2+n+1}&\frac{\pFq{4}{3}{-n,\alpha_2+\beta+3n+3,\alpha_2-\alpha_1-n,\alpha_2-\alpha_3-n}{
				\alpha_2-\alpha_1+1,		\alpha_2-\alpha_3+1,\alpha_2+\beta+N+2}{1}}{\alpha_2+\beta+3n+3}
				\end{vNiceArray}
\end{multline*}
\begin{multline*}
\frac{\tau^A_{2,3n}}{\tau^A_{2,3n+1}}=\tfrac{ n (N-3n)_2 (\beta+3n)_2 (\alpha_1+\beta+3n)(\alpha_2+\beta+3n)(\alpha_3+\beta+3n)_2 (\alpha_1-\alpha_2+n)}{(\alpha_1+\beta+4n)_3(\alpha_2+\beta+4n)_3(\alpha_3+\beta+4n)_2}\\
\times 
 \tfrac{\begin{vNiceArray}{cc}[cell-space-limits=2pt]
	\frac{ \pFq{4}{3}{-n,\alpha_1+\beta+3n+1,\alpha_1-\alpha_2-n+1,\alpha_1-\alpha_3-n+1}{
				\alpha_1-\alpha_2+1,		\alpha_1-\alpha_3+1,\alpha_1+\beta+N+2}{1}}{n}&\frac{\pFq{4}{3}{-n+1,\alpha_1+\beta+3n,\alpha_1-\alpha_2-n+1,\alpha_1-\alpha_3-n+1}{
				\alpha_1-\alpha_2+1,		\alpha_1-\alpha_3+1,\alpha_1+\beta+N+2}{1}}{\alpha_1+\beta+3n	}\\
		\frac{\pFq{4}{3}{-n+1,\alpha_2+\beta+3n+1,\alpha_2-\alpha_1-n,\alpha_2-\alpha_3-n+1}{
				\alpha_2-\alpha_1+1,		\alpha_2-\alpha_3+1,\alpha_2+\beta+N+2}{1}}{\alpha_1-\alpha_2+n}&\frac{\pFq{4}{3}{-n+1,\alpha_2+\beta+3n,\alpha_2-\alpha_1-n+1,\alpha_2-\alpha_3-n+1}{
				\alpha_2-\alpha_1+1,		\alpha_2-\alpha_3+1,\alpha_2+\beta+N+2}{1}}{\alpha_2+\beta+3n}
				\end{vNiceArray}
}{\begin{vNiceArray}{cc}[cell-space-limits=2pt]
	 \frac{\pFq{4}{3}{-n,\alpha_1+\beta+3n+2,\alpha_1-\alpha_2-n,\alpha_1-\alpha_3-n+1}{
				\alpha_1-\alpha_2+1,		\alpha_1-\alpha_3+1,\alpha_1+\beta+N+2}{1}}{ \alpha_2-\alpha_1+n}&\frac{\pFq{4}{3}{-n,\alpha_1+\beta+3n+1,\alpha_1-\alpha_2-n+1,\alpha_1-\alpha_3-n+1}{
				\alpha_1-\alpha_2+1,		\alpha_1-\alpha_3+1,\alpha_1+\beta+N+2}{1}}{n(\alpha_1+\beta+3n+1)	}\\
		\pFq{4}{3}{-n,\alpha_2+\beta+3n+2,\alpha_2-\alpha_1-n,\alpha_2-\alpha_3-n+1}{
				\alpha_2-\alpha_1+1,		\alpha_2-\alpha_3+1,\alpha_2+\beta+N+2}{1}&\frac{\pFq{4}{3}{-n+1,\alpha_2+\beta+3n+1,\alpha_2-\alpha_1-n,\alpha_2-\alpha_3-n+1}{
				\alpha_2-\alpha_1+1,		\alpha_2-\alpha_3+1,\alpha_2+\beta+N+2}{1}}{\alpha_2+\beta+3n+1}
				\end{vNiceArray}}
\end{multline*}
\begin{multline*}
\frac{\tau^A_{2,3n+1}}{\tau^A_{2,3n+2}} =\tfrac{ n (N-3n-1)_2 (\beta+1+3n)_2 (\alpha_1+\beta+3n+1)(\alpha_2+\beta+3n+1)(\alpha_3+\beta+3n+1)_2}{(\alpha_1+\beta+4n+2)_2(\alpha_2+\beta+4n+1)_3(\alpha_3+\beta+4n+1)_3}\\
\times 
(\alpha_2-\alpha_1+n)
\frac{\begin{vNiceArray}{cc}[cell-space-limits=2pt]
	 \frac{\pFq{4}{3}{-n,\alpha_1+\beta+3n+2,\alpha_1-\alpha_2-n,\alpha_1-\alpha_3-n+1}{
				\alpha_1-\alpha_2+1,		\alpha_1-\alpha_3+1,\alpha_1+\beta+N+2}{1}}{ \alpha_2-\alpha_1+n}&\frac{\pFq{4}{3}{-n,\alpha_1+\beta+3n+1,\alpha_1-\alpha_2-n+1,\alpha_1-\alpha_3-n+1}{
				\alpha_1-\alpha_2+1,		\alpha_1-\alpha_3+1,\alpha_1+\beta+N+2}{1}}{n(\alpha_1+\beta+3n+1)	}\\
		\pFq{4}{3}{-n,\alpha_2+\beta+3n+2,\alpha_2-\alpha_1-n,\alpha_2-\alpha_3-n+1}{
				\alpha_2-\alpha_1+1,		\alpha_2-\alpha_3+1,\alpha_2+\beta+N+2}{1}&\frac{\pFq{4}{3}{-n+1,\alpha_2+\beta+3n+1,\alpha_2-\alpha_1-n,\alpha_2-\alpha_3-n+1}{
				\alpha_2-\alpha_1+1,		\alpha_2-\alpha_3+1,\alpha_2+\beta+N+2}{1}}{\alpha_2+\beta+3n+1}
				\end{vNiceArray}
}
{\begin{vNiceArray}{cc}[cell-space-limits=2pt]
	\frac{ \pFq{4}{3}{-n,\alpha_1+\beta+3n+3,\alpha_1-\alpha_2-n,\alpha_1-\alpha_3-n}{
				\alpha_1-\alpha_2+1,		\alpha_1-\alpha_3+1,\alpha_1+\beta+N+2}{1}}{\alpha_3-\alpha_1+n}&\frac{\pFq{4}{3}{-n,\alpha_1+\beta+3n+2,\alpha_1-\alpha_2-n,\alpha_1-\alpha_3-n+1}{
				\alpha_1-\alpha_2+1,		\alpha_1-\alpha_3+1,\alpha_1+\beta+N+2}{1}}{(\alpha_1+\beta+3n+2)	}\\
		\frac{\pFq{4}{3}{-n,\alpha_2+\beta+3n+3,\alpha_2-\alpha_1-n,\alpha_2-\alpha_3-n}{
				\alpha_2-\alpha_1+1,		\alpha_2-\alpha_3+1,\alpha_2+\beta+N+2}{1}}{\alpha_3-\alpha_2+n}&\frac{\pFq{4}{3}{-n,\alpha_2+\beta+3n+2,\alpha_2-\alpha_1-n,\alpha_2-\alpha_3-n+1}{
				\alpha_2-\alpha_1+1,		\alpha_2-\alpha_3+1,\alpha_2+\beta+N+2}{1}}{\alpha_2+\beta+3n+2}
				\end{vNiceArray}}
\end{multline*}
\begin{multline*}
\frac{\tau^A_{2,3n+2}}{\tau^A_{2,3n+3}} =\tfrac{ (N-3n-2)_2 (\beta+3n+2)_2 (\alpha_1+\beta+3n+2)(\alpha_2+\beta+3n+2)(\alpha_3+\beta+3n+2)_2}{(\alpha_1+\beta+4n+3)_3(\alpha_2+\beta+4n+3)_2(\alpha_3+\beta+4n+2)_3}\\
\times
(\alpha_3-\alpha_1+n)(\alpha_3-\alpha_2+n)
\frac{\begin{vNiceArray}{cc}[cell-space-limits=2pt]
	\frac{ \pFq{4}{3}{-n,\alpha_1+\beta+3n+3,\alpha_1-\alpha_2-n,\alpha_1-\alpha_3-n}{
				\alpha_1-\alpha_2+1,		\alpha_1-\alpha_3+1,\alpha_1+\beta+N+2}{1}}{\alpha_3-\alpha_1+n}&\frac{\pFq{4}{3}{-n,\alpha_1+\beta+3n+2,\alpha_1-\alpha_2-n,\alpha_1-\alpha_3-n+1}{
				\alpha_1-\alpha_2+1,		\alpha_1-\alpha_3+1,\alpha_1+\beta+N+2}{1}}{(\alpha_1+\beta+3n+2)	}\\
		\frac{\pFq{4}{3}{-n,\alpha_2+\beta+3n+3,\alpha_2-\alpha_1-n,\alpha_2-\alpha_3-n}{
				\alpha_2-\alpha_1+1,		\alpha_2-\alpha_3+1,\alpha_2+\beta+N+2}{1}}{\alpha_3-\alpha_2+n}&\frac{\pFq{4}{3}{-n,\alpha_2+\beta+3n+2,\alpha_2-\alpha_1-n,\alpha_2-\alpha_3-n+1}{
				\alpha_2-\alpha_1+1,		\alpha_2-\alpha_3+1,\alpha_2+\beta+N+2}{1}}{\alpha_2+\beta+3n+2}
				\end{vNiceArray}
}
{\begin{vNiceArray}{cc}[cell-space-limits=2pt]
	\frac{ \pFq{4}{3}{-n-1,\alpha_1+\beta+3n+4,\alpha_1-\alpha_2-n,\alpha_1-\alpha_3-n}{
				\alpha_1-\alpha_2+1,		\alpha_1-\alpha_3+1,\alpha_1+\beta+N+2}{1}}{n+1}&\frac{\pFq{4}{3}{-n,\alpha_1+\beta+3n+3,\alpha_1-\alpha_2-n,\alpha_1-\alpha_3-n}{
				\alpha_1-\alpha_2+1,		\alpha_1-\alpha_3+1,\alpha_1+\beta+N+2}{1}}{\alpha_1+\beta+3n+3	}\\
		\frac{\pFq{4}{3}{-n,\alpha_2+\beta+3n+4,\alpha_2-\alpha_1-n-1,\alpha_2-\alpha_3-n}{
				\alpha_2-\alpha_1+1,		\alpha_2-\alpha_3+1,\alpha_2+\beta+N+2}{1}}{\alpha_1-\alpha_2+n+1}&\frac{\pFq{4}{3}{-n,\alpha_2+\beta+3n+3,\alpha_2-\alpha_1-n,\alpha_2-\alpha_3-n}{
				\alpha_2-\alpha_1+1,		\alpha_2-\alpha_3+1,\alpha_2+\beta+N+2}{1}}{\alpha_2+\beta+3n+3}
				\end{vNiceArray}}
\end{multline*}
Now we can write the other lower bidiagonal entries as follows. Note that
\[
\begin{aligned}
	L_{1,n+1}&=-\frac{A^{(1)}_{n}(0)}{A^{(1)}_{n+1}(0)}, &n\in\{0,1,\dots,N-4\},\\
L_{2,n+1}&=-\frac{1}{	L_{1,n+2}}\frac{\tau^A_{2,n}}{\tau^A_{2,n+1}}&n\in\{0,1,\dots,N-5\}\\
L_{3,n+1}&=-\frac{b^3_{n+3}}{L_{2,n+2}L_{1,n+3}U_{1,n-1}}=-\frac{b^3_{n+3}}{U_{1,n-1}}\frac{\tau^A_{2,n+2}}{\tau^A_{2,n+1}}&n\in\{0,1,\dots,N-6\}.
\end{aligned}\]
Hence, using 
\[\begin{aligned}
	L_{2, 3n+1}&=-\frac{1}{	L_{1,3n+2}}\frac{\tau^A_{2,3n}}{\tau^A_{2,3n+1}}, &
	L_{2, 3n+2}&=-\frac{1}{	L_{1,3n+3}}\frac{\tau^A_{2,3n+1}}{\tau^A_{2,3n+2}}, &
	L_{2, 3n+3}&=-\frac{1}{	L_{1,3n+4}}\frac{\tau^A_{2,3n+2}}{\tau^A_{2,3n+3}}
\end{aligned}\]
we obtain the $L_2$'s. Finally, from
\[
\begin{aligned}
	L_{3,3n+1}&=-\frac{b^3_{3n+3}}{U_{1,3n-1}}\frac{\tau^A_{2,3n+2}}{\tau^A_{2,3n+1}}, &
	L_{3,3n+2}&=-\frac{b^3_{3n+4}}{U_{1,3n}}\frac{\tau^A_{2,3n+3}}{\tau^A_{2,3n+2}}, &
	L_{3,3n+3}&=-\frac{b^3_{3n+5}}{U_{1,3n+1}}\frac{\tau^A_{2,3n+4}}{\tau^A_{2,3n+3}}.
\end{aligned}\]
we obtain the $L_3$'s. 
\end{proof}

\section*{Declarations}

\subsection*{Competing interests}
The authors declare that they have no competing interests.

\subsection*{Author contributions}
All authors contributed to the conception of the work, the development of the theoretical results, and the writing of the manuscript. All authors read and approved the final manuscript.

\subsection*{Use of artificial intelligence}
Artificial intelligence tools were used solely to improve the presentation (e.g.\ language and readability). All AI-assisted edits were reviewed and validated by the authors, who remain fully responsible for the content of the manuscript.

\subsection*{Data availability}
No datasets were generated or analysed during the current study.

\subsection*{Code availability}
No code was used or generated for the current study.

\subsection*{Ethics approval}
Not applicable.

\subsection*{Consent to participate}
Not applicable.

\subsection*{Consent for publication}
Not applicable.

\section*{Acknowledgments}

AB was financially supported by the Funda\c c\~ao para a Ci\^encia e a Tecnologia (Portuguese Foundation for Science and Technology) under the scope of the projects
\hyperref{https://doi.org/10.54499/UID/00324/2025}{}{}{\texttt{UID/00324/2025}}
(Centre for Mathematics of the University of Coimbra).

AF acknowledges CIDMA Center for Research and Development in Mathematics and Applications (University of Aveiro) and the Portuguese Foundation for Science and Technology (FCT) within project 
\hyperref{https://doi.org/10.54499/UID/04106/2025}{}{}{\texttt{UID/04106/2025}}.

MM acknowledges Spanish ``Agencia Estatal de Investigación'' research projects [PID2021- 122154NB-I00], \emph{Ortogonalidad y Aproximación con Aplicaciones en Machine Learning y Teoría de la Probabilidad} and [PID2024-155133NB-I00], \emph{Ortogonalidad, aproximación e integrabilidad: aplicaciones en procesos estocásticos clásicos y cuánticos}.

\end{document}